\newcommand{\BigFig}[1]{\parbox{12pt}{\Huge #1}}
\newcommand{\BigZero}{\BigFig{0}}
\numberwithin{equation}{section}
\newtheorem{theorem}{Theorem}[section]
\newtheorem{proposition}{Proposition}[section] 
\newtheorem{lemma}[proposition]{Lemma}
\newtheorem{corollary}[proposition]{Corollary}
\newtheorem{remark}[proposition]{Remark}
\newcommand*{\C}{\mathbb{C}}
\newcommand*{\R}{\mathbb{R}}
\newcommand*{\Z}{\mathbb{Z}}
\newcommand{\comment}[1]{}
\title[Interpretation of the Schur--Cohn test]%
      {Interpretation of the Schur--Cohn test \\ in terms of canonical systems} 
\author[M. Suzuki]{Masatoshi Suzuki}
\subjclass[2000]{34A55, 30C15}
\keywords{Schur--Cohn test, canonical systems, inverse problem}
\begin{abstract}
We solve direct and inverse problems 
for two-dimensional (quasi) canonical systems 
related to exponential polynomials of a specific but sufficiently general type. 
The approach to the inverse problem in this paper provides an interpretation of 
the matrices and their determinants in the classical Schur-Cohn test 
for polynomials in terms of Hamiltonians of canonical systems.
\end{abstract}
\begin{document}

\section{Introduction}

This paper generalizes the results in \cite{Su18} by considering 
a finite-dimensional or discretized version of the theory of 
quasi-canonical systems in \cite{Su20a, Su20b}, 
but is presented in an almost self-contained fashion. 

The subject of this paper is direct and inverse problems of quasi-canonical systems, 
but we begin by stating the relation 
with the classical Schur--Cohn test obtained from the main results, 
because it may be of interest to readers in a wider field. 
On this account, we review the Schur--Cohn test 
originate from Schur~\cite{Schur17, Schur18} and Cohn \cite{Cohn22}. 
Let $f(x)=a_dx^d+a_{d-1}x^{d-1}+\cdots+a_1x+a_0$ 
be a complex polynomial of degree $d$. 
Using the triangular matrices 
\[
M_n(f) :=
\begin{bmatrix}
a_d & a_{d-1} & \cdots & a_{d-n+1} \\
     & a_d & \cdots & a_{d-n+2} \\
     &      & \ddots & \vdots \\ 
     &      &           & a_d
\end{bmatrix}, 
\quad 
N_n(f) := 
\begin{bmatrix}
a_0 & a_1 & \cdots & a_{n-1} \\
     & a_0 & \cdots & a_{n-2} \\
     &      & \ddots & \vdots \\ 
     &      &           & a_0
\end{bmatrix}, 
\]
we define the matrices
\begin{equation} \label{eq101}
L_n^\pm(f) 
:= 
\begin{bmatrix}
{}^{\rm t} M_n(f) & \pm {}^{\rm t} \overline{N_n(f)} \\
\pm N_n(f) & \overline{M_n(f)} 
\end{bmatrix}, 
\end{equation}
and denote their determinants as 
\begin{equation*} 
D_n(f) := \det L_n^\pm(f)  
\end{equation*}
for $1 \leq n \leq d$, 
where the bar means taking the complex conjugate of each entry.   
Also define $D_0(f)=1$ for convenience. 
We find that $\det L_n^+(f) = \det L_n^-(f)$ 
by multiplying each of the $(n+1)$th to $(2n)$th columns of $\det L_n^-(f)$ by $-1$ 
and then multiplying each of the $(n+1)$th to $(2n)$th rows by $-1$. 
Furthermore, $D_n(f)$ are real numbers,  
because we find that $\det L_n^+(f) = \det \overline{L_n^+(f)}$ 
by interchanging the $k$th column and the $(k+n)$th column of $\det\left[{}^{t}\left(\overline{L_n^+(f)}\right)\right]$ 
for $1\leq k\leq n$, and then interchanging the $k$th row and the $(k+n)$th row for for $1\leq k\leq n$. 

The Schur--Cohn test associates the sign changes of $D_n(f)$ with the distribution of the roots of $f$. 
Suppose that the determinants $D_n(f)$ are all different from zero 
and that the number of sign changes 
in the sequence $(D_0(f), D_1(f),\dots, D_d(f))$ is $q$. 
Then $f$ has no roots on the unit circle $\mathbb{T}=\{z \in \C~:~|z|= 1\}$ 
and $d-q$ roots inside $\mathbb{T}$ counting multiplicities. 
In particular, all roots of $f$ are inside $\mathbb{T}$ 
if and only if $D_n(f) >0$ for all $n$ 
(\cite[Corollaries 11.5.14 and 11.5.15]{RaSch02}). 
For the history and related results on the Schur--Cohn test, 
see Rahman--Schmeisser \cite[\S11.5 and pp.395--396]{RaSch02} 
or Marden \cite[\S43]{Marden66}, for example. 
English translations of \cite{Schur17} and \cite{Schur18} 
are found in \cite[pp. 31--60]{Go86} and \cite[pp. 61--88]{Go86}, respectively. 

To explain an interpretation of $D_n(f)$ in terms of quasi-canonical systems, 
we introduce the exponential polynomial
\begin{equation} \label{eq102}
E_f(z) = e^{irdz/2} f(e^{-irz}), 
\end{equation}
where $r=1$ if $d$ is even and $r=2$ if $d$ is odd. 
If all roots of $f$ are inside $\mathbb{T}$, 
the exponential polynomial 
$E_f$ belongs to the Hermite--Biehler class $\mathbb{HB}$, 
which is the class of all entire functions satisfying the inequality 
\begin{equation*} 
|E^\sharp(z)| < |E(z)| \quad \text{for all $z \in \C_+$} 
\end{equation*}
and having no real zeros, where $\C_+=\{z \in \C : \Im(z) > 0\}$. 
Then, de Branges' inverse theorem  
in the theory of canonical systems asserts that 
there exists a positive semi-definite quadratic real symmetric matrix-valued function $H_f$ 
defined on a subinterval $[t_0,t_1)$ of the real line  
such that a solution $(A(t,z),B(t,z))$ of the canonical system 
\begin{equation} \label{eq103}
-\frac{d}{dt}
\begin{bmatrix}
A(t,z) \\ B(t,z)
\end{bmatrix}
= z 
\begin{bmatrix}
0 & -1 \\ 1 & 0
\end{bmatrix}
H(t)
\begin{bmatrix}
A(t,z) \\ B(t,z)
\end{bmatrix} \quad (z \in \C)
\end{equation} 
for $H=H_f$ satisfying the boundary condition 
\begin{equation*}
\lim_{t \to t_1}\frac{\overline{A(t,z)}B(t,w)-\overline{B(t,z)}A(t,w)}{\pi(w-\bar{z})} = 0
\end{equation*} 
recovers the original $E_f$ as $E_f(z)=A(t_0,z)-iB(t_0,z)$ 
(\cite[Theorem 40]{deBranges68}). 

In \cite{Su18}, we studied a method to construct $H_f$ 
for a class of polynomials with real coefficient, 
since de Branges' inverse theorem guarantees the existence of $H_f$, 
but does not provide information about its concrete form. 
(Note that de Branges proved the inverse theorem 
by constructing the Hamiltonian of a canonical system 
in the case of polynomial function $E(z)$, 
but the above $E_f(z)$ is not a polynomial.) 
By generalizing a method in \cite{Su18} according to \cite{Su20b}, 
we present an explicit way to construct $H_f$ 
for many polynomials with complex coefficient.
As a result, we find that $H_f$ is a locally constant function of the form: 
\begin{equation*}
H_f(t) = \frac{1}{D_{n-1}(f)D_n(f)}\, \widetilde{H}_{f,n} 
\quad \text{for} \quad r(n-1)/2 \leq t < rn/2, \quad 1 \leq n \leq d, 
\end{equation*} 
where $\widetilde{H}_{f,n}$ are some positive definite matrices. 
In particular, $H_f$ is positive definite 
if all roots of $f$ are inside $\mathbb{T}$ by the Schur--Cohn test. 
This is consistent with the fact that 
a matrix-valued function $H$ obtained by de Branges' inverse theorem 
from a function of $\mathbb{HB}$ 
takes values in a set of semi-positive definite quadratic real symmetric matrices. 
Furthermore, the above method of constructing $H_f$ works 
even if $E_f$ does not belong to $\mathbb{HB}$ 
if at least $f$ and $f^\sharp=x^d\,\overline{f(1/x)}$ have no common roots, 
in which case the sign change of $H_f$ describes the distribution of the roots of $f$ 
by the Schur--Cohn test.  
This interpretation of $H_f$ by the classical result is 
what was expected in \cite[\S7.5]{Su18}. 
As the converse of the above, that is, 
by solving a direct problem of quasi-canonical systems, 
we obtain a polynomial $f$ having a specified number of roots inside $\mathbb{T}$ 
from an appropriately chosen locally constant matrix valued function $H$ 
taking values in ${\rm Sym}_2(\R) \cap {\rm SL}_2(\R)$. 
\medskip

By associating the Schur--Cohn test with 
the theory of quasi-canonical systems as described above, 
we find a correspondence between 
the set of all polynomials $f$ of degree $d$ 
with $n$ roots in $\mathbb{T}$ and $D_d(f)\not=0$ 
and the set of all sequence $(H_1,\dots,H_d)$ of $H_i \in {\rm Sym}_2(\R) \cap {\rm SL}_2(\R)$ 
in which the number of sign changes of the traces is $d-n$: 
\begin{equation*}
f~\overset{\text{inverse problem}}{\underset{\text{direct problem}}{\rightleftarrows}}~(H_1,\dots,H_d). 
\end{equation*} 
This is rigorously stated 
as a one-to-one correspondence 
by using the main theorems (Theorems \ref{thm_01}, \ref{thm_02}, and \ref{thm_03}) 
stated below and by arranging the settings appropriately. 
\medskip

To state the main results precisely, we explain the notion of quasi-canonical systems. 
Let $H(t)$ be a quadratic real symmetric matrix-valued function 
defined on a finite interval $I=[t_0,t_1)$. 
We refer to the first-order system of differential equations \eqref{eq103} 
on $I$ parametrized by $z \in \C$ as a {\it quasi-canonical system} (on $I$) 
as well as \cite{Su18} (but, as a difference, 
we deal with the additive variable $t$ instead of a multiplicative variable, 
and do not specify the condition at the right end of the interval $I$ 
when using the word). 
A column vector-valued function 
${}^{\rm t}[A(\cdot,z)\,\,B(\cdot,z)]:I \to \C^{2\times 1}$ is called a {\it solution} 
if it consists of absolutely continuous functions and satisfies \eqref{eq103} 
almost everywhere on $I$ for every fixed $z \in \C$. 
A quasi-canonical system \eqref{eq103} is called a {\it canonical system} 
if $H(t)$ is a real positive semi-definite symmetric matrix for almost all $t$, 
$H \not\equiv 0$ on any subset of $I$ with positive Lebesgue measure, 
and $H$ is locally integrable on $I$ with respect to the Lebesgue measure $dt$. 
The matrix-valued function $H$ is called a {\it Hamiltonian} of a canonical system.  
Abusing language, if it causes no confusion, 
we often call $H$ a Hamiltonian 
if a quasi-canonical system \eqref{eq103} is not a canonical system. 
\medskip

Let $d$ be a positive integer and set 
\begin{equation} \label{eq104}
(L,r):=
\begin{cases}
(d/2,1) & \text{if $d$ is even}, \\
(d,2) & \text{if $d$ is odd}. 
\end{cases}
\end{equation}
Then $2L=rd$. 
For a sequence $\mathcal{C}$ of complex numbers of length $d+1$ indexed as  
\begin{equation} \label{eq105}
\mathcal{C}= 
(C_{L},C_{L-r},C_{L-2r},\cdots,C_{-L}) \in \C^{d+1}
\quad \text{with} \quad C_LC_{-L} \not=0, 
\end{equation}
we consider the exponential polynomial
\begin{equation} \label{eq106}
E(z):=E_\mathcal{C}(z):=\sum_{j=0}^{d}C_{L-rj} e^{i(L-rj)z}
\end{equation}
along with associated functions 
\begin{equation} \label{eq107}
A(z) := A_\mathcal{C}(z) := \frac{1}{2}(E_\mathcal{C}(z)+E_\mathcal{C}^\sharp(z)), 
\quad 
B(z) := B_\mathcal{C}(z) := \frac{i}{2}(E_\mathcal{C}(z)-E_\mathcal{C}^\sharp(z)). 
\end{equation}
We also consider the polynomial 
\begin{equation*} 
f_{\mathcal{C}}(T):= \sum_{j=0}^{d} C_{-(L-rj)} \, T^{d-j} ~\in \C[T]
\end{equation*}
and denote related matrices and their determinants as 
\begin{equation} \label{eq108}
L_n^\pm({\mathcal{C}}):=L_n^\pm(f_{\mathcal{C}}), \quad 
D_n({\mathcal{C}}):=D_n(f_{\mathcal{C}}). 
\end{equation}

An exponential polynomial $E_\mathcal{C}$ of the form in \eqref{eq106} 
belongs to $\mathbb{HB}$ 
if and only if it has no zeros in the closed upper half-plane 
$\overline{\C_+}=\{z \in \C : \Im(z) \geq 0\}$ 
(\cite[Chapter VII, Theorem 6]{Levin80}).  
The latter is equivalent to the fact that $f_\mathcal{C}$ 
has no roots in the closed unit disk $\overline{\mathbb{D}}=\{z \in \C:|z|\leq 1\}$, 
since $E_\mathcal{C}$ and $f_\mathcal{C}$ 
are related as \eqref{eq102},  
$E_\mathcal{C}(z)=e^{iLz}f_\mathcal{C}(e^{-irz})$, 
by definition. 

If $E_\mathcal{C}$ belongs to $\mathbb{HB}$, 
there exists a Hamiltonian of a canonical system 
corresponding to $E_\mathcal{C}$ 
in the sense of de Branges' inverse theorem. 
In the following, we describe an explicit method for associating a Hamiltonian of a quasi-canonical system with exponential polynomial $E_\mathcal{C}$, which does not necessarily belong to $\mathbb{HB}$.
\medskip

For every $1 \leq n \leq d$, 
using the solutions of linear equations
\begin{equation} \label{eq109}
L_n^\pm({\mathcal{C}})
\begin{bmatrix}
z_n^\pm(1) \\
z_n^\pm(2) \\
\vdots \\
z_n^\pm(n) \\
\overline{z_n^\pm(n)} \\
\overline{z_n^\pm(n-1)} \\
\vdots \\
\overline{z_n^\pm(1)}
\end{bmatrix}
= \mp 
\begin{bmatrix}
0 \\
\vdots \\
0 \\
2\overline{C_{L}} \\
2C_{L} \\
0 \\
\vdots \\
0 
\end{bmatrix}
\end{equation}
for unknowns $z_n^\pm(1)\,\dots,z_n^\pm(n)$, 
where $2\overline{C_{L}}$ and $2C_{L}$ are $n$th and $(n+1)$th entries, respectively, 
we define a quadratic real symmetric matrix $H_n=H_n(\mathcal{C})$ by 
\begin{equation} \label{eq110}
\aligned 
\, & 
\begin{bmatrix} 
\Re(z_{n}^+(1)) & 
\Im(z_{n}^+(1)) \\
-\Im(z_{n}^-(1)) & 
\Re(z_{n}^-(1))
\end{bmatrix}
\cdots
\begin{bmatrix} 
\Re(z_{1}^+(1)) & 
\Im(z_{1}^+(1)) \\
-\Im(z_{1}^-(1)) & 
\Re(z_{1}^-(1))
\end{bmatrix} 
\begin{bmatrix}
0 & -1 \\ 1 & 0
\end{bmatrix} 
H_n \\
& \quad  =
\begin{bmatrix}
0 & 1 \\ -1 & 0
\end{bmatrix} 
\begin{bmatrix} 
\Re(z_{n}^+(1)) & 
\Im(z_{n}^+(1)) \\
-\Im(z_{n}^-(1)) & 
\Re(z_{n}^-(1))
\end{bmatrix}
\cdots 
\begin{bmatrix} 
\Re(z_{1}^+(1)) & 
\Im(z_{1}^+(1)) \\
-\Im(z_{1}^-(1)) & 
\Re(z_{1}^-(1))
\end{bmatrix}.
\endaligned 
\end{equation}
Then, we obtain the following results for the inverse problem 
of quasi-canonical system associated with 
exponential polynomials of the form \eqref{eq106}. 

\begin{theorem} \label{thm_01} 
Let $\mathcal{C}$ be a sequence of complex numbers of length $d+1$ as in \eqref{eq105} 
and let $(L,r)$ be as in \eqref{eq104}. 
Let $E=E_\mathcal{C}$ be the exponential polynomial defined by \eqref{eq106}. 
Suppose that $D_d(\mathcal{C})\not=0$. 
Then, 
\begin{enumerate}
\item matrices $H_n=H_n(\mathcal{C})$ of \eqref{eq110} are well-defined for all $1 \leq n \leq d$; 
\item the pair of functions $(A(t,z),B(t,z))$ defined in \eqref{eq210} below satisfies 
a quasi-canonical system \eqref{eq103}
associated with $H(t)$ defined by 
\begin{equation} \label{eq111}
H(t) = H_\mathcal{C}(t) := H_n \quad \text{for} \quad r(n-1)/2 \leq t < rn/2
\end{equation}
on the interval $t \in [0,L)$ together with the boundary conditions 
\begin{equation} \label{eq112}
\aligned 
\begin{bmatrix}
A(0,z) \\ B(0,z)
\end{bmatrix}
=
\begin{bmatrix}
A(z) \\ B(z)
\end{bmatrix},  \quad 
\lim_{t \to  L}
\begin{bmatrix}
A(t,z) \\ B(t,z)
\end{bmatrix}
= 
\begin{bmatrix}
A(0) \\ B(0)
\end{bmatrix},
\endaligned 
\end{equation}
where $A(z)$ and $B(z)$ are the functions in \eqref{eq107}; 
\item functions $A(t,z)$ and $B(t,z)$ have the forms 
\begin{equation} \label{eq113}
\aligned 
A(t,z)
&= \frac{1}{2} \sum_{j=0}^{d-n}\Bigl[\, 
a_n(L-rj)\,e^{i(L-rj-t)z} + \overline{a_n(L-rj)}\,e^{-i(L-rj-t)z} \,\Bigr], \\
B(t,z)
&= \frac{1}{2} \sum_{j=0}^{d-n}\Bigl[\,
b_n(L-rj)\,e^{i(L-rj-t)z}
 + \overline{b_n(L-rj)}\,e^{-i(L-rj-t)z} \,\Bigr], \\
\endaligned 
\end{equation}
if $r(n-1)/2 \leq t < rn/2$ and $1 \leq n \leq d$, where 
$a_{n}(k)$ and $b_{n}(k)$ 
are explicit complex numbers depending only on $\{H_n\}_{1 \leq n \leq d}$. 
\item there exist positive definite quadratic real symmetric matrices $\widetilde{H}_n$ 
such that 
\begin{equation} \label{eq114}
H_n = \frac{1}{D_{n-1}({\mathcal{C}})D_n({\mathcal{C}})} \, 
\widetilde{H}_n 
\end{equation}
holds for all $1 \leq n \leq d$.  
In particular, the positivity of $H_{\mathcal{C}}(t)$ is equivalent to 
that of $D_n(\mathcal{C})$ for all $1 \leq n \leq d$.
\end{enumerate}
\end{theorem}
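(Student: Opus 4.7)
The plan is a joint induction on $n$ that constructs the coefficient sequences $a_n(k), b_n(k)$ appearing in \eqref{eq113} and the Hamiltonians $H_n$ of \eqref{eq110} simultaneously. At the base $n=0$, setting $a_0(L-rj) := C_{L-rj}$ and $b_0(L-rj) := iC_{L-rj}$ makes \eqref{eq113} reduce at $t=0$ to $(A(z), B(z))$ of \eqref{eq107}, establishing the left boundary condition in \eqref{eq112}. For the inductive step, I would extend $(A(t,z), B(t,z))$ from $[0, r(n-1)/2)$ across $t = r(n-1)/2$ by solving the constant-coefficient ODE \eqref{eq103} with $H \equiv H_n$ on $[r(n-1)/2, rn/2)$, demanding that the extension still has the structure \eqref{eq113} but with one fewer exponential frequency in its sum.

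This structural demand imposes two sets of conditions: a matching condition at $t = r(n-1)/2$, which yields linear recursions relating $a_n(\cdot), b_n(\cdot)$ to $a_{n-1}(\cdot), b_{n-1}(\cdot)$; and a cancellation condition forcing the top frequencies $e^{\pm i(L - r(n-1))z}$, present at stage $n-1$ but absent at stage $n$, to vanish identically. Unwinding the cancellation condition using \eqref{eq106} produces precisely the linear system \eqref{eq109}: the Toeplitz block structure of $L_n^\pm(\mathcal{C})$ expresses a convolution of $\mathcal{C}$ against the coefficients of the transfer, while the right-hand side carrying $\pm 2\overline{C_L}, \pm 2 C_L$ in the middle two positions is the residual from the top frequency. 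Claim (i) then reduces to showing that this right-hand side lies in the column space of $L_n^\pm(\mathcal{C})$ and that the $H_n$ produced by \eqref{eq110} is real symmetric; the latter should follow from the $\pm$ symmetry built into the definition of $L_n^\pm$ combined with the centrosymmetry of \eqref{eq109}, which couples $z_n^+$ and $z_n^-$ in a way that forces the bracketed product to be of the form $J H_n$ with $H_n \in \mathrm{Sym}_2(\R)$. Claim (iii) holds by construction, and claim (ii) is the aggregate of the initial condition, the ODE on each open subinterval (where $H$ is constant and $(A,B)$ is a two-frequency exponential-polynomial solution), matching at the breakpoints, and a right boundary condition at $t=L$ that telescopes \eqref{eq110} against the initial data $(A(0), B(0))$.

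For claim (iv), I would apply Cramer's rule to \eqref{eq109}. Since the right-hand side of \eqref{eq109} has only two nonzero entries, $z_n^\pm(1)$ is expressible as $N_n^\pm / D_n(\mathcal{C})$ for certain cofactor-type numerators. Exploiting the persymmetric structure of $L_n^\pm$ should identify $N_n^\pm$ as $\pm 2\overline{C_L}$ times $D_{n-1}(\mathcal{C})$ up to a factor depending only on $\mathcal{C}$; substituting into \eqref{eq110} and tracking the determinants through the matrix product then yields the factorization \eqref{eq114}. The positive definiteness of $\widetilde{H}_n$ should come from an explicit realization of $\widetilde{H}_n$ as a Gramian-type matrix, or equivalently via a Schur-complement identity expressing it as a positive-definite compression of the Hermitian matrix $L_n^+(\mathcal{C})$.

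The main obstacle I foresee is twofold. First, establishing well-definedness in (i) under the sole hypothesis $D_d(\mathcal{C}) \neq 0$: when $D_n(\mathcal{C}) = 0$ for some intermediate $n$, the Cramer expression for $z_n^\pm(1)$ is singular, and one must either argue that the apparent singularities cancel at the level of \eqref{eq110} or reinterpret \eqref{eq109} as a solvability condition on the augmented matrix. Second, and more substantively, proving positive definiteness of $\widetilde{H}_n$: mere nonvanishing of the Schur--Cohn determinants is insufficient, one needs a genuinely positive quadratic form. Here the Hermitian symmetry of $L_n^+(\mathcal{C})$ manifest in \eqref{eq101} must play a decisive role, producing $\widetilde{H}_n$ as the Gramian of an explicit pair of vectors naturally attached to the block structure of $L_n^+(\mathcal{C})$.
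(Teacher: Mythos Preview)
Your plan differs substantially from the paper's proof. The paper does not build $(A,B)$ by forward induction on $n$. Instead, for each $0 \le n \le d$ independently, it solves an operator equation $(\mathsf{I}\pm\Theta\mathsf{J}\mathsf{P}_n)\phi_n^\pm = X(0)\mp\Theta Y(0)$ on an auxiliary Hilbert space, where $\Theta$ is multiplication by $E_0^\sharp/E_0$ and $\mathsf{P}_n$ is a finite-rank projection; invertibility comes from a compactness/spectral argument (Lemma~\ref{lem302}), and finite linear systems of the shape \eqref{eq109} appear only afterward as coefficient comparisons. From $\phi_n^\pm$ one defines $A_n^*(t,z),B_n^*(t,z)$, proves the uncoupled system $-\tfrac{d}{dt} A_n^*=zB_n^*$, $-\tfrac{d}{dt} B_n^*=-zA_n^*$ (Proposition~\ref{prop309}), and establishes a connection formula ${}^{\rm t}[A_{n+1}^*~B_{n+1}^*]=P_{n+1}^*\,{}^{\rm t}[A_n^*~B_n^*]$ at each breakpoint (Proposition~\ref{prop310}); the Hamiltonians are then $H_n=JP_nJ^{-1}P_n^{-1}$ with $P_n=(P_1^*)^{-1}\cdots(P_n^*)^{-1}$. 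The paper does give an inductive construction in Section~\ref{section_7}, but it is presented as a computational alternative \emph{after} Theorem~\ref{thm_01} has been established by the operator method, not as an independent proof.

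Two of your technical expectations do not match how the argument actually runs. First, the link between the ``cancellation of the top frequency'' and \eqref{eq109} is looser than you suggest: in the paper, \eqref{eq109} emerges (as \eqref{eq317}) from comparing a block of $n{+}1$ coefficients in the operator equation at $t=rn/2$, alongside a companion system \eqref{eq316} for the stage-$n$ data; the existence of the real $2\times2$ connection matrix $P_{n+1}^*$ requires a separate argument (equations \eqref{eq318}--\eqref{eq319}) using both systems, and does not fall out of a bare cancellation constraint. Second, for claim (iv), positive definiteness of $\widetilde H_n$ is not obtained as a Gramian or Schur complement of $L_n^+(\mathcal{C})$. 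Writing $P_n=\bigl[\begin{smallmatrix}a&b\\c&d\end{smallmatrix}\bigr]$, the paper computes $H_n=-(\det P_n)^{-1}\bigl[\begin{smallmatrix}c^2+d^2&-(ac+bd)\\-(ac+bd)&a^2+b^2\end{smallmatrix}\bigr]$ and checks positivity of the bracketed matrix by a direct eigenvalue calculation. The scalar $(\det P_n)^{-1}$ is then identified with a constant multiple of $(D_{n-1}D_n)^{-1}$ not by a single Cramer step but by a telescoping product: each factor $|u_k(0)-1|^2-|v_k(k-1)|^2$ equals $4|C_L|^4\,D_{k-2}/D_k$ via a nontrivial determinantal identity (Lemma~\ref{lem311}, essentially a Sylvester-type relation $D_{n-1}^2-|\Delta_{n-1}|^2=D_{n-2}D_n$), and the product over $k\le n$ collapses. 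Your conjectured one-step identification $N_n^\pm\sim\pm 2\overline{C_L}\,D_{n-1}$ does not match this mechanism.
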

\begin{remark} \label{rem_01} 
If $D_d(\mathcal{C})\not=0$, $E_\mathcal{C}$ has no real zeros 
(Lemma \ref{lem304}), especially $E_\mathcal{C}(0)\not=0$. 
Therefore, we can normalize as $E_\mathcal{C}(0)=1$ 
or equivalent $(A_\mathcal{C}(0),B_\mathcal{C}(0))=(1,0)$  
by multiplying it by an appropriate constant.
\end{remark}
We mention another way of constructing $(H(t),A(t,z),B(t,z))$ in Section \ref{section_7}. 
\medskip

As mentioned above, $E_\mathcal{C}$ of \eqref{eq106} belongs to $\mathbb{HB}$  
if and only if $f_\mathcal{C}$ has no zeros in $\overline{\mathbb{D}}$. 
The latter is equivalent that $D_n(\mathcal{C})$ 
are positive for all $1 \leq n \leq d$ by Schur--Cohn test. 
Therefore, if one of these three equivalent conditions is satisfied, 
$H(t)$ in Theorem \ref{thm_01} is defined and positive definite by \eqref{eq114}: 
\begin{corollary} \label{cor_01} 
For $\mathcal{C}$ of \eqref{eq105}, 
the following are equivalent to each other: 
\begin{enumerate}
\item $E_\mathcal{C}$ belongs to $\mathbb{HB}$; 
\item $f_\mathcal{C}$ has no roots in $\overline{\mathbb{D}}$; 
\item $D_n(\mathcal{C})>0$ for all $1 \leq n \leq d$; 
\item $H_\mathcal{C}(t)$ is positive definite for all $0 \leq t <L$. 
Thus the quasi-canonical system attached to $H_\mathcal{C}(t)$ 
is a canonical system.
\end{enumerate}
\end{corollary}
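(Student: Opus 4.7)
The four-way equivalence decomposes into three pairwise links, each essentially set up in the excerpt, so my plan is to assemble them in sequence rather than to prove anything fundamentally new.

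For $(1)\Leftrightarrow(2)$, I will use the factorization $E_\mathcal{C}(z)=e^{iLz}f_\mathcal{C}(e^{-irz})$ (a direct re-indexing of \eqref{eq106}) together with the Hermite--Biehler characterization \cite[Ch.~VII, Thm.~6]{Levin80}: $E_\mathcal{C}\in\mathbb{HB}$ iff $E_\mathcal{C}$ has no zeros in $\overline{\C_+}$. Transporting through the change of variable $w=e^{-irz}$ converts this zero-free condition on $E_\mathcal{C}$ into the root-location condition on $f_\mathcal{C}$ in (2). Both of these links are asserted verbatim in the paragraph immediately preceding the corollary, so this step is essentially a citation. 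For $(2)\Leftrightarrow(3)$, I will quote the classical Schur--Cohn test applied to $f_\mathcal{C}$ (\cite[Cor.~11.5.15]{RaSch02}, recalled in the introduction) together with the trivial identification $D_n(\mathcal{C})=D_n(f_\mathcal{C})$ from \eqref{eq108}.

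The substantive step will be $(3)\Leftrightarrow(4)$, which uses Theorem \ref{thm_01}(iv). Assuming (3), in particular $D_d(\mathcal{C})\neq 0$, so Theorem \ref{thm_01} is available and supplies $H_n=(D_{n-1}(\mathcal{C})D_n(\mathcal{C}))^{-1}\widetilde{H}_n$ with $\widetilde{H}_n$ positive definite. Since $D_0(\mathcal{C})=1>0$ and every $D_n(\mathcal{C})>0$, each scalar prefactor is positive, so every $H_n$ and hence $H_\mathcal{C}(t)$ on $[0,L)$ is positive definite by \eqref{eq111}. Conversely, (4) implicitly requires $H_\mathcal{C}$ to be defined on all of $[0,L)$, which forces $D_d(\mathcal{C})\neq 0$ and makes Theorem \ref{thm_01} applicable; positive-definiteness of each $H_n=(D_{n-1}D_n)^{-1}\widetilde{H}_n$ then forces $D_{n-1}(\mathcal{C})D_n(\mathcal{C})>0$, and starting from $D_0=1>0$ an induction on $n$ yields $D_n(\mathcal{C})>0$ for every $1\le n\le d$.

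None of the individual links is a genuine obstacle --- every ingredient is either previously cited in the introduction or delivered by Theorem \ref{thm_01}. The only bookkeeping point I anticipate is ensuring the side hypothesis $D_d(\mathcal{C})\neq 0$ of Theorem \ref{thm_01} is available whenever needed: it is literally part of (3), it is implicit in (4), and once the equivalence $(3)\Leftrightarrow(4)$ is chained with $(1)\Leftrightarrow(2)\Leftrightarrow(3)$ it propagates to (1) and (2) automatically.
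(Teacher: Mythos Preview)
Your proposal is correct and follows essentially the same route as the paper: the paragraph immediately preceding the corollary records $(1)\Leftrightarrow(2)$ via Levin's theorem and the factorization $E_\mathcal{C}(z)=e^{iLz}f_\mathcal{C}(e^{-irz})$, $(2)\Leftrightarrow(3)$ via the Schur--Cohn test, and $(3)\Leftrightarrow(4)$ via \eqref{eq114} of Theorem~\ref{thm_01} (the equivalence being already stated in the last sentence of Theorem~\ref{thm_01}(4)). Your inductive justification for $(4)\Rightarrow(3)$ and your observation that the side hypothesis $D_d(\mathcal{C})\neq 0$ is implicit in (4) make explicit what the paper leaves to the reader, but the argument is the same.
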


As a result of Theorem \ref{thm_01} and Corollary \ref{cor_01}, 
if an exponential polynomial $E$ of \eqref{eq106} belongs to $\mathbb{HB}$, 
it is recovered as $E(z)=A(0,z)-iB(0,z)$ 
by solving the canonical system attached to $H$ defined in \eqref{eq111},  
and, the condition at the right-endpoint in \eqref{eq112} 
guarantees that this $H$ is nothing 
but the one whose existence is stated in de Branges' inverse theorem. 

The descent of the order of $E(t,z)=A(t,z)-iB(t,z)$ 
given by Theorem \ref{thm_01} (3) 
starting from $E(z)=E(0,z)$ 
is reminiscent of the relation with the Schur transformation 
$f \mapsto \bar{a}_0 f-a_d f^\ast$ 
and Cohn's algorithm (\cite[\S11.5]{RaSch02}), 
but it is not known at present whether there is a concrete relation.  
\medskip

The converse of Theorem \ref{thm_01} is the direct problem for quasi-canonical systems \eqref{eq103} with the Hamiltonians of the form \eqref{eq111}. 
It is easier than the inverse problem, 
because the Hamiltonians is a locally constant function. 

\begin{theorem}  \label{thm_02} Let $d \in \Z_{>0}$ 
and let $(H_1,H_2,\dots,H_d)$ be a sequence 
of  matrices $H_n$ in ${\rm Sym}_2(\R) \cap {\rm SL}_2(\R)$. 
Define  a locally constant matrix-valued function $H(t)$ on $[0,L)$ by 
\begin{equation} \label{eq115}
H(t)=H_n \quad \text{for} \quad r(n-1)/2 \leq t < rn/2 \quad (1 \leq n \leq d), 
\end{equation}
where $(L,r)$ are numbers in \eqref{eq104}. 
Then the quasi-canonical system \eqref{eq103} 
associated with $H(t)$ on $[0,L)$ together with the boundary condition 
\begin{equation*} 
\lim_{t \to  L}
\begin{bmatrix}
A(t,z) \\ B(t,z)
\end{bmatrix}
= 
\begin{bmatrix}
A \\ B
\end{bmatrix} \not=0 \quad (A,B \in \R)
\end{equation*}
has a unique solution ${}^{\rm t}[A(t,z)~B(t,z)]$ whose components have the form \eqref{eq113}. 
Therefore, for $r(n-1)/2 \leq t < rn/2$, $E(t,z):=A(t,z)-iB(t,z)$ is the exponential polynomial
\begin{equation*} 
\aligned 
E(t,z) 
&= \frac{1}{2} \sum_{j=0}^{d-n}\Bigl[\, 
(a_n(L-rj)-i b_n(L-rj))e^{i(L-rj-t)z} \\
& \qquad \qquad + \overline{(a_n(L-rj)+ib_n(L-rj))} e^{-i(L-rj-t)z} \,\Bigr].
\endaligned 
\end{equation*}
Moreover, 
$E(t,0)=A-iB$ and $E(t,z)$ has no real zeros for any fixed $0 \leq t \leq L$. 
In particular, each Hamiltonian of the form \eqref{eq115}, 
namely $H_n>0$ for all $n$, 
yields an exponential polynomial 
$E(0,z)=A(0,z)-iB(0,z)$ belonging to $\mathbb{HB}$.  
\end{theorem}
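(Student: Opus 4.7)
The plan is to exploit piecewise-constancy of $H(t)$: on each subinterval $I_n:=[r(n-1)/2,\, rn/2)$ the system \eqref{eq103} becomes a linear ODE with constant coefficient matrix, which I can solve in closed form and then integrate backward from $t=L$ piece by piece. The key algebraic observation is that for any $H \in {\rm Sym}_2(\R) \cap {\rm SL}_2(\R)$, writing $J:=\begin{bmatrix} 0 & -1 \\ 1 & 0\end{bmatrix}$, the matrix $JH$ satisfies ${\rm tr}(JH)=0$ and $\det(JH)=\det H = 1$, so Cayley--Hamilton yields $(JH)^2=-I$, and therefore $\exp(szJH) = \cos(sz)\,I + \sin(sz)\,JH$. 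Thus on $I_n$, setting $s:=rn/2-t$,
\begin{equation*}
\begin{bmatrix} A(t,z) \\ B(t,z) \end{bmatrix} = \bigl[\cos(sz)\,I + \sin(sz)\,JH_n\bigr]\begin{bmatrix} A(rn/2,z) \\ B(rn/2,z) \end{bmatrix},
\end{equation*}
which determines $(A,B)$ on $I_n$ uniquely and continuously from the value at the right endpoint. Telescoping downward from $t=L$, using continuity of $(A,B)$ at each junction $t=rn/2$, produces the unique global solution matching the prescribed boundary condition.

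I would then derive \eqref{eq113} by downward induction on $n$. The base case $n=d$ is immediate: the displayed formula on $I_d$ with right-end value $(A,B)$ yields a linear combination of $\cos((L-t)z)$ and $\sin((L-t)z)$, hence of $e^{\pm i(L-t)z}$, which is exactly \eqref{eq113} for $n=d$ (only the $j=0$ term), with $a_d(L), b_d(L)$ read off from $A$, $B$, and $H_d$. For the inductive step, expanding $\cos(sz) = \tfrac{1}{2}(e^{isz}+e^{-isz})$ and $\sin(sz) = \tfrac{1}{2i}(e^{isz}-e^{-isz})$ applied to each exponential $e^{\pm i(L - rj - r(n-1)/2)z}$ present at $t = r(n-1)/2$ produces, after regrouping and using $2L=rd$ to rewrite shifted phases, exactly the exponentials $e^{\pm i(L - rj' - t)z}$ with $j' \in \{0, 1, \dots, d-n+1\}$. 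This is the required shape on $I_{n-1}$, and the new coefficients $a_{n-1}(\cdot), b_{n-1}(\cdot)$ are linear in $a_n(\cdot), b_n(\cdot)$ with weights read from the entries of $H_{n-1}$.

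The three closing claims follow cleanly. At $z=0$ the right-hand side of \eqref{eq103} vanishes, so $(A(t,0), B(t,0)) \equiv (A,B)$, whence $E(t,0) = A - iB$. For the absence of real zeros of $E(t,\cdot)$: since $H(t)$ is real, for real $z_0$ the system \eqref{eq103} has real coefficients, so $A(\cdot, z_0)$ and $B(\cdot, z_0)$ are real-valued; hence $E(t_0, z_0) = 0$ would force $A(t_0, z_0) = B(t_0, z_0) = 0$, and homogeneous linearity of \eqref{eq103} propagates this zero vector to $t=L$, contradicting $(A,B) \ne 0$. Finally, when each $H_n > 0$, $H$ is a bona fide canonical-system Hamiltonian on $[0, L)$, and de Branges' theory for canonical systems delivers $E(0, z) \in \mathbb{HB}$.

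The principal obstacle is the bookkeeping in the induction step: verifying that the cross-phase exponentials produced by multiplying $\cos(sz)$ and $\sin(sz)$ against the template exponentials collapse to precisely the index range $\{0, \dots, d-n+1\}$ in \eqref{eq113}, with coefficients respecting the real-and-conjugate pairing of $a_{n-1}(\cdot)$ and $b_{n-1}(\cdot)$.
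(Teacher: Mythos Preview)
Your proposal is correct and follows essentially the same route as the paper's proof (Proposition~\ref{prop401} and \S\ref{section_5_3}): both solve the system piecewise by writing down the explicit propagator on each constant interval, telescope backward from $t=L$, derive the shape \eqref{eq113} by induction on $n$, and handle $E(t,0)$ and the absence of real zeros by the same constancy/uniqueness arguments. The one cosmetic difference is that you obtain the propagator $\cos(sz)I+\sin(sz)JH_n$ directly from $(JH_n)^2=-I$ via Cayley--Hamilton, whereas the paper derives the identical matrix by summing the Picard series \eqref{eq403} on a constant interval; your shortcut is cleaner but leads to the same formula and the same downstream computations.
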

\begin{remark} According to the normalization in Remark \ref{rem_01}, 
we can normalize the initial condition 
as 
$\begin{bmatrix} A \\ B \end{bmatrix} 
= \begin{bmatrix} 1 \\ 0 \end{bmatrix} $ by transformations 
$\begin{bmatrix} A(t,z) \\ B(t,z) \end{bmatrix} \mapsto 
M \begin{bmatrix} A(t,z) \\ B(t,z) \end{bmatrix}$ 
and 
$
H(t) \mapsto M H(t) M^{-1} 
$ for some $M \in {\rm GL}_2(\R)$. 
\end{remark}

The choice of intervals in \eqref{eq115} depending on the parity of $d$ 
is only adopted so that $H(t)$ has the same shape as the Hamiltonians obtained 
by solving the inverse problem as in Theorem \ref{thm_01}, 
and is not essential for solving the direct problem. 

Theorem \ref{thm_02} does not guarantee that 
the exponential polynomial $E(0,z)$ has the form \eqref{eq106}. 
In fact, $H(t)$ on $[0,1)$ with $H(t)=I_2$ for $0 \leq t < 1/2$ 
and $H(t)=-I_2$ for $1/2 \leq t < 1$ 
yields the constant function $E(0,z)=A-iB$, 
and $H(t)$ on $[0,1)$ with $H(t)=I_2$ for $0 \leq t < 1$ 
yields the function $E(0,z)=(A-iB)e^{-iz}$. 
It can be discriminated as follows 
whether $E(0,z)$ has the form \eqref{eq106}.  

\begin{theorem} \label{thm_03}
With the notation of Theorem \ref{thm_02}, 
$E(0,z)$ is an exponential polynomial of the form \eqref{eq106} 
with \eqref{eq105} if and only if 
\begin{equation*} 
(I-iJH_1)(I-iJH_2) \cdots (I-iJH_d)
\begin{bmatrix}
A \\ B
\end{bmatrix}
\end{equation*} 
is not proportional or equal to any of  three vectors
\begin{equation} \label{eq116}
{}^{\rm t}\!\begin{bmatrix} 1 & i \end{bmatrix}, 
\quad 
{}^{\rm t}\!\begin{bmatrix} 1 &  -i \end{bmatrix}, 
\quad 
{}^{\rm t}\!\begin{bmatrix} 0 & 0 \end{bmatrix}. 
\end{equation}
If $E(0,z)$ has the form \eqref{eq106} with \eqref{eq105}, 
we define $f(x)$ by $f(e^{-irz})=e^{-irdz/2}E(0,z)$. 
Then $f$ is a polynomial of degree $d$ 
and has $d-q$ roots inside $\mathbb{T}$ counting multiplicity, 
where $q$ is the number of sign changes in $(H_1,\dots,H_d)$.  
\end{theorem}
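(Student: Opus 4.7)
The plan is to prove the two parts separately: the iff by an explicit diagonalisation of the canonical-system evolution, and the root count by reduction to the classical Schur--Cohn test via Theorem~\ref{thm_01}. Throughout, let $J$ denote the matrix appearing in \eqref{eq103}. Since each $H_n\in{\rm Sym}_2(\R)\cap{\rm SL}_2(\R)$ satisfies $\det H_n=1$, a direct computation gives $(JH_n)^2=-\det(H_n)\,I=-I$, so $JH_n$ is a complex structure and
\begin{equation*}
\exp\bigl(\tfrac{r}{2}zJH_n\bigr)
=\cos\bigl(\tfrac{r}{2}z\bigr)I+\sin\bigl(\tfrac{r}{2}z\bigr)JH_n
=\tfrac{1}{2}e^{irz/2}P_n^{+}+\tfrac{1}{2}e^{-irz/2}P_n^{-},
\end{equation*}
where $P_n^{\pm}:=I\mp iJH_n$. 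Integrating \eqref{eq103} backward from the boundary value at $t=L$ through the $d$ constancy intervals of $H(t)$ therefore yields
\begin{equation*}
\begin{bmatrix}A(0,z)\\B(0,z)\end{bmatrix}
=\frac{1}{2^d}\prod_{n=1}^{d}\bigl[\,e^{irz/2}P_n^{+}+e^{-irz/2}P_n^{-}\,\bigr]\begin{bmatrix}A\\B\end{bmatrix}.
\end{equation*}
Expanding the product, the exponents $\frac{r}{2}\sum_n\varepsilon_n$ with $\varepsilon_n\in\{\pm 1\}$ range over $\{-L,-L+r,\dots,L\}$, so $E(0,z)=A(0,z)-iB(0,z)$ already has the shape \eqref{eq106}; only the nonvanishing of $C_{\pm L}$ is in doubt.

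For the iff, one reads off that the coefficients of $e^{iLz}$ and $e^{-iLz}$ in ${}^{\rm t}\!\begin{bmatrix}A(0,z) & B(0,z)\end{bmatrix}$ are
\begin{equation*}
v_{+}:=\frac{1}{2^d}P_1^{+}\cdots P_d^{+}\!\begin{bmatrix}A\\B\end{bmatrix},\qquad
v_{-}:=\frac{1}{2^d}P_1^{-}\cdots P_d^{-}\!\begin{bmatrix}A\\B\end{bmatrix}=\overline{v_{+}},
\end{equation*}
the last identity following from $P_n^{-}=\overline{P_n^{+}}$ (since $H_n$ is real). Hence $C_L=\begin{bmatrix}1 & -i\end{bmatrix}v_{+}$ and $C_{-L}=\begin{bmatrix}1 & -i\end{bmatrix}\overline{v_{+}}$. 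Since the kernel of $\begin{bmatrix}1 & -i\end{bmatrix}$ is spanned by ${}^{\rm t}\!\begin{bmatrix}1 & -i\end{bmatrix}$, this gives $C_L=0$ iff $v_{+}$ is proportional or equal to ${}^{\rm t}\!\begin{bmatrix}1 & -i\end{bmatrix}$, and $C_{-L}=0$ iff $v_{+}$ is proportional or equal to ${}^{\rm t}\!\begin{bmatrix}1 & i\end{bmatrix}$. So $C_LC_{-L}\ne 0$ is equivalent to $v_{+}$ avoiding all three vectors in \eqref{eq116}; and since $v_{+}$ is exactly the vector displayed in the theorem, the iff statement follows.

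For the root count, suppose $E(0,z)$ has the form \eqref{eq106}--\eqref{eq105}, and let $\mathcal{C}$ be its coefficient sequence. Once we know $D_d(\mathcal{C})\ne 0$, Theorem~\ref{thm_01}(1) applies and in particular guarantees $D_n(\mathcal{C})\ne 0$ for $1\le n\le d$; uniqueness of the inverse problem for piecewise-constant Hamiltonians forces $H_n(\mathcal{C})=H_n$ for every $n$. The factorisation \eqref{eq114} then gives $H_n=\widetilde{H}_n/(D_{n-1}(\mathcal{C})D_n(\mathcal{C}))$ with $\widetilde{H}_n$ positive definite, so $H_n$ is positive (resp.\ negative) definite exactly when $D_{n-1}D_n>0$ (resp.\ $<0$). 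Starting from $D_0(\mathcal{C})=1>0$, the Schur--Cohn count of sign changes of $(D_0,\dots,D_d)$ coincides with the number $q$ of indices at which $H_n$ is negative definite, and the classical conclusion recalled in the introduction delivers the claimed $d-q$ roots of $f$ inside $\mathbb{T}$.

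The main obstacle is establishing $D_d(\mathcal{C})\ne 0$ together with the identification $H_n(\mathcal{C})=H_n$. I expect both to come out of an iterative analysis of \eqref{eq110}: the system \eqref{eq109} amounts to peeling the leading factor $\exp((r/2)zJH_1)$ off the product above, and the nonsingularity of $JH_1$ --- automatic from $\det H_1=1$ --- yields $D_1(\mathcal{C})\ne 0$ and pins down $H_1(\mathcal{C})=H_1$; iterating the stripping identifies the whole sequence. Equivalently, one is arguing injectivity of the direct map of Theorem~\ref{thm_02} on sequences in ${\rm Sym}_2(\R)\cap{\rm SL}_2(\R)$ that produce an $E(0,z)$ satisfying the nondegeneracy \eqref{eq105}, with its inverse being the explicit construction of Theorem~\ref{thm_01}.
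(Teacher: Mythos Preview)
Your argument for the if-and-only-if part is correct and is essentially the paper's own proof, just packaged more cleanly: the paper computes the leading coefficient of the transfer matrix product recursively and arrives at the same vector $\tfrac{1}{2^d}\prod_n(I-iJH_n)\bigl[\begin{smallmatrix}A\\B\end{smallmatrix}\bigr]$, then reads off $C_{\pm L}$ exactly as you do.

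For the root count, you have the right strategy (reduce to Theorem~\ref{thm_01} and the Schur--Cohn test), but you over-complicate the one genuine gap you identify, namely $D_d(\mathcal C)\ne 0$. You do not need an iterative peeling of \eqref{eq110}. Your own formula already shows that $\bigl[\begin{smallmatrix}A(0,z)\\B(0,z)\end{smallmatrix}\bigr]$ is obtained from $\bigl[\begin{smallmatrix}A\\B\end{smallmatrix}\bigr]\ne 0$ by a matrix of determinant $1$ for every $z$ (this is Proposition~\ref{prop401} in the paper, or just $\det\exp(\theta JH_n)=1$ in your formulation). Hence $A(0,z)$ and $B(0,z)$ never vanish simultaneously, so $E=A-iB$ and $E^\sharp=A+iB$ have no common zeros, and Lemma~\ref{lem304}(1) then gives $D_d(\mathcal C)\ne 0$ directly. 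With that in hand, Theorem~\ref{thm_01} applies, and the uniqueness furnished by Proposition~\ref{prop401} (the solution is determined by $H$ and the boundary value, while $-\tfrac1z\tfrac{d}{dt}$ of the solution determines $JH$) forces $H_n(\mathcal C)=H_n$, exactly as the paper asserts. Your concluding paragraph therefore proposes a harder route than necessary; the paper closes the argument in one line by invoking Proposition~\ref{prop401}.
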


Theorem \ref{thm_03} generalizes a sufficient condition \cite[Theorem 1.5]{Su18} 
dealing with the case $A=1$, $B=0$, $H_i={\rm diag}(1/\gamma_i,\gamma_i)$. 
In fact, in that case, we have 
\begin{equation*}
\aligned 
\, & 
\begin{bmatrix} 
1 & i\gamma_{1} \\ 
-i/\gamma_{1} & 1
\end{bmatrix} 
\begin{bmatrix} 
1 & i\gamma_{2} \\ 
-i/\gamma_{2} & 1
\end{bmatrix} 
\cdots 
\begin{bmatrix} 
1 & i\gamma_{d} \\ 
-i/\gamma_{d} & 1
\end{bmatrix} 
\begin{bmatrix} 
1 \\ 0
\end{bmatrix} \\
& \quad = (\gamma_1 \gamma_2\cdots\gamma_d)^{-1}
\begin{bmatrix} 
\gamma_1(\gamma_1+\gamma_2)(\gamma_2+\gamma_3)\cdots(\gamma_{d-1}+\gamma_d) \\ 
-i(\gamma_1+\gamma_2)(\gamma_2+\gamma_3)\cdots(\gamma_{d-1}+\gamma_d)
\end{bmatrix}. 
\endaligned 
\end{equation*}
This can not be proportional to any vectors in \eqref{eq116} if $\gamma_i>0$ and $\gamma_1 \not=1$. 
\medskip

Considering Theorems \ref{thm_01}, \ref{thm_02}, and \ref{thm_03} 
together with the Schur--Cohn test, we obtain the following.

\begin{corollary} We have the one-to-one correspondence: 
\[
\left\{
\mathcal{C}=(C_{L},C_{L-r},C_{L-2r},\cdots,C_{-L}) \in \C^{d+1} ~\left\vert
\aligned 
~\cdot & ~C_LC_{-L}\not=0, \\
~\cdot & ~D_d(\mathcal{C}) \not=0, \\
~\cdot & ~\text{$f_\mathcal{C}(T)$ has $n$ roots inside $\mathbb{T}$}, \\
~\cdot & ~E_\mathcal{C}(0)=1
\endaligned 
\right.
\right\}
%
\]
\[
\text{inverse problem}~
\left\downarrow\vphantom{\int_A^B}\right. \left.\vphantom{\int_A^B}\right\uparrow
~\text{direct problem}
\]
\[
\left\{
(H_1,\dots,H_d)~\left\vert
\aligned 
~\cdot & ~H_1,\dots,H_d \in {\rm SL}_2(\R) \cap {\rm Sym}_2(\R) \\[5pt]
~\cdot & ~\text{the number of sign changes in $({\rm Tr}\, H_1,\dots, {\rm Tr}\,H_d)$ is $d-n$} \\
~\cdot & ~
\scalebox{0.9}{$
(I-iJH_1)(I-iJH_2) \cdots 
(I-iJH_d) 
\begin{bmatrix}
1 \\ 0
\end{bmatrix} 
\not=0,
~\not\in \C \begin{bmatrix} 1 \\ \pm i \end{bmatrix}
$}
\endaligned 
\right.
\right\}.
\]
\end{corollary}

\comment{
From the proof of the above results, we find that 
the correspondence 
\begin{equation*}
E_\mathcal{C} ~\mapsto~ (H_1,H_2,\dots,H_d), \quad H_n \in {\rm Sym}_2(\R) \cap {\rm SL}_2(\R), 
\end{equation*}
obtained by Theorems \ref{thm_01}, \ref{thm_02}, and \ref{thm_03} 
is one-to-one under the normalization $E(0)=1$ and the choice of intervals 
as in \eqref{eq111}. 
}

The above correspondence is compatible with the uniqueness of Hamiltonians obtained 
in de Branges' inverse theorem for entire functions in the Hermite--Biehler class. 
Hence, the exponential polynomials \eqref{eq106} belonging to the class $\mathbb{HB}$ 
are characterized in terms of the positive-deﬁniteness of Hamiltonians 
as well as the case of real coefficient in \cite{Su18}. 
\medskip

According to Corollary 1.2, there is nothing newer than the Schur--Cohn test 
regarding the criteria by which a given $E_\mathcal{C}$ belongs to  $\mathbb{HB}$, 
and 
the results \cite[Corollary 1.3, Theorems 1.6 and 1.7]{Su18} 
are reduced to the Schur--Cohn test. 
However, the method of associating $E_\mathcal{C}$ 
with Hamiltonians of quasi-canonical systems 
and the relation between the Hamiltonian $H_\mathcal{C}$ 
and determinants $D_n(\mathcal{C})$ are new. 
The former is undoubtedly important 
for direct and inverse problems for quasi-canonical systems, 
which is the subject of this paper.   
The latter shows 
the existence of an interesting class of quasi-canonical systems 
that are not necessarily canonical,  
and also contributes 
to the simplification of the proofs of the main results. 
Conversely, by proving the main theorems without using the Schur--Cohn test, 
another proof of the Schur--Cohn test may be obtained, 
but this will not be discussed in this paper. 
\medskip

To prove Theorem \ref{thm_01}, we assumed that $C_LC_{-L}\not=0$, 
but considering the relation with the Schur--Cohn test, 
it is expected that it can be removed. 
In fact, as in the case
\begin{equation*}
H_1(\mathcal{C})
 = 
\frac{1}{D_1({\mathcal{C}})}
\begin{bmatrix}
|C_{-L}-C_{L}|^2 & 2\,\Im(C_{L}C_{-L}) \\
2\,\Im(C_{L}C_{-L}) & |C_{-L}+C_{L}|^2
\end{bmatrix}, 
\end{equation*}
it is observed for small $n$ that $H_n(\mathcal{C})$ 
makes sense even if one of $C_{L}$ and $C_{-L}$ is zero. 
However, we have no idea to prove it for general $n$ 
at present.  
\medskip

The paper is organized as follows. 
We outline the proof of Theorem \ref{thm_01} in Section \ref{section_2} 
after preparing the settings similar to \cite[\S2]{Su18}, 
and complete the proof by filling in the details of Section \ref{section_2} in Section \ref{section_4}. 
The discussion in Section \ref{section_4} is a generalization of \cite{Su18}, 
but the linear equations mainly studied are changed 
(by considering a theory analogous to \cite{Su20b}), 
special matrices handled in the proof are also changed, 
and the argument of proof is largely simplified. 
In Section \ref{section_5}, we prove Theorems \ref{thm_02} and \ref{thm_03}.  
In Section \ref{section_7}, we mention an inductive way of constructing 
a triple $(H(t),A(t,z),B(t,z))$ in \eqref{eq103} 
which is different from the way of Sections \ref{section_2} and \ref{section_4}. 
The discussions of these two sections 
are straightforward generalizations of \cite[\S5-6]{Su18} according to Section \ref{section_4}. 
\medskip

\noindent
{\bf Acknowledgments}~
This work was supported by JSPS KAKENHI Grant Number JP17K05163, JP23K03050, 
and the Research Institute for Mathematical Sciences, an International Joint Usage/Research Center located in Kyoto University.

\section{Outline of the proof of Theorem \ref{thm_01}} \label{section_2}

\subsection{Hilbert spaces and operators.} 

Let $L^2(\R/(2\pi\Z))$ be the completion of 
the space of $2\pi$-periodic continuous functions on $\R$ 
with respect to the $L^2$-norm 
$\Vert f \Vert_{L^2}^2 := \langle f, f \rangle_{L^2}$, 
where 
$\langle f, g \rangle_{L^2} := (2\pi)^{-1}\int_{0}^{2\pi} f(z)\overline{g(z)}\, dz$. 
Every $f \in L^2(\R/(2\pi\Z))$ has the Fourier expansion 
$f(z) = \sum_{k \in \Z}u(k)e^{ikz}$ with $\{u(k)\}_{k \in \Z} \in l^2(\Z)$ 
and $\Vert f \Vert_{L^2}^2 = \sum_{k \in \Z}|u(k)|^2$,  
where $l^2(\Z)$ is the Hilbert space of sequences $\{u(k) \in \C \,:\, k \in \Z\}$ 
satisfying $\sum_{k \in \Z}|u(k)|^2 < \infty$. 

Fix a positive integer $d$ and set $(L,r)$ as \eqref{eq104}. 
For $t \in \R \setminus ((r/2)\Z)$, we define the vector space 
\begin{equation*}
V_t : = \left\{\left. \phi(z)= e^{-itz}f(z) + e^{itz}g(z)
\,\right|~f,\,g \in L_d^2(\R/(2\pi\Z)) \right\}
\end{equation*}
of functions of $z \in \R$, 
where 
$L_d^2(\R/(2\pi\Z))=L^2(\R/(2\pi\Z))$ if $d$ is even 
and  $L_d^2(\R/(2\pi\Z))$ is the subspace of  $L^2(\R/(2\pi\Z))$ 
consisting of  all Fourier series with odd indices $k$ if $d$ is odd. 
We define the inner product on $V_t$ by 
\begin{equation*}
\langle \phi_1, \phi_2 \rangle 
= \langle f_1, g_1 \rangle_{L^2} + \langle g_1, g_2 \rangle_{L^2}
\end{equation*}
for $\phi_j(z) = e^{-itz}f_j(z) + e^{itz}g_j(z)$ ($j=1,2$). 
Then $V_t$ with this inner product is a Hilbert space and is isomorphic to 
the (orthogonal) direct sum $L^2(\R/(2\pi\Z)) \oplus L^2(\R/(2\pi\Z))$ 
of Hilbert spaces as well as \cite[\S2]{Su18}. 
The maps $p_1:(e^{-itz}f(z) + e^{itz}g(z)) \mapsto e^{-itz}f(z)$ 
and $p_2: (e^{-itz}f(z) + e^{itz}g(z)) \mapsto e^{itz}g(z)$ 
are projections from $V_t$ to the first and the second components 
of the direct sum, respectively.
We put
\begin{equation} \label{eq201}
X(k):=e^{i(r(k+1)-1-t)z}, \quad Y(l):=e^{-i(r(l+1)-1-t)z}
\end{equation}
for $k,l \in \Z$ and $t \in \R$. 
We regard $X(k)$ and $Y(l)$ as functions of $z$, functions of $(t,z)$, 
or symbols, depending on the situation. 
For a fixed $t \in \R \setminus ((r/2)\Z)$, 
the countable set consisting of all $X(k)$ and $Y(l)$ is linearly independent over $\C$ as a set of functions of $z$, 
since the linear dependence of $\{X(k),\,Y(l)\}_{k,l \in \Z}$ implies the existence of a nontrivial pair of functions 
$f,g \in L^2(\R/(2\pi\Z))$ satisfying  $e^{-itz}f(z) + e^{itz}g(z) =0$. 
Using these vectors, $V_t$ is written as 
\begin{equation*}
\aligned 
V_t 
& = \left\{ \phi=\sum_{k \in \Z}u(k)X(k) + \sum_{l \in \Z}v(l)Y(l-r+1)
\,:~ \{u(k)\}_{k \in \Z}, \, \{v(l)\}_{l \in \Z} \in l^2(\Z) \right\}, \\
\endaligned 
\end{equation*}
and 
\begin{equation*}
\langle \phi_1, \phi_2 \rangle 
= 
\sum_{k \in \Z}u_1(k)\overline{u_2(k)} + \sum_{l \in \Z} v_1(l) \overline{v_2(l)} \quad(\phi_1,\phi_2 \in V_t), 
\end{equation*}
\begin{equation} \label{eq202}
\Vert \phi_1 \Vert^2 
= \langle \phi_1, \phi_1 \rangle 
= \sum_{k \in \Z}|u_1(k)|^2 + \sum_{l \in \Z}|v_1(l)|^2 
\quad (\phi_1 \in V_t), 
\end{equation}
if 
\begin{equation*}
\phi_i =\sum_{k \in \Z}u_i(k)X(k) + \sum_{l \in \Z}v_i(l)Y(l-r+1) \quad (i=1, 2). 
\end{equation*}
On the other hand, we have 
\begin{equation*} 
\Vert \phi \Vert^2 
= \frac{1}{2\pi}\int_{0}^{2\pi} p_1\phi(z) \overline{p_1 \phi(z)} \, dz 
+  \frac{1}{2\pi}\int_{0}^{2\pi} p_2\phi(z) \overline{p_2 \phi(z)} \, dz
\end{equation*}
for $\phi \in V_t$, since 
\begin{equation*}
\frac{1}{2\pi}\int_{0}^{2\pi}
(e^{\pm itz}f_1(z)) 
\overline{(e^{\pm itz}f_2(z))} \, dz
= \sum_{k \in \Z}u_1(k)\overline{u_2(k)} = \langle f_1, f_2  \rangle_{L^2}
\end{equation*}
for $f_j(z)=\sum_{k \in \Z}u_j(k)e^{ikz} \in L^2(\R/(2\pi\Z))$ ($j=1,2$). 
Note that, for $\phi \in V_t$, $p_1\phi$ and $p_2 \phi$ are not periodic functions of $z$, 
but the integrals $\int_I p_j\phi(z) \overline{p_j \phi'(z)} \, dz$ ($j=1,2$, $\phi, \phi' \in V_t$)
are independent of the intervals $I=[\alpha,\alpha+2\pi]$ ($\alpha \in \R$). 
We write $\phi \in V_t$ as $\phi(z)$ (respectively, $\phi(t,z)$) 
to emphasize that $\phi$ is a function of $z$ (respectively, $(t,z)$). 
If we regard $X(k)$ and $Y(l)$ as symbols, $V_t$, 
endowed with the norm defined by \eqref{eq202},
is an abstract Hilbert space isomorphic to $l^2(\Z) \oplus l^2(\Z)$.  
\medskip

For each nonnegative integer $n$, we define the closed subspace $V_{t,n}$ of $V_{t}$ 
by 
\begin{equation*}
\scalebox{0.95}{$\displaystyle{
V_{t,n} = \left\{ \phi_n=\sum_{k=0}^{\infty} u_n(k)X(k) + \sum_{l=-\infty}^{n-1} v_n(l)Y(l-r+1)
\,:~ \{u_n(k)\}_{k=0}^{\infty}, \, \{v_n(l)\}_{l=-\infty}^{n-1} \in l^2(\Z) \right\}. 
}$}
\end{equation*}
Define the projection ${\mathsf P}_n: V_t \to V_{t,n}$ by ${\mathsf P}_0=0$ and  by 
\begin{equation*}
{\mathsf P}_n \phi 
= \sum_{k=0}^{n-1} u(k)X(k) + \sum_{l=0}^{n-1} v(l)Y(l-r+1)\quad (\phi \in V_t), 
\end{equation*}
for $n \in \Z_{>0}$ (this ${\mathsf P}_n$ corresponds to ${\mathsf P}_n{\mathsf P}_n^\ast$ 
of \cite[\S2]{Su18}). 
Also define the involution ${\mathsf J}:\phi(z) \mapsto \overline{\phi(\bar{z})}$. 
Then, 
\begin{equation} \label{eq203}
\aligned 
{\mathsf J}{\mathsf P}_n \phi
& = \sum_{l=0}^{n-1} \overline{v(l)}X(l-r+1) 
+ \sum_{k=0}^{n-1}\overline{u(k)}Y(k)
\endaligned 
\end{equation}
for $\phi \in V_{t}$ and $n \in \Z_{>0}$. 
\medskip

Let $\mathcal{C} \in \C^{d+1}$ as in \eqref{eq105}.  
Using the modified function  
$
E_0(z)= e^{-i(r-1)z}E_\mathcal{C}(z)
$,  
we define two multiplication operators
\begin{equation}\label{eq204}
{\mathsf E}: \phi(z) \mapsto E_0(z) \phi(z), \quad 
{\mathsf E}^\sharp:  \phi(z) \mapsto E_0^\sharp(z) \phi(z)
\end{equation}
on $V_t$. 
These operators map $V_t$ into $V_t$, 
because ${\mathsf E}$ and ${\mathsf E}^\sharp$ are expressed as
\begin{equation*} 
{\mathsf E} = 
\sum_{j=0}^{d} C_{L-rj} {\mathsf T}_{(L-rj-r+1)/r}, 
\qquad
{\mathsf E}^{\sharp} 
=
\sum_{j=0}^{d} \overline{C_{L-rj}} \, {\mathsf T}_{-(L-rj-r+1)/r}
\end{equation*}
by using shift operators ${\mathsf T}_m:V_t \to V_t$ ($m \in \Z$) defined by 
\begin{equation*}
{\mathsf T}_m v=\sum_{k=-\infty}^{\infty} u(k)X(k+m) + \sum_{l=-\infty}^{\infty} v(l)Y(l-r+1-m). 
\end{equation*}
Both ${\mathsf E}$ and ${\mathsf E}^\sharp$ are bounded on $V_t$, 
since 
$\Vert {\mathsf E} \Vert_{\rm op} 
\leq \sum_{j=0}^{d}|C_{L-rj}|\cdot \Vert \mathsf{T}_{(L-rj-r+1)/r} \Vert_{\rm op}  \leq d M$ 
and 
$\Vert {\mathsf E}^\sharp \Vert_{\rm op} 
\leq \sum_{j=0}^{d}|C_{L-rj}|\cdot \Vert \mathsf{T}_{-(L-rj-r+1)/r} \Vert_{\rm op}  \leq d M$ 
for $M=\max\{|C_{L-rj}|\,|\, 0 \leq j \leq d\}$. 
If $E_\mathcal{C}$ has no zeros on the real line, 
${\mathsf E}$ is invertible on $V_t$ (Lemma \ref{lem301}). 
Thus the operator 
\begin{equation} \label{eq205}
\Theta := {\mathsf E}^{-1}{\mathsf E}^{\sharp}
\end{equation}
is well-defined on $V_t$, and we have 
$(\Theta \phi)(z)=(E_0^\sharp(z)/E_0(z))\phi(z)$ 
for $\phi \in V_t$. 

\subsection{Quasi-canonical systems associated with exponential polynomials.} 
Under the above settings, a quasi-canonical system associated with an exponential polynomial $E(z)$ of \eqref{eq106} 
is constructed starting from solutions of linear equations
\begin{equation} \label{eq206}
\left\{
\aligned
~( {\mathsf I} + \Theta {\mathsf J}{\mathsf P}_n )\, \phi_{n}^+
&= X(0) - \Theta Y(0), \\
~( {\mathsf I} - \Theta {\mathsf J}{\mathsf P}_n )\, \phi_{n}^-
&= X(0) + \Theta Y(0), 
\endaligned \right. 
\quad (\phi_{n}^\pm \in V_{t,n}+\Theta\mathsf{J}{\mathsf P}_nV_{t,n},\, 0 \leq n \leq d),
\end{equation}
where ${\mathsf I}$ is the identity operator. 
Note that the constant terms on the right-hand sides 
are different from that of \cite[\S2--\S3]{Su18}. 
Suppose that $D_d(\mathcal{C})\not=0$. 
Then both $\mathsf{I} \pm \Theta\mathsf{J}{\mathsf P}_n$ 
are invertible on $V_{t,n}+\Theta\mathsf{J}{\mathsf P}_nV_{t,n}$ for every $0 \leq n \leq d$, that is, 
$(\mathsf{I} \pm \Theta\mathsf{J}{\mathsf P}_n)^{-1}$ 
exist as bounded operators on $V_{t,n}+\Theta\mathsf{J}{\mathsf P}_nV_{t,n}$ (Lemma \ref{lem302}). 
Using unique solutions of \eqref{eq206}, we define
\begin{equation} \label{eq207}
\aligned 
A_n^\ast(t,z):
& = \frac{1}{2}(({\mathsf I} + {\mathsf J}){\mathsf E} \, (\phi_{n}^+ + X(0)))(t,z),  \\
B_n^\ast(t,z):
& = \frac{i}{2}(({\mathsf I} - {\mathsf J}){\mathsf E} \, (\phi_{n}^- + X(0)))(t,z).
\endaligned 
\end{equation}
The functions $A_n^\ast(t,z)$ and $B_n^\ast(t,z)$ are entire functions of $z$ 
and extend to functions of $t$ on $\R$ (by formula \eqref{eq305}). 
In particular, for $n=0$, 
\begin{equation*}
\aligned 
A_0^\ast(t,z) 
& = \frac{1}{2}\left(\mathsf{E}X(0) + \mathsf{E}^{\sharp}Y(0)\right)(t,z)
= \frac{1}{2}\left(E_0(z)e^{i(r-t-1)} + E_0^{\sharp}(z)e^{-i(r-t-1)}\right), \\ 
B_0^\ast(t,z) 
& = \frac{i}{2}\left(\mathsf{E}X(0) - \mathsf{E}^{\sharp}Y(0)\right)(t,z)
= \frac{i}{2}\left(E_0(z)e^{i(r-t-1)} - E_0^{\sharp}(z)e^{-i(r-t-1)}\right),
\endaligned 
\end{equation*}
since $\mathsf{P}_0=0$ by definition, 
and thus $A_0^\ast(0,z)=A(z)$ and $B_0^\ast(0,z)=B(z)$. 

In general, the equality 
$A_{n}^\ast(rn/2,z) =  A_{n+1}^\ast(rn/2,z)$
may not hold and the same is true about $B_n^\ast(t,z)$. 
However, we will see that the connection formula
\begin{equation} \label{eq208} 
\begin{bmatrix}
A_{n+1}^\ast(rn/2,z) \\ 
B_{n+1}^\ast(rn/2,z) 
\end{bmatrix}
= 
P_{n+1}^\ast
\begin{bmatrix}
A_{n}^\ast(rn/2,z) \\
B_{n}^\ast(rn/2,z)
\end{bmatrix}
\end{equation}
holds for some real matrix $P_{n+1}^\ast$, 
which is independent of $z$ for every $1 \leq n \leq d$ 
(Proposition \ref{prop310}). 
Therefore, 
we obtain functions $A(t,z)$ and $B(t,z)$ of $(t,z) \in [0,L) \times \C$ 
which are continuous for $t$ and entire for $z$ by defining   
\begin{equation} \label{eq209}
\begin{bmatrix}
A_n(t,z) \\
B_n(t,z)
\end{bmatrix}
:= 
P_n 
\begin{bmatrix}
A_n^\ast(t,z) \\ 
B_n^\ast(t,z) 
\end{bmatrix}
\end{equation}
for $1 \leq n \leq d$, where 
\begin{equation*} 
P_n
:= 
(P_1^\ast)^{-1}
\cdots
(P_n^\ast)^{-1}, 
\end{equation*}
and 
\begin{equation} \label{eq210}
A(t,z) := A_n(t,z), \quad B(t,z) := B_n(t,z) 
\end{equation}
for $r(n-1)/2 \leq t < rn/2$. 
We find that $A(t,z)$ and $B(t,z)$ have the form \eqref{eq113} 
(\eqref{eq209} and Lemma \ref{lem306}). 
Moreover, $(A(t,z),B(t,z))$ satisfies a quasi-canonical system \eqref{eq103} 
for the locally constant quadratic real symmetric matrix-valued function $H(t)$ defined by 
\begin{equation*} 
H(t):=H_\mathcal{C}(t):=H_n
=\begin{bmatrix}
\alpha_n & \beta_n \\ \beta_n & \gamma_n
\end{bmatrix} 
\quad \text{if} \quad r(n-1)/2 \leq a < rn/2,
\end{equation*}
where $H_n$ is defined by 
\begin{equation} \label{eq211}
\aligned 
H_n 
& =
\begin{bmatrix}
0 & 1 \\ -1 & 0
\end{bmatrix}
P_n
\begin{bmatrix}
0 & 1 \\ -1 & 0
\end{bmatrix} 
P_n^{-1}, 
\endaligned 
\end{equation}
together with the boundary conditions \eqref{eq112} 
(Proposition \ref{prop309} with \eqref{eq209} and \eqref{eq210}). 
These $H_n$ are equal to the matrices defined in \eqref{eq110} 
(Propositions \ref{prop310} and \ref{prop312}). 
Equality \eqref{eq114} is obtained by studying the solutions of equations in \eqref{eq109} 
(Proposition \ref{prop312}). 
As a summary of the above argument, we obtain Theorem \ref{thm_01}. 
See Section \ref{section_4} for details.  

On the other hand, Theorems \ref{thm_02} and \ref{thm_03} 
follow from the standard properties of quasi-canonical systems as described in Section \ref{section_5} .

%
%
\section{Proof of Theorem \ref{thm_01}.} \label{section_4}
%
%

We complete the proof of Theorem \ref{thm_01} in this section 
by filling in the details of the outline 
described in the previous section. 
We fix $d \in \Z_{>0}$ and a sequence $\mathcal{C} \in \C^{d+1}$ as in \eqref{eq105} throughout this section. 

\begin{lemma} \label{lem301}
Let ${\mathsf E}$ be the multiplication operator defined by \eqref{eq204} 
for $E=E_\mathcal{C}$. 
Suppose that $E$ has no real zeros. 
Then ${\mathsf E}$ is invertible on $V_t$, 
and thus $\Theta$ of \eqref{eq205} 
is well-defined as a bounded operator on $V_t$.  
Moreover $\Vert \Theta \Vert_{\rm op}=1$. 
\end{lemma}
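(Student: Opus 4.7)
The plan is to exploit the fact that $\mathsf{E}$ and $\mathsf{E}^\sharp$ act as honest multiplication operators on $V_t$ by the bounded periodic functions $E_0(z)$ and $E_0^\sharp(z)$, together with the isometric identification $\|\phi\|^2 = \frac{1}{2\pi}\int_0^{2\pi}|p_1\phi(z)|^2\,dz + \frac{1}{2\pi}\int_0^{2\pi}|p_2\phi(z)|^2\,dz$ that reduces everything to standard $L^2$-multiplier estimates on a circle. Writing $\phi(z) = e^{-itz}f(z) + e^{itz}g(z)$ with $f,g \in L^2_d(\R/(2\pi\Z))$, and using the $z$-periodicity of $E_0$ with period $2\pi$ (indeed $\pi$ when $d$ is odd, as then every frequency $L-rj-r+1 = d-2j-1$ appearing in $E_0$ lies in $2\Z$), one has $\mathsf{E}\phi = e^{-itz}(E_0 f) + e^{itz}(E_0 g)$, so $\mathsf{E}$ intertwines the projections $p_1$ and $p_2$. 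The hypothesis that $E$ has no real zeros, combined with $|E_0(z)| = |E(z)|$ on $\R$ and the compactness of a period, yields $\delta := \inf_{z\in\R}|E_0(z)| > 0$.

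For invertibility I would produce the candidate $\mathsf{E}^{-1}$ as multiplication by $1/E_0(z)$. This multiplier is bounded with $\|1/E_0\|_\infty = 1/\delta$, and the only subtle point is that it must send $L^2_d$ into itself when $d$ is odd; this follows because $1/E_0$ inherits $\pi$-periodicity from $E_0$ and therefore has only even Fourier modes over $[0,2\pi]$, so that its product with an odd-mode function remains odd-mode. The integral formula for the $V_t$-norm combined with the elementary pointwise bound $\|h f\|_{L^2} \leq \|h\|_\infty\|f\|_{L^2}$ then shows that multiplication by $1/E_0$ is a bounded two-sided inverse of $\mathsf{E}$ on $V_t$, with operator norm $1/\delta$. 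Consequently $\Theta = \mathsf{E}^{-1}\mathsf{E}^\sharp$ is a well-defined bounded operator on $V_t$, coinciding with multiplication by $E_0^\sharp(z)/E_0(z)$.

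For the operator norm, the key observation is that $E_0^\sharp(z) = \overline{E_0(z)}$ for $z \in \R$, so the multiplier $E_0^\sharp/E_0$ has constant modulus one on $\R$. Multiplication by a unit-modulus periodic function is an isometry on $L^2(\R/(2\pi\Z))$, and by the same even-mode argument preserves $L^2_d$ in the $d$-odd case. Therefore $\Theta$ acts isometrically on each of the two $L^2_d$-components of $V_t$, and since $\|\phi\|_{V_t}^2$ is the sum of the two component norms, $\Theta$ is itself an isometry on $V_t$, which since $V_t\neq 0$ gives $\|\Theta\|_{\rm op} = 1$. The only piece of bookkeeping that needs care is the Fourier-mode tracking when $d$ is odd to confirm that $1/E_0$ and $E_0^\sharp/E_0$ both preserve the odd-mode subspace $L^2_d$; once the $\pi$-periodicity of $E_0$ is noted, the remainder is a routine application of the multiplier-norm formula.
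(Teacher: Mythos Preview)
Your proof is correct and follows essentially the same approach as the paper: reduce to multiplication on the $L^2_d$-components, construct the inverse as multiplication by $1/E_0$, and compute $\Vert\Theta\Vert_{\rm op}$ from $|E_0^\sharp/E_0|=1$ on $\R$. Your handling of the odd-$d$ case via the $\pi$-periodicity of $E_0$ (so that $1/E_0$ has only even Fourier modes and therefore preserves the odd-mode subspace) is a slightly more direct variant of the paper's contrapositive argument, and your observation that $\Theta$ is in fact an isometry is marginally sharper than the paper's bare norm computation, but neither difference is substantive.
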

\begin{proof}  
It is sufficient to prove that ${\mathsf E}$ is invertible on $L_d^2(\R/(2\pi\Z))$, 
since $V_t$ is a direct sum of $e^{\pm itz}L_d^2(\R/(2\pi\Z))$ 
and $e^{\pm 2 itz}(1/E_0(z))f(z)=g(z)$ is impossible for any $0 \not=f,g \in L_d^2(\R/(2\pi\Z))$. 
We have $1/E_0(z) \in L^\infty(\R/(2\pi\Z))$ by assumption. 
Therefore, multiplication by $1/E_0(z)$ 
defines a bounded operator ${\mathsf E}^{-1}$ on $L^2(\R/(2\pi\Z))$ 
with the norm $\Vert {\mathsf E}^{-1} \Vert_{\rm op}=\Vert 1/E_0 \Vert_{L^\infty}$. 
Moreover $\Vert \Theta \Vert_{\rm op}=\Vert E_0^\sharp/E_0 \Vert_{L^\infty}=1$. 
Hence the case of even $d$ is proved. 
For odd $d$, we find that $(1/E_0(z))f(z) \in L_d^\infty(\R/(2\pi\Z))$ 
for $f \in L_d^\infty(\R/(2\pi\Z))$, 
since $f(z)=E_0(z)g(z)$ is impossible 
for a Fourier series $g \in L^2(\R/(2\pi\Z))$ containing $e^{ikz}$ of an even index. 
Hence the claim holds as well. 
\end{proof} 

\begin{lemma} \label{lem302} 
Let $t \not \in (r/2)\Z$. 
Suppose that $E=E_\mathcal{C}$ has no real zeros.
Then, $\Theta\mathsf{J}\mathsf{P}_n$ 
defines a compact anti-linear (conjugate linear)  
operator on $V_{t,n}+\Theta\mathsf{J}{\mathsf P}_nV_{t,n}$ for each $0 \leq n \leq d$. 
Additionally suppose that $D_d({\mathcal{C}})\not=0$. 
Then, $\mathsf{I} \pm \Theta\mathsf{J}{\mathsf P}_n$ 
are invertible on $V_{t,n}+\Theta\mathsf{J}{\mathsf P}_n V_{t,n}$, and 
\eqref{eq206} have unique solutions in $V_{t,n}+\Theta\mathsf{J}{\mathsf P}_nV_{t,n}$ 
for each $0 \leq n \leq d$. 
\end{lemma}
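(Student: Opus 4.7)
The lemma has two essentially independent parts. For the compactness claim, the projection $\mathsf{P}_n$ has rank $2n$, with image spanned by $\{X(0),\dots,X(n-1),Y(-r+1),\dots,Y(n-r)\}$. Composing with the bounded anti-linear involution $\mathsf{J}$ and the bounded linear operator $\Theta$ (well-defined with $\Vert\Theta\Vert_{\rm op}=1$ by Lemma \ref{lem301}) yields a finite-rank anti-linear operator, which is automatically compact.

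For the invertibility on $V_{t,n}+\Theta\mathsf{J}\mathsf{P}_nV_{t,n}$, I would first check that this subspace is invariant under $\Theta\mathsf{J}\mathsf{P}_n$ (a direct calculation using $\mathsf{P}_n V_t=\mathrm{Im}(\mathsf{P}_n)\subset V_{t,n}$), and that the right-hand sides $X(0)\mp\Theta Y(0)$ of \eqref{eq206} actually lie in this subspace, because $\Theta Y(0)=\Theta\mathsf{J}\mathsf{P}_n X(0)\in\Theta\mathsf{J}\mathsf{P}_n V_{t,n}$. I would then linearize the problem by means of the identity
\[
(\mathsf{I}\mp\Theta\mathsf{J}\mathsf{P}_n)(\mathsf{I}\pm\Theta\mathsf{J}\mathsf{P}_n)=\mathsf{I}-(\Theta\mathsf{J}\mathsf{P}_n)^2,
\]
whose right-hand side is a linear compact perturbation of the identity. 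The Fredholm alternative says that this operator is invertible iff it is injective, and combining with the factorization, each anti-linear factor $\mathsf{I}\pm\Theta\mathsf{J}\mathsf{P}_n$ is invertible on the invariant subspace iff it is injective there.

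To verify injectivity, any $\phi\in\ker(\mathsf{I}\pm\Theta\mathsf{J}\mathsf{P}_n)$ satisfies $\phi=\mp\Theta\mathsf{J}\mathsf{P}_n\phi$, so it is completely determined by $\psi:=\mathsf{P}_n\phi\in\mathrm{Im}(\mathsf{P}_n)$, which in turn must solve the self-contained equation $\psi\pm\mathsf{P}_n\Theta\mathsf{J}\psi=0$ in the $2n$-dimensional space $\mathrm{Im}(\mathsf{P}_n)$; conversely any such $\psi$ lifts to a kernel element via $\phi=\mp\Theta\mathsf{J}\psi$. Writing the matrix of the anti-linear map $\psi\mapsto\mathsf{P}_n\Theta\mathsf{J}\psi$ in the basis $\{X(0),\dots,X(n-1),Y(-r+1),\dots,Y(n-r)\}$, using $\Theta\phi(z)=(E_0^\sharp(z)/E_0(z))\phi(z)$ together with $\mathsf{J}X(k)=Y(k)$ and $\mathsf{J}Y(l)=X(l)$, the entries should, after clearing denominators by $\mathsf{E}$ (i.e.\ replacing $\Theta\mathsf{E}$ by $\mathsf{E}^\sharp$ when comparing columns), organize themselves into precisely the block shape of $L_n^\pm(\mathcal{C})$ in \eqref{eq101}. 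Injectivity on $\mathrm{Im}(\mathsf{P}_n)$ is then equivalent to $D_n(\mathcal{C})=\det L_n^\pm(\mathcal{C})\neq 0$, and the standing hypothesis $D_d(\mathcal{C})\neq 0$ is expected to imply $D_n(\mathcal{C})\neq 0$ for all $0\leq n\leq d$ via a determinant identity to be supplied elsewhere in Section \ref{section_4}, with the case $n=0$ trivial since $\mathsf{P}_0=0$.

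The main obstacle is the explicit algebraic identification of this finite-dimensional matrix with $L_n^\pm(\mathcal{C})$: one has to track carefully how $\Theta=\mathsf{E}^{-1}\mathsf{E}^\sharp$ acts on the basis vectors after projection, and how the anti-linearity of $\mathsf{J}$ produces the conjugate-containing block pattern $\bigl[\begin{smallmatrix}{}^{\rm t}M_n & \pm{}^{\rm t}\overline{N_n}\\ \pm N_n & \overline{M_n}\end{smallmatrix}\bigr]$ visible in \eqref{eq101}. Once this identification is in place, everything else reduces to the Fredholm alternative and a finite-dimensional linear algebra statement.
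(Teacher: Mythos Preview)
Your compactness argument and the Fredholm reduction via $(\mathsf{I}\mp T)(\mathsf{I}\pm T)=\mathsf{I}-T^2$ are fine, and the bijection between $\ker(\mathsf{I}\pm\Theta\mathsf{J}\mathsf{P}_n)$ on $W_n$ and $\ker(\mathsf{I}\pm\mathsf{P}_n\Theta\mathsf{J})$ on $\mathrm{Im}(\mathsf{P}_n)$ is correct. The gap is in the identification of the resulting $2n\times 2n$ system with $L_n^\pm(\mathcal{C})$. The matrix of the anti-linear map $\psi\mapsto\mathsf{P}_n\Theta\mathsf{J}\psi$ on $\mathrm{Im}(\mathsf{P}_n)$ has entries that are the Fourier coefficients of $E_0^\sharp/E_0$, not the polynomial coefficients $C_k$, and your ``clearing denominators by $\mathsf{E}$'' does not work because $\mathsf{E}$ does not commute with $\mathsf{P}_n$: from $\psi+\mathsf{P}_n\Theta\mathsf{J}\psi=0$ one cannot pass to $\mathsf{E}\psi+\mathsf{E}^\sharp\mathsf{J}\psi=0$. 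In fact the determinant of your projected system is \emph{not} $D_n(\mathcal{C})$ in general (try $d=2$, $\mathcal{C}=(1,i,1)$, where $D_1=0$ but the projected $n=1$ system is still nonsingular because the zeroth Fourier coefficient of $E_0^\sharp/E_0$ has modulus $<1$).

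The paper closes exactly this gap by a norm argument you omit: since $\Theta$ is an \emph{isometry} (Lemma~\ref{lem301} gives more than $\Vert\Theta\Vert_{\rm op}=1$), any $\phi$ with $\Theta\mathsf{J}\mathsf{P}_n\phi=\lambda\phi$, $|\lambda|=1$, satisfies $\Vert\phi\Vert=\Vert\Theta\mathsf{J}\mathsf{P}_n\phi\Vert=\Vert\mathsf{P}_n\phi\Vert$, which forces $\phi\in\mathrm{Im}(\mathsf{P}_n)$. Once $\phi=\mathsf{P}_n\phi$, the equation becomes $\lambda\mathsf{E}\phi=\mathsf{E}^\sharp\mathsf{J}\phi$ with \emph{no projection}, and comparing $2n$ selected $X(k)$-coefficients yields the matrix $M_\lambda=\bigl[\begin{smallmatrix}\lambda\,{}^{\rm t}M_n & \overline{{}^{\rm t}N_n}\\ \lambda N_n & \overline{M_n}\end{smallmatrix}\bigr]$, whose determinant is $\lambda^n D_n(\mathcal{C})$. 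The paper then argues that the absence of modulus-one eigenvalues forces $\Vert\Theta\mathsf{J}\mathsf{P}_n|_{W_n}\Vert_{\rm op}<1$ and concludes by Neumann series rather than Fredholm. You could repair your argument by inserting the same isometry step (applied to $\psi=-\mathsf{P}_n\Theta\mathsf{J}\psi$ it gives $\Theta\mathsf{J}\psi\in\mathrm{Im}(\mathsf{P}_n)$, so the projection drops), after which both routes reach the same finite matrix.
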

\begin{remark}
If $r \in r\Z/2$, $\mathsf{I}\pm\Theta\mathsf{J}\mathsf{P}_n$ may not be invertible. 
\end{remark}
\begin{proof} 
The assertion is trivial for $n=0$, since ${\mathsf P}_0=0$ as an operator. 
Let $n \geq 1$ and write $W_n=V_{t,n}+\Theta\mathsf{J}{\mathsf P}_nV_{t,n}$.  
By definition, $\mathsf{P}_n$ is a projection from $V_t$ into $V_{t,n}$, 
so $\Theta\mathsf{J}{\mathsf P}_n$ is an operator on $W_n$.  
The image of $W_n$ by $\mathsf{E}^\sharp\mathsf{J}\mathsf{P}_n$ 
is finite dimensional by definition of $\mathsf{E}^\sharp$ and \eqref{eq203}, 
thus $\Theta\mathsf{J}\mathsf{P}_n
=\mathsf{E}^{-1}(\mathsf{E}^\sharp\mathsf{J}\mathsf{P}_n)$ 
is a finite rank operator which is compact. 
On the other hand, 
$\Vert \Theta\mathsf{J}{\mathsf P}_n \Vert_{\rm op} 
\leq \Vert \Theta\Vert_{\rm op}\cdot \Vert\mathsf{J}{\mathsf P}_n \Vert_{\rm op}
\leq  1$. 
Therefore, 
if $\Theta\mathsf{J}{\mathsf P}_n|_{W_n}$
has no eigenvalues of modulus one, 
$\Vert \Theta\mathsf{J}{\mathsf P}_n|_{W_n} \Vert_{\rm op}<1$ 
and thus 
$\mathsf{I} \pm \Theta\mathsf{J}{\mathsf P}_n$ 
are invertible on $W_n$ by the convergence of Neumann series. 

Assume that $\Theta{\mathsf J}{\mathsf P_n} \phi
 = \lambda \phi$ and $|\lambda|=1$ 
for $\phi \in W_n$.
Because $\Theta$ is an isometry on $V_t$ by Lemma \ref{lem301}, 
we have $\Vert \Theta {\mathsf J}{\mathsf P_n} \phi \Vert
 = \Vert \phi \Vert$, and 
$
\Vert \Theta{\mathsf J}{\mathsf P_n} \phi \Vert^2 
 = \Vert {\mathsf J}{\mathsf P_n} \phi \Vert^2 
 = \sum_{k=0}^{n-1} |u(k)|^2 + \sum_{l=0}^{n-1} |v(l)|^2
$
by \eqref{eq203}, while 
$ \Vert \phi \Vert^2 = \sum_{k \in \Z} |u(k)|^2 + \sum_{l \in \Z} |v(l)|^2$. 
Thus, 
\begin{equation*} 
\phi = \sum_{k=0}^{n-1} u_n(k)X(k) + \sum_{l=0}^{n-1} v_n(l)Y(l-r+1).
\end{equation*} 
For such $\phi$, 
\begin{equation*} 
\aligned 
\mathsf{E}^{\sharp}{\mathsf J}{\mathsf P_n}\phi
& = \sum_{j=0}^{d} \sum_{l=0}^{n-1} \overline{C_{-(L-rj)}} 
\overline{v(l)}X(l+(d-r+1)/2-j) \\
& \quad + \sum_{j=0}^{d} \sum_{k=0}^{n-1} \overline{C_{-(L-rj)}} 
\overline{u(k)}Y(k-(d+r-1)/2+j) 
\endaligned 
\end{equation*} 
and
\begin{equation*} 
\aligned 
\mathsf{E} \phi
& = \sum_{j=0}^{d} \sum_{k=0}^{n-1} C_{L-rj} u(k) X(k+(d-r+1)/2-j) \\
& \quad + \sum_{j=0}^{d} \sum_{l=0}^{n-1}C_{L-rj} v(l) Y(l-(d+r-1)/2+j).
\endaligned 
\end{equation*} 
Comparing $2n$ coefficient of $X(k)$ 
with indices 
$-(d+r-1)/2 \leq k \leq -(d+r-1)/2+n-1$ 
and 
$(d-r+1)/2 \leq k \leq (d-r+1)/2+n-1$ 
in the equality 
$\lambda \mathsf{E}\phi-\mathsf{E}^{\sharp}\mathsf{J}\mathsf{P}_n \phi =0$, 
we obtain the linear equation
\begin{equation} \label{eq301}
M_\lambda \cdot
{}^{t}\! \begin{bmatrix}u(0) & \cdots & u(n-1) & 
\overline{v(0)} & \cdots & \overline{v(n-1)} \end{bmatrix} =0, 
\end{equation}
where 
$\displaystyle{
M_\lambda=
\begin{bmatrix}
\lambda \cdot {}^{\rm t}\!M_n({\mathcal{C}}) & \overline{{}^{\rm t}\!N_n({\mathcal{C}})} \\
\lambda \cdot  N_n({\mathcal{C}}) & \overline{M_n({\mathcal{C}})}
\end{bmatrix}. 
}
$
Here, $\det M_\lambda \not=0$ by assumption for $D_d({\mathcal{C}})$. 
Therefore \eqref{eq301} has no nontrivial solutions, which implies $\phi=0$. 
Consequently, none of $\lambda \in \C$ with modulus $1$ is an eigenvalue 
of $\Theta\mathsf{J}{\mathsf P}_n|_{W_n}$, 
and hence complete the proof. 
\end{proof}

\begin{lemma} \label{lem304}
Let $E=E_\mathcal{C}$. 
\begin{enumerate}
\item Suppose that $D_d(\mathcal{C})\not=0$. 
Then $E$ and $E^\sharp$ have no common zeros. 
In particular, $E$ has no real zeros. 
\item Suppose that $E$ belongs to the Hermite--Biehler class $\mathbb{HB}$. 
Then $D_d(\mathcal{C})\not=0$. 
\end{enumerate}
\end{lemma}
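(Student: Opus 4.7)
The plan is to reduce both parts to the classical fact that two polynomials share a common root iff their Sylvester resultant vanishes, by identifying $L_d^+(f)$ (up to row and column permutations) with a resultant-type matrix for the pair $(f, f^\sharp)$, where $f = f_{\mathcal{C}}$.

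First I would rewrite the blocks of $L_d^+(f)$. Since the coefficient of $x^k$ in $f^\sharp$ equals $\overline{a_{d-k}}$, matching entries directly gives $\overline{N_d(f)} = M_d(f^\sharp)$ and $\overline{M_d(f)} = N_d(f^\sharp)$, so
\[
L_d^+(f) = \begin{bmatrix} {}^t M_d(f) & {}^t M_d(f^\sharp) \\ N_d(f) & N_d(f^\sharp) \end{bmatrix}.
\]
On the other hand, the matrix of the $\C$-linear map $(p, q) \mapsto f p + f^\sharp q : (\C[x]_{<d})^2 \to \C[x]_{<2d}$, split into its low- and high-degree parts, is
\[
\tilde S(f, f^\sharp) = \begin{bmatrix} {}^t N_d(f) & {}^t N_d(f^\sharp) \\ M_d(f) & M_d(f^\sharp) \end{bmatrix},
\]
whose determinant is $\pm\,\mathrm{Res}(f, f^\sharp)$. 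Letting $R$ denote the reverse permutation matrix on $\C^d$, a direct entrywise computation yields $R\, M_d(h)\, R = {}^t M_d(h)$ and $R\, N_d(h)\, R = {}^t N_d(h)$ for any polynomial $h$ of degree $d$. Conjugating $\tilde S$ by $\mathrm{diag}(R, R)$ and swapping its two row blocks therefore converts it into $L_d^+(f)$, giving $\det L_d^+(f) = \pm\,\mathrm{Res}(f, f^\sharp)$. Combined with the bijection $z_0 \mapsto x_0 := e^{-irz_0}$ between common zeros of $E, E^\sharp$ and common roots in $\C^{\times}$ of $f, f^\sharp$ (using $E(z) = e^{iLz} f(e^{-irz})$ and the parallel formula $E^\sharp(z) = e^{iLz} f^\sharp(e^{-irz})$, obtained by analytic continuation from the real line), this yields the equivalence
\[
D_d(\mathcal{C}) \neq 0 \iff E \text{ and } E^\sharp \text{ have no common zero.}
\]

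Part~(1) is the forward direction of this equivalence; the statement that $E$ has no real zeros follows at once, since any real zero $x$ of $E$ satisfies $E^\sharp(x) = \overline{E(x)} = 0$ and hence is a common zero. For Part~(2), I would use that $E \in \mathbb{HB}$ forbids zeros of $E$ in $\overline{\C_+}$: a zero in $\C_+$ would contradict the strict inequality $|E^\sharp(z)| < |E(z)|$, and real zeros are excluded by the Hermite--Biehler definition. Because $E^\sharp(z) = 0 \iff E(\bar z) = 0$, the function $E^\sharp$ then has no zeros in $\overline{\C_-}$. Consequently, a common zero of $E$ and $E^\sharp$ would have to lie in neither $\overline{\C_+}$ nor $\overline{\C_-}$, which is impossible since $\overline{\C_+} \cup \overline{\C_-} = \C$. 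The equivalence above then delivers $D_d(\mathcal{C}) \neq 0$.

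I expect the main obstacle to be the matrix identification in the first step: the entry-level identities $\overline{M_d(f)} = N_d(f^\sharp)$ and $\overline{N_d(f)} = M_d(f^\sharp)$, together with the reversal conjugation $R\, M_d\, R = {}^t M_d$ and $R\, N_d\, R = {}^t N_d$, all require careful index bookkeeping. Once these are in place the reduction to the Sylvester resultant is immediate, and the Hermite--Biehler argument for Part~(2) is very short.
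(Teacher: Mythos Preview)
Your proof is correct and follows essentially the same route as the paper: both reduce to the equivalence ``$D_d(\mathcal{C})=0$ iff $f_{\mathcal C}$ and $f_{\mathcal C}^\sharp$ share a root,'' then transfer this to $E,E^\sharp$ via $E(z)=e^{iLz}f(e^{-irz})$ and $E^\sharp(z)=e^{iLz}f^\sharp(e^{-irz})$, and handle part~(2) by observing that a function in $\mathbb{HB}$ cannot share a zero with its $\sharp$-conjugate. The only difference is that the paper quotes the key equivalence from Rahman--Schmeisser (Lemmas~11.5.11--11.5.12), whereas you prove it from scratch by identifying $L_d^+(f)$, up to row/column reversals and a block swap, with the Sylvester matrix of $(f,f^\sharp)$, yielding the slightly sharper statement $D_d(f)=\pm\operatorname{Res}(f,f^\sharp)$; this makes your argument more self-contained at the cost of the index bookkeeping you flagged.
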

\begin{proof} 
The determinant $D_d(\mathcal{C})$ is zero 
if and only if $f_\mathcal{C}(T)$ and $f_\mathcal{C}^\sharp(T):=T^{d}\overline{f_\mathcal{C}(T^{-1})}$ 
have a common root (\cite[Lemmas 11.5.11 and 11.5.12]{RaSch02}). 
The latter is equivalent that 
$E$ and $E^\sharp$ have a common zero, 
since $E(z)=e^{iLz}f_\mathcal{C}(e^{-irz})$ 
and $E^\sharp(z)=e^{iLz}f_\mathcal{C}^\sharp(e^{-irz})$. 
In general, if an entire function $F(z)$ has a real zero, 
it is also a zero of $F^\sharp(z)$. 
Hence (1) holds. 
If $E$ belongs to $\mathbb{HB}$, it has no real zeros and 
$|E(\bar{z})|<|E(z)|$ in $\C_+$ by definition of $\mathbb{HB}$. 
Therefore $E$ and $E^\sharp$ have no common zeros. 
Hence (2) holds. 
\end{proof}

In the remaining part of this section, 
we assume that $\mathcal{C}$ is taken as in \eqref{eq105} and satisfies 
\begin{equation*}
D_d(\mathcal{C})\not=0
\end{equation*}
so that both $\mathsf{I} \pm \Theta\mathsf{J}\mathsf{P}_n$ 
are invertible on $V_{t,n}+\Theta\mathsf{J}{\mathsf P}_nV_{t,n}$ for every $0 \leq n \leq d$ 
by Lemmas \ref{lem301} and \ref{lem302}. 
Note that this assumption is satisfied if $E_\mathcal{C}$ belongs to $\mathbb{HB}$ 
by Lemma \ref{lem304}. 

Under the above assumption, we consider the equations 
\begin{equation} \label{eq302}
\left\{ \aligned
~( {\mathsf E} + {\mathsf E}^{\sharp} {\mathsf J}{\mathsf P}_n )\, \phi_{n}^+
&= {\mathsf E}X(0) - {\mathsf E}^{\sharp} Y(0), \\
~( {\mathsf E} - {\mathsf E}^{\sharp} {\mathsf J}{\mathsf P}_n )\, \phi_{n}^-
&= {\mathsf E}X(0) + {\mathsf E}^{\sharp} Y(0), 
\endaligned
\right.
\quad (\phi_{n}^\pm \in V_{t,n}+\Theta\mathsf{J}{\mathsf P}_nV_{t,n} ,\, 0 \leq n \leq d),
\end{equation}
which is equivalent to \eqref{eq206}, since ${\mathsf E}$ is invertible.  
Firstly, we note that 
each $\phi \in V_{t,n}+\Theta\mathsf{J}{\mathsf P}_nV_{t,n}$ has the absolutely convergent expansion 
\begin{equation*} 
\phi = \sum_{k=0}^{\infty} u(k)X(k) 
+ \sum_{l=-\infty}^{n-1} v(l)Y(l-r+1)
\end{equation*}
as a function of $z$ if $\Im(z)>0$ is large enough. This is trivial for $\phi \in V_{t,n}$ 
and follows for $\phi \in \Theta\mathsf{J}{\mathsf P}_nV_{t,n}$ from \eqref{eq203} 
and the expansion 
\begin{equation*} 
\frac{E_0^\sharp (z)}{E_0(z)}
= \frac{\overline{C_{L}} + \sum_{j=1}^{d} \overline{C_{L-rj}} e^{irjz}}
{C_{-L}+\sum_{j=1}^{d} C_{-(L-rj)} e^{irjz}} \cdot e^{2i(r-1)z}  
= e^{2i(r-1)z} \sum_{m=0}^{\infty} \widetilde{C}_m e^{irmz} 
\end{equation*}
that holds if $\Im(z)>0$ is large enough. 
Secondly, we introduce several special matrices to study \eqref{eq302}. 
We define the square matrix $\mathfrak{E}_0$ of size $8d$ by 
\begin{equation*}
\mathfrak{E}_0=\mathfrak{E}_0(\mathcal{C}):=
\left[
\begin{array}{cc|cc|cc|cc}
 & & & & & & & \\
 \multicolumn{2}{c|}{\raisebox{1.5ex}[0pt]{$\mathfrak{e}_0(\mathcal{C}) $}} & & & & & &  \\ \hline
 & & & & & & &  \\
 & &  \multicolumn{2}{c|}{\raisebox{1.5ex}[0pt]{$\mathfrak{e}_0(\mathcal{C}) $}}  & & & & \\ \hline
 & & & & & & & \\
 & & & & \multicolumn{2}{c|}{\raisebox{1.0ex}[0pt]{${}^{\rm t}\overline{\mathfrak{e}_0(\mathcal{C})}$}} & & \\ \hline
 & & & & & & & \\
 & & & & & & \multicolumn{2}{c}{\raisebox{1.0ex}[0pt]{${}^{\rm t}\overline{\mathfrak{e}_0(\mathcal{C})}$}}
\end{array}
\right],
\end{equation*}
where $\mathfrak{e}_0(\mathcal{C}) $ is the lower triangular matrix of size $2d$ defined by
\begin{equation*}
\scalebox{0.9}{$
\mathfrak{e}_0=\mathfrak{e}_0(\mathcal{C}) 
:= 
\left[
\begin{array}{llllllll}
C_{-L} & & & & & & & \\
C_{-L+r} & \ddots & & & & & & \\
\vdots & \ddots & C_{-L} & & & & & \\
C_{L-r} & \ddots & C_{-L+r} & C_{-L} & & & & \\
C_{L} & \ddots & \vdots & C_{-L+r} & C_{-L} & & & \\
0 & \ddots & C_{L-r} & \vdots & \ddots & \ddots & & \\
\vdots & \ddots & C_{L} & C_{L-r} & \ddots  & C_{-L+r} & C_{-L} & \\
0 & \cdots & 0 & C_{L} & C_{L-r} & \cdots  & C_{-L+r} & C_{-L} \\
\end{array}
\right],
$}
\end{equation*}
and define the square matrix $\mathfrak{E}_n^\sharp$ of size $8d$ by 
\begin{equation*}
\mathfrak{E}_n^\sharp 
:=\mathfrak{E}_n^\sharp (\mathcal{C}):=
\left[
\begin{array}{cc|cc|cc|cc}
 & & & & & & & \\
  & & & & \multicolumn{2}{c|}{\raisebox{1.5ex}[0pt]{$
\overline{\mathfrak{e}_{2,n}(\mathcal{C})} $}} & &  \\ \hline
 & & & & & & &  \\
 & &  & & & &  \multicolumn{2}{c}{\raisebox{1.2ex}[0pt]{$
\overline{\mathfrak{e}_{1,n}(\mathcal{C})} $}} \\ \hline
 & & & & & & & \\
\multicolumn{2}{c|}{\raisebox{1.2ex}[0pt]{$
J_{2d}  \cdot \mathfrak{e}_{1,n}(\mathcal{C}) \cdot J_{2d} 
$}} & & & &  & & \\ \hline
 & & & & & & & \\
 & & \multicolumn{2}{c|}{\raisebox{1.0ex}[0pt]{$
J_{2d} \cdot \mathfrak{e}_{2,n}(\mathcal{C}) \cdot J_{2d}  
$}} & & & & 
\end{array}
\right]
\end{equation*}
with 
\begin{equation*}
\aligned 
\mathfrak{e}_{1,n}
& = \mathfrak{e}_{1,n}(\mathcal{C}) 
:= 
\left[
\begin{array}{cc|lll|ccc}
 & & C_{L} & & & & \\
 & &  C_{L-r} & \ddots & & & \\
 & & \vdots & \ddots & C_{L} & & \\
 \multicolumn{2}{c|}{\raisebox{1.5ex}[0pt]{\BigZero}}  & C_{-L+r} & \ddots 
& C_{L-r} &  \multicolumn{2}{c}{\raisebox{1.5ex}[0pt]{\BigZero}} \\
 & & C_{-L} & \ddots & \vdots & & \\
 & & & \ddots & C_{-L+r} & & \\
 & & & & C_{-L} &  &  \\ \hline
 & & & & &  & \\
 \multicolumn{2}{c|}{\raisebox{1.5ex}[0pt]{\BigZero}} & \multicolumn{3}{c|}{\raisebox{1.5ex}[0pt]{\BigZero}} 
& \multicolumn{2}{c}{\raisebox{1.5ex}[0pt]{\BigZero}} \\
\end{array}
\right]=
\begin{array}{c|c|c|c}
 d & n & d-n & \\ \hline
 & & & d+n  \\ \hline
 & & & d-n 
\end{array}, \\
\mathfrak{e}_{2,n}
&= \mathfrak{e}_{2,n}(\mathcal{C}) 
:= 
\left[
\begin{array}{cc|lll}
& & & & \\
 \multicolumn{2}{l|}{\raisebox{1.5ex}[0pt]{\BigZero}} & \multicolumn{3}{c}{\raisebox{1.5ex}[0pt]{\BigZero}}  \\\hline
 & & C_{L} &  & \\
 & & C_{L-r} & \ddots &  \\
 & & \vdots & \ddots & C_{L} \\ 
\multicolumn{1}{c}{\raisebox{1.5ex}[0pt]{\BigZero}} & & C_{-L+r}  & \ddots & C_{L-r} \\
& & C_{-L}  & \ddots & \vdots \\
& & & \ddots & C_{-L+r}  \\ 
& & & & C_{-L}
\end{array}
\right]=
\begin{array}{c|c|c}
2d-n & n & \\ \hline
  & & d-n  \\ \hline
  & & d+n 
\end{array},
\endaligned 
\end{equation*}
where the right-hand sides mean the size of each block of matrices in middle terms 
and $J_n$ is the anti-diagonal matrix of size $n$: 
\begin{equation*}
J_{n}
=\begin{bmatrix}
 & & 1 \\
 & \iddots & \\
1  & & 
\end{bmatrix}.
\end{equation*}
We also define the column vector $\chi$ of length $8d$ by 
\begin{equation*}
\chi = \chi_{8d} ={}^{\rm t}\! \begin{bmatrix} 1& 0 & \cdots & 0 & 1\,\, \end{bmatrix}
\end{equation*}
and 
\begin{equation*}
\mathfrak{J} =
\mathfrak{J}^{(8d)} = 
\begin{bmatrix}
 & I_{4d} \\
I_{4d} & 
\end{bmatrix}. 
\end{equation*}
where $I_{4d}$ is the identity matrix of size $4d$. 
\medskip

Let $\phi_n^{\pm}=\sum_{k=0}^{\infty} u_n^{\pm}(k)X(k) 
+ \sum_{l=-\infty}^{n-1} v_n^{\pm}(l)Y(l-r+1)$ 
be absolutely convergent expansions of 
the solutions of \eqref{eq302} for $0 \leq n \leq d$, 
where it is assumed that $\Im(z)>0$ is large enough.
Using these coefficient of $\phi_n^{\pm}$ and putting  
\begin{equation*} 
v_n^{\pm}(n)=v_n^{\pm}(n+1)\cdots=v_n^{\pm}(d-1)=0
\end{equation*}
if $0 \leq n \leq d-1$, 
we define the column vectors $\Phi_{n}^{\pm}$ of length $8d$ by
\begin{equation} \label{eq303}
\Phi_{n}^{\pm} 
 = 
\begin{bmatrix}
\Phi_{n,1}^{\pm} \\
\Phi_{n,2}^{\pm} \\
J_{2d}\cdot\overline{\Phi_{n,2}^{\pm}} \\
J_{2d}\cdot\overline{\Phi_{n,1}^{\pm}} 
\end{bmatrix}, \quad 
\left\{
\aligned 
\Phi_{n,1}^{\pm} 
& = {}^{\rm t}\!\begin{bmatrix} 
u_n^{\pm}(0) & u_n^{\pm}(1) & \cdots & u_n^\pm(2d-1) 
\end{bmatrix}, \\
\Phi_{n,2}^{\pm} 
& = {}^{\rm t}\!\begin{bmatrix} 
v_n^{\pm} (d-1) & v_n^{\pm}(d-2) &\cdots & v_n^{\pm}(-d) 
\end{bmatrix}.
\endaligned
\right. 
\end{equation}
Substituting the above expansion of $\phi_n^{\pm}$ into \eqref{eq302}, 
we obtain linear equations 
\begin{equation} \label{eq304}
(\mathfrak{E}_0 \pm \mathfrak{E}_n^\sharp) \cdot \Phi_{n}^{\pm} 
 = \mathfrak{E}_0 \cdot \chi \mp \mathfrak{J}\cdot \mathfrak{E}_0 \cdot  \chi \\
\quad (0 \leq n \leq d) 
\end{equation}
by comparing coefficient of $X(k)$ and $Y(l)$ 
for $-(d+r-1)/2 \leq k, l \leq (3d-r-1)/2$, 
and
\begin{equation*} 
\sum_{j=0}^{d} C_{-(L-rj)} u_n^{\pm}(J_+ -j) = 0, \quad  
\sum_{j=0}^{d} C_{-(L-rj)} v_n^{\pm}(J_-+j) = 0 
\end{equation*}
for every $J_+ \geq 2d$ and $J_- \leq -d-1$ 
by comparing other coefficient.  
\begin{lemma} \label{lem305}
Let $0 \leq n \leq d$.  
Then $\det(\mathfrak{E}_0 \pm \mathfrak{E}_n^\sharp)\not=0$ 
if $\mathsf{I} \pm \Theta\mathsf{J}\mathsf{P}_n$ 
is invertible on $V_{t,n}+\Theta\mathsf{J}\mathsf{P}_n V_{t,n}$ 
or equivalently $D_n(\mathcal{C})\not=0$. 
\end{lemma}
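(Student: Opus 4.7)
The plan is to prove the contrapositive: assuming $\det(\mathfrak{E}_0 \pm \mathfrak{E}_n^\sharp) = 0$, I will construct a nonzero $\psi \in W_n := V_{t,n}+\Theta\mathsf{J}\mathsf{P}_n V_{t,n}$ with $(\mathsf{I} \pm \Theta\mathsf{J}\mathsf{P}_n)\psi=0$. The first step is the homogeneous analogue of \eqref{eq304}: repeating the derivation of \eqref{eq304} from \eqref{eq302} verbatim with zero right-hand side shows that a function $\psi = \sum_k u(k)X(k) + \sum_l v(l)Y(l-r+1) \in W_n$ satisfies $(\mathsf{E} \pm \mathsf{E}^\sharp\mathsf{J}\mathsf{P}_n)\psi = 0$ if and only if its coefficients, assembled in the shape \eqref{eq303}, give a vector $\Psi \in \ker(\mathfrak{E}_0 \pm \mathfrak{E}_n^\sharp)$ together with the auxiliary recurrences $\sum_j C_{-(L-rj)}u(J_+-j)=0$ for $J_+\geq 2d$ and $\sum_j C_{-(L-rj)}v(J_-+j)=0$ for $J_- \leq -d-1$. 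Since $\mathsf{E}$ is invertible by Lemma \ref{lem301}, this is the same as $(\mathsf{I} \pm \Theta\mathsf{J}\mathsf{P}_n)\psi = 0$. The equivalence of invertibility with $D_n(\mathcal{C})\not=0$ itself follows by the same reasoning as in the proof of Lemma \ref{lem302} applied with $\lambda = \mp 1$, for which $M_{\mp 1}$ reduces to $L_n^\pm(\mathcal{C})$ up to the column and row sign changes that identify $\det L_n^+=\det L_n^-$.

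Next I would exploit the antilinear involution $\sigma:\Psi\mapsto\mathfrak{J}\,\overline{\Psi}$ on $\C^{8d}$. A direct inspection of the block layouts of $\mathfrak{E}_0$ and $\mathfrak{E}_n^\sharp$ gives $\sigma\,\mathfrak{E}_0 = \mathfrak{E}_0\,\sigma$ and $\sigma\,\mathfrak{E}_n^\sharp = \mathfrak{E}_n^\sharp\,\sigma$: under the top-bottom swap $\mathfrak{J}$ and entrywise conjugation, the lower blocks $^{\rm t}\overline{\mathfrak{e}_0}$ of $\mathfrak{E}_0$ coincide with the upper blocks $\mathfrak{e}_0$, and likewise the lower blocks $J_{2d}\mathfrak{e}_{i,n}J_{2d}$ of $\mathfrak{E}_n^\sharp$ coincide with $\overline{\mathfrak{e}_{i,n}}$ because the anti-diagonal $J_{2d}$ is precisely the row/column reversal that $\mathfrak{J}$ effects on the $\Phi_{n,i}^\pm$-blocks of \eqref{eq303}. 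Hence $\ker(\mathfrak{E}_0 \pm \mathfrak{E}_n^\sharp)$ is a $\sigma$-stable real subspace, and any nonzero element $\Psi_0$ yields a nonzero $\sigma$-fixed element via $\Psi_0+\sigma\Psi_0$ or $i(\Psi_0-\sigma\Psi_0)$; by definition of $\sigma$, every $\sigma$-fixed vector has exactly the shape \eqref{eq303}.

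From such a $\sigma$-fixed $\Psi\in\ker(\mathfrak{E}_0 \pm \mathfrak{E}_n^\sharp)$ I read off the finite coefficients $u(0),\dots,u(2d-1)$ and $v(-d),\dots,v(n-1)$ (with $v(n)=\cdots=v(d-1)=0$) and extend to all remaining indices by the auxiliary recurrences, solved forward in $J_+$ and backward in $J_-$ using $C_{-L}\not=0$. The extended sequences satisfy a linear recurrence whose characteristic polynomial is essentially $f_\mathcal{C}$, so the resulting series $\psi$ converges for $\Im(z)$ large in precisely the same sense as the expansion of $E_0^\sharp/E_0$ invoked above \eqref{eq304}, placing $\psi$ in $W_n$. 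By construction $\psi \not= 0$ and $(\mathsf{I} \pm \Theta\mathsf{J}\mathsf{P}_n)\psi = 0$, contradicting invertibility.

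The hardest step will be the $\sigma$-equivariance check in the second paragraph: the precise interplay between $\mathfrak{J}$, the anti-diagonal $J_{2d}$, and entrywise conjugation across all four blocks of $\mathfrak{E}_n^\sharp$ needs to be tracked carefully, and the bookkeeping for the different subscripts $i=1,2$ and the two signs $\pm$ is where mistakes are easiest. A secondary but routine point is verifying that the coefficient sequences produced by the recurrences genuinely assemble into an element of $W_n$ rather than a merely formal series, but this is inherent in the setup used throughout Section \ref{section_2}.
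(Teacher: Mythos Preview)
Your contrapositive strategy is reasonable, but the involution you wrote down is wrong, and this is exactly the step you flagged as delicate. For $\Psi={}^{\rm t}[\,a\ b\ c\ d\,]$ in $2d$-blocks, the map $\sigma:\Psi\mapsto\mathfrak{J}\,\overline{\Psi}$ gives $\sigma$-fixedness $c=\overline{a}$, $d=\overline{b}$, whereas the shape \eqref{eq303} requires $c=J_{2d}\overline{b}$ and $d=J_{2d}\overline{a}$. Nor does $\sigma$ commute with $\mathfrak{E}_0$: computing $\mathfrak{J}\,\overline{\mathfrak{E}_0}$ versus $\mathfrak{E}_0\,\mathfrak{J}$ one needs ${}^{\rm t}\mathfrak{e}_0=\mathfrak{e}_0$, which fails since $\mathfrak{e}_0$ is strictly lower triangular. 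The involution you want is $\tau:\Psi\mapsto J_{8d}\,\overline{\Psi}$ with the full $8d\times 8d$ anti-diagonal. Then the Toeplitz identity $J_{2d}\,{}^{\rm t}\mathfrak{e}_0\,J_{2d}=\mathfrak{e}_0$ gives $\tau\,\mathfrak{E}_0=\mathfrak{E}_0\,\tau$, and the built-in $J_{2d}$-conjugates in the lower blocks of $\mathfrak{E}_n^\sharp$ give $\tau\,\mathfrak{E}_n^\sharp=\mathfrak{E}_n^\sharp\,\tau$; with this correction your second paragraph goes through. Your third paragraph also needs tightening: mere convergence for large $\Im z$ does not by itself place $\psi$ in $W_n$. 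The cleanest fix is to observe that once the formal identity $\mathsf{E}\psi=\mp\mathsf{E}^\sharp\mathsf{J}\mathsf{P}_n\psi$ holds, the right-hand side depends only on the finitely many coefficients seen by $\mathsf{P}_n$, so $\psi=\mp\Theta\mathsf{J}\mathsf{P}_n(\text{finite sum})\in\Theta\mathsf{J}\mathsf{P}_nV_t\subset W_n$ directly.

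The paper takes a completely different and much shorter route: it expands $\det(\mathfrak{E}_0\pm\mathfrak{E}_n^\sharp)$ by repeated cofactor expansion along the columns in which the only nonzero entries are $C_{-L}$ or $\overline{C_{-L}}$, obtaining the closed formula
\[
\det(\mathfrak{E}_0\pm\mathfrak{E}_n^\sharp)=|C_{-L}|^{4(2d-n)}\,D_n(\mathcal{C})^2,
\]
from which the lemma is immediate since $C_{-L}\neq 0$. This gives strictly more than your argument would (an exact value, not just nonvanishing) and avoids the operator-theoretic detour entirely. Your approach, once repaired, has the virtue of making the correspondence between $\ker(\mathfrak{E}_0\pm\mathfrak{E}_n^\sharp)$ and $\ker(\mathsf{I}\pm\Theta\mathsf{J}\mathsf{P}_n)$ explicit, but for the purposes of this lemma the paper's computation is both simpler and more informative.
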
 
\begin{proof} 
Let 
\[
\aligned 
\mathbf{k} & =(n+1,n+2,\dots,2d; 2d+n+1,2d+n+2,\dots,4d; \\
& \qquad \qquad \qquad \qquad 
4d+1, 4d+2, \dots 6d-n; 6d+1, 6d+2, \dots, 8d-n) 
\endaligned 
\]
be a list of indices of columns of 
$\mathfrak{E}_0 \pm \mathfrak{E}_n^\sharp$ and let  
\[
\aligned 
\mathbf{k}_1=(2d; 4d; 4d+1; 6d+1), ~
\mathbf{k}_2=(2d-1; 4d-1; 4d+2; 6d+2),~\dots
\endaligned 
\]
be sublists of $\mathbf{k}$. 
Expanding $\det(\mathfrak{E}_0 \pm \mathfrak{E}_n^\sharp)$ 
with respect to columns with indices $\mathbf{k}_1$, $\mathbf{k}_2$, $\dots$ in this order, 
we have
\[
\det(\mathfrak{E}_0 \pm \mathfrak{E}_n^\sharp)
= |C_{-L}|^{4(2d -n)}D_n(\mathcal{C})^2
\]
Therefore, we obtain the conclusion by Lemma \ref{lem302}, 
since $C_{-L}\not=0$ by assumption. 
\end{proof}

On the other hand, by \eqref{eq302}, we have
\[
\aligned
{\mathsf E} \, \phi_{n}^{\pm} 
&= {\mathsf E}X(0) \mp {\mathsf E}^{\sharp} Y(0) 
\mp {\mathsf E}^{\sharp} {\mathsf J}{\mathsf P}_n  \phi_{n}^{\pm} \\
&= \sum_{j=0}^{d} C_{L-rj} X((d-r+1)/2-j) \mp 
\sum_{j=0}^{d} \overline{C_{L-rj}}  Y((d-r+1)/2-j) \\
& \quad \mp 
\sum_{j=0}^{d}\sum_{l=0}^{n-1} \overline{C_{L-rj}} \overline{v_n^\pm(l)} X(l-(d+r-1)/2+j) \\
& \quad \mp 
\sum_{j=0}^{d}\sum_{k=0}^{n-1} \overline{C_{L-rj}} \overline{u_n^\pm(k)} Y(k+(d-r+1)/2-j) . 
\endaligned
\]
Therefore, we can write 
\begin{equation} \label{eq305}
{\mathsf E}(\phi_{n}^{\pm} + X(0))  
 =\sum_{k=-(d+r-1)/2}^{(d-r+1)/2+n-1}\Bigl( p_n^{\pm}(k)X(k) + q_n^{\pm}(k)Y(k) \Bigr) \quad 
\end{equation}
for some complex numbers $p_n^{\pm}(k)$ and $q_n^{\pm}(k)$. 
Hence 
$({\mathsf E}(\phi_{n}^{\pm}+X(0)))(t,z)$ extend to smooth functions of $t$ on $\R$ 
by the right-hand side of \eqref{eq305}. 
We use the same notation for such extended functions. 

We put $p_n^{\pm}(k)=q_n^{\pm}(k)=0$ for every $d+n \leq k \leq 2d-1$ if $0 \leq n \leq d-1$ and 
define the column vectors $\Psi_n^{\pm}$ of length $8d$ by
\[
\Psi_{n}^{\pm} 
= 
\begin{bmatrix}
\Psi_{n,1}^{\pm} \\
\Psi_{n,2}^{\pm} \\
J_{2d} \cdot \overline{\Psi_{n,2}^{\pm}} \\
J_{2d} \cdot \overline{\Psi_{n,1}^{\pm}}
\end{bmatrix}, \quad 
\left\{
\aligned
\Psi_{n,1}^{\pm} 
&={}^{\rm t}\!
\begin{bmatrix} 
p_n^{\pm}(-\tfrac{d+r-1}{2}) & p_n^{\pm}(-\tfrac{d+r-3}{2}) & 
\cdots & p_n^{\pm}(\tfrac{3d-r-1}{2}) 
\end{bmatrix}, \\
\Psi_{n,2}^{\pm} 
&={}^{\rm t}\!
\begin{bmatrix} 
q_n^{\pm}(\tfrac{3d-r-1}{2}) & q_n^{\pm}(\tfrac{3d-r-3}{2}) & 
\cdots & q_n^\pm(-\tfrac{d+r-1}{2})
\end{bmatrix}.
\endaligned
\right.
\]
Then we have
\begin{equation} \label{eq306}
\Psi_n^{\pm} = \mathfrak{E}_{0}(\Phi_n^{\pm} + \chi) 
\end{equation}
by comparing the right-hand sides of \eqref{eq305} with 
\[
\aligned
{\mathsf E}(\phi_{n}^{\pm} + X(0))
&= \sum_{j=0}^{d}  \sum_{k=0}^{\infty} C_{L-rj} u_n^\pm(k) X(k+(d-r+1)/2-j) \\
& \quad 
+ \sum_{j=0}^{d} \sum_{l=-\infty}^{n-1} C_{L-rj}v_n^\pm(l) Y(l-(d+r-1)/2+j) \\
& \quad 
+ \sum_{j=0}^{d}  C_{L-rj} X((d-r+1)/2-j), 
\endaligned
\]
which is obtained by a direct calculation of the left-hand side 
with the expansion $\phi_n^{\pm}=\sum_{k=0}^{\infty} u_n^{\pm}(k)X(k) 
+ \sum_{l=-\infty}^{n-1} v_n^{\pm}(l)Y(l-r+1)$. 

\begin{lemma} \label{lem306} 
Let $0 \leq n \leq d$. Then 
$p_n^{\pm}(k) \pm \overline{q_n^{\pm}(k)}=0$
if $(d-r+1)/2+1 \leq k \leq (d-r+1)/2+n-1$ or $-(d+r-1)/2 \leq k \leq -(d+r-1)/2+n-1$, 
and $p_n^{\pm}((d-r+1)/2) \pm \overline{q_n^{\pm}((d-r+1)/2)}=2C_{L}$. 
Therefore, 
\begin{equation} \label{eq307}
\aligned 
(\mathsf{I} & \pm \mathsf{J}){\mathsf E} (\phi_{n}^{\pm}+X(0)) \\
& \quad = 
\sum_{k=-(d+r-1)/2+n}^{(d-r+1)/2}\Bigl( 
(p_n^{\pm}(k)\pm \overline{q_n^{\pm}(k)})X(k) 
\pm  
(\overline{p_n^{\pm}(k)} \pm q_n^{\pm}(k))Y(k) \Bigr).
\endaligned 
\end{equation}
\end{lemma}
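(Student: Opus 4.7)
The plan is to first observe the action of $\mathsf{J}$ on \eqref{eq305}. Because $\mathsf{J}$ is anti-linear with $\mathsf{J}X(k)=Y(k)$ and $\mathsf{J}Y(k)=X(k)$,
\[
\mathsf{J}\mathsf{E}(\phi_n^\pm+X(0))=\sum_{k}\bigl(\overline{q_n^\pm(k)}\,X(k)+\overline{p_n^\pm(k)}\,Y(k)\bigr),
\]
so the coefficient of $X(k)$ in $(\mathsf{I}\pm\mathsf{J})\mathsf{E}(\phi_n^\pm+X(0))$ equals $p_n^\pm(k)\pm\overline{q_n^\pm(k)}$, while that of $Y(k)$ is its complex conjugate multiplied by $\pm1$. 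Thus \eqref{eq307} is equivalent to the assertions about $p_n^\pm(k)\pm\overline{q_n^\pm(k)}$; the range in \eqref{eq307} is precisely the complement, inside the range appearing in \eqref{eq305}, of the two intervals where vanishing is claimed.

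To establish those assertions I will match coefficients on both sides of the defining equation \eqref{eq302} at specific indices. With $\kappa=(d-r+1)/2$, the shift formulas give
\[
\mathsf{E}X(k')=\sum_{j=0}^{d}C_{L-rj}\,X(k'+\kappa-j),\qquad
\mathsf{E}^\sharp X(k')=\sum_{j=0}^{d}\overline{C_{L-rj}}\,X(k'-\kappa+j),
\]
with analogous formulas for $Y$. For $k=\kappa+m$ with $m\in[1,n-1]$ the right-hand side $\mathsf{E}X(0)\mp\mathsf{E}^\sharp Y(0)$ has no $X$-coefficient (the $X$-part of $\mathsf{E}X(0)$ reaches only up to index $\kappa$), while the $X$-coefficient of the left-hand side $(\mathsf{E}\pm\mathsf{E}^\sharp\mathsf{J}\mathsf{P}_n)\phi_n^\pm$ at this index evaluates to
\[
\sum_{j=0}^{d}C_{L-rj}\,u_n^\pm(m+j)\ \pm\ \sum_{l=m}^{n-1}\overline{C_{-L+r(l-m)}}\,\overline{v_n^\pm(l)},
\]
which coincides with $p_n^\pm(\kappa+m)\pm\overline{q_n^\pm(\kappa+m)}$; hence \eqref{eq302} forces this combination to vanish. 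At $k=\kappa$ the same bookkeeping picks up a single extra contribution $C_L$ from the $X$-coefficient of $\mathsf{E}X(0)$ at $X(\kappa)$, yielding the value $2C_L$.

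The lower range is treated dually: at $k=-(d+r-1)/2+m$ with $m\in[0,n-1]$ I will extract the $Y$-coefficient from both sides of \eqref{eq302}. The right-hand side now contributes $\mp\overline{C_{-L+rm}}$ through $\mp\mathsf{E}^\sharp Y(0)$, and the equation expresses $q_n^\pm(k)$ in terms of the $\overline{u_n^\pm(k')}$; taking the complex conjugate and combining with the direct expansion of $p_n^\pm(k)$ (which picks up $C_{-L+rm}$ from $\mathsf{E}X(0)$) again produces $p_n^\pm(k)\pm\overline{q_n^\pm(k)}=0$. The main obstacle will be the careful index bookkeeping, especially keeping track of which of the bounds $j\in[0,d]$, $k'\geq 0$, $l\leq n-1$, and $k',l\in[0,n-1]$ are active at each $k$; however, the parity case $r\in\{1,2\}$ disappears once every index is written in terms of $\kappa$ and $(d+r-1)/2$.
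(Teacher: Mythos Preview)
Your argument is correct. The key identity you use---that the $X(k)$-coefficient of $(\mathsf{E}\pm\mathsf{E}^\sharp\mathsf{J}\mathsf{P}_n)\phi_n^\pm$ at $k=\kappa+m$ (upper range) and the $Y(k)$-coefficient at $k=-\kappa'+m$ (lower range) coincide with $p_n^\pm(k)\pm\overline{q_n^\pm(k)}$ up to the stray $C_L$ or $\overline{C_{-L+rm}}$ terms---holds precisely because, for the indices in question, the truncation $\mathsf{P}_n$ does not remove any of the $v_n^\pm(l)$ (upper range) or $u_n^\pm(k')$ (lower range) that actually contribute to $q_n^\pm(k)$ or $p_n^\pm(k)$. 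This is the heart of the lemma, and you have located it correctly.

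The paper proves the same statement through its $8d\times 8d$ matrix apparatus: it encodes \eqref{eq302} as \eqref{eq308}, computes $(I_{8d}\pm\mathfrak{J})\mathfrak{E}_0(\Phi_n^\pm+\chi)$ to obtain the vector $R_n^\pm$ of the quantities $p_n^\pm(k)\pm\overline{q_n^\pm(k)}$, and then observes that certain rows of the coefficient matrix in \eqref{eq308} agree with the corresponding rows of an auxiliary matrix $M_\pm$ in \eqref{eq309}; comparing those rows gives the claimed vanishing and the value $2C_L$. Your approach is the same comparison carried out directly on the exponential series, bypassing the block-matrix bookkeeping entirely. The advantage of your route is transparency: one sees immediately which terms survive and why $\mathsf{P}_n$ is harmless at the relevant indices. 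The paper's matrix formulation, on the other hand, is set up once and then reused for several later lemmas (e.g.\ the computations leading to Lemma~\ref{lem307} and Proposition~\ref{prop312}), so its overhead is amortized across the section.
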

\begin{proof} 
Linear equations \eqref{eq304}
are equivalent to 
$(\mathfrak{E}_0 \pm \mathfrak{E}_n^{\sharp}) 
(\Phi_n^{\pm}+\chi) = 
2 \mathfrak{E}_0 \cdot \chi$. 
They are written in matrix forms as 
\begin{equation} \label{eq308}
\scalebox{0.925}{$
\aligned 
\, & 
\begin{bmatrix}
\mathfrak{e}_0 & & \pm \overline{\mathfrak{e}_{2,n}} & \\
& \mathfrak{e}_0 & & \pm \overline{\mathfrak{e}_{1,n}} \\
\pm J_{2d}\cdot\mathfrak{e}_{1,n}\cdot J_{2d} & & {}^{\rm t}\overline{\mathfrak{e}_0} & \\
& \pm J_{2d}\cdot\mathfrak{e}_{2,n}\cdot J_{2d} & & {}^{\rm t}\overline{\mathfrak{e}_0} 
\end{bmatrix}
(\Phi_n^\pm + \chi)
= 2
\begin{bmatrix}
\mathfrak{e}_0 & &  & \\
& \mathfrak{e}_0 & &  \\
& & {}^{\rm t}\overline{\mathfrak{e}_0} & \\
& & & {}^{\rm t}\overline{\mathfrak{e}_0} 
\end{bmatrix} \! \chi
\endaligned 
$}
\end{equation}
by definition of matrices. On the other hand, we have
\[
\scalebox{0.925}{$
\begin{bmatrix}
\mathfrak{e}_0 & & \pm {}^{\rm t}\overline{\mathfrak{e}_0} & \\
& \mathfrak{e}_0 & & \pm {}^{\rm t}\overline{\mathfrak{e}_0} \\
\pm \mathfrak{e}_{0} & & {}^{\rm t}\overline{\mathfrak{e}_0} & \\
& \pm \mathfrak{e}_{0} & & {}^{\rm t}\overline{\mathfrak{e}_0} 
\end{bmatrix}
 (\Phi_n^{\pm}+\chi)
 = 
\begin{bmatrix} 
R_n^\pm \\
\pm J_{2d}\cdot \overline{R_n^\pm} \\
\pm R_n^\pm \\
J_{2d}\cdot \overline{R_n^\pm} 
\end{bmatrix}, 
\quad 
R_n^\pm
= \begin{bmatrix} 
p_n^{\pm}(-\tfrac{d+r-1}{2}) \pm \overline{q_n^{\pm}(-\tfrac{d+r-1}{2})} \\ 
p_n^{\pm}(-\tfrac{d+r-3}{2}) \pm \overline{q_n^{\pm}(-\tfrac{d+r-3}{2})}  \\ \vdots \\ 
p_n^{\pm}(\tfrac{3d-r-1}{2}) \pm \overline{q_n^{\pm}(\tfrac{3d-r-1}{2})} \\ 
\end{bmatrix},
$}
\]
since 
$\displaystyle{
(I_{8d} \pm \mathfrak{J}) \cdot \mathfrak{E}_0 
=
\begin{bmatrix}
\mathfrak{e}_0 & & \pm {}^{\rm t}\overline{\mathfrak{e}_0} & \\
& \mathfrak{e}_0 & & \pm {}^{\rm t}\overline{\mathfrak{e}_0} \\
\pm \mathfrak{e}_{0} & & {}^{\rm t}\overline{\mathfrak{e}_0} & \\
& \pm \mathfrak{e}_{0} & & {}^{\rm t}\overline{\mathfrak{e}_0} 
\end{bmatrix}
}$ 
by definition of $\mathfrak{E}_{0}$ and 
\[
(I_{8d} \pm \mathfrak{J})\mathfrak{E}_0 (\Phi_n^{\pm}+\chi)
=
(I_{8d} \pm \mathfrak{J}) \Psi_n^\pm
=
\begin{bmatrix} 
R_n^\pm \\
\pm J_{2d}\cdot \overline{R_n^\pm} \\
\pm R_n^\pm \\
J_{2d}\cdot \overline{R_n^\pm} 
\end{bmatrix}
\] 
by \eqref{eq306}. In addition, 
\begin{equation} 
\scalebox{0.95}{$
\begin{bmatrix}
\mathfrak{e}_0 & & \pm {}^{\rm t}\overline{\mathfrak{e}_0} & \\
& \mathfrak{e}_0 & & \pm {}^{\rm t}\overline{\mathfrak{e}_0} \\
\pm \mathfrak{e}_{0} & & {}^{\rm t}\overline{\mathfrak{e}_0} & \\
& \pm \mathfrak{e}_{0} & & {}^{\rm t}\overline{\mathfrak{e}_0} 
\end{bmatrix}
 (\Phi_n^{\pm}+\chi) 
 =  
\begin{bmatrix}
\mathfrak{e}_0 & & \pm J_{2d}\cdot {}^{\rm t}\overline{\mathfrak{e}_{0,n}}\cdot J_{2d} & \\
& \mathfrak{e}_{0,n} & & \pm {}^{\rm t}\overline{\mathfrak{e}_0} \\
\pm \mathfrak{e}_{0} & & J_{2d}\cdot {}^{\rm t}\overline{\mathfrak{e}_{0,n}}\cdot J_{2d} & \\
& \pm \mathfrak{e}_{0,n} & & {}^{\rm t}\overline{\mathfrak{e}_0} 
\end{bmatrix}
 (\Phi_n^{\pm}+\chi), 
$}
\end{equation}
where 
$\mathfrak{e}_{0,n}$ on the right-hand side is obtained by replacing $n$ columns 
from the left of $\mathfrak{e}_{0}$ with zero columns, 
since $v_n^{\pm}(n)=v_n^{\pm}(n+1)\cdots=v_n^{\pm}(d-1)=0$ for $1 \leq n \leq d-1$ 
by definition of $\Phi_n^\pm$. 
Therefore, 
\begin{equation} \label{eq309}
\scalebox{0.925}{$
M_\pm
 (\Phi_n^{\pm}+\chi_{8d})
= 
\begin{bmatrix} 
R_n^\pm \\
\pm J_{2d}\cdot \overline{R_n^\pm} \\
\pm R_n^\pm \\
J_{2d}\cdot \overline{R_n^\pm} 
\end{bmatrix} 
~\text{with}~\,\,
M_\pm^\prime = \begin{bmatrix}
\mathfrak{e}_0 & & \mp J_{2d}\cdot {}^{\rm t}\overline{\mathfrak{e}_{0,n}}\cdot J_{2d} & \\
& \mathfrak{e}_{0,n} & & \mp {}^{\rm t}\overline{\mathfrak{e}_0} \\
\mp \mathfrak{e}_{0} & & J_{2d}\cdot {}^{\rm t}\overline{\mathfrak{e}_{0,n}}\cdot J_{2d} & \\
& \mp \mathfrak{e}_{0,n} & & {}^{\rm t}\overline{\mathfrak{e}_0} 
\end{bmatrix}. 
$}
\end{equation}
Here we find that $d$ rows of both $\mathfrak{E}_0 \pm \mathfrak{E}_n^{\sharp}$ 
and $M_\pm$ with indices $(d+1, d+2, \cdots, 2d)$ 
and $n$ rows of both $\mathfrak{E}_0 \pm \mathfrak{E}_n^{\sharp}$ and 
$M_\pm$ with indices $(4d-n+1, 4d-n+2,\cdots, 4d)$ have the same entries. 
Therefore, by comparing $d$ rows of \eqref{eq308} and \eqref{eq309} 
with indices $(d+1, d+2,\cdots, 2d)$, 
we obtain 
$p_n^{\pm}((d-r+1)/2) \pm \overline{q_n^{\pm}((d-r+1)/2)} = 2C_{L}$ 
and 
$p_n^{\pm}(k) \pm \overline{q_n^{\pm}(k)} = 0$ 
for $(d-r+1)/2+1 \leq k \leq (d-r+1)/2+d-1=(3d-r-1)/2$.
Similarly, by comparing $n$ rows of \eqref{eq308} and \eqref{eq309} 
with indices $(4d-n+1, 4d-n+2, \cdots, 4d)$, 
we obtain
$p_n^{\pm}(k) \pm \overline{q_n^{\pm}(k)} = 0$ 
for $-(d+r-1)/2 \leq k \leq -(d+r-1)/2+n-1$. 
\end{proof}

\begin{lemma} \label{lem307} We have 
\begin{equation} 
\aligned 
u_n^{+}(k)  = u_n^{-}(k) \quad (0 \leq k \leq 2d-1), \quad 
v_n^{+}(k)  = -v_n^{-}(k) \quad (-d \leq k \leq d-1)
\endaligned
\end{equation}
for every $0 \leq n \leq d$. 
\end{lemma}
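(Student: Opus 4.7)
The plan is to work with the matrix equations \eqref{eq304} and prove the stronger vector identity $\Phi_n^+ = \Sigma\,\Phi_n^-$, where $\Sigma$ is the block-diagonal sign matrix
\[
\Sigma := \mathrm{diag}(I_{2d},\,-I_{2d},\,-I_{2d},\,I_{2d})
\]
acting on $\C^{8d}$. Applying $\Sigma$ to a vector of the structural form \eqref{eq303} again produces a vector of the same form, and the identity $\Phi_n^+ = \Sigma\Phi_n^-$ reads $\Phi_{n,1}^+ = \Phi_{n,1}^-$ and $\Phi_{n,2}^+ = -\Phi_{n,2}^-$, which is exactly the claim $u_n^+(k) = u_n^-(k)$ for $0 \leq k \leq 2d-1$ and $v_n^+(l) = -v_n^-(l)$ for $-d \leq l \leq d-1$.

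The first step is to establish two algebraic identities of the defining matrices:
\[
\Sigma\,\mathfrak{E}_0 = \mathfrak{E}_0\,\Sigma, \qquad \Sigma\,\mathfrak{E}_n^\sharp = -\mathfrak{E}_n^\sharp\,\Sigma.
\]
The first is immediate because $\mathfrak{E}_0$ is block-diagonal and $\Sigma$ is a scalar-on-each-block matrix. The second follows by inspection of the block layout of $\mathfrak{E}_n^\sharp$: its only nonzero blocks occupy positions $(1,3)$, $(2,4)$, $(3,1)$, $(4,2)$, and in each of these positions the diagonal signs of $\Sigma$ on the row and on the column are opposite (one is $+$, the other $-$), so conjugating by $\Sigma$ introduces a single sign flip in every nonzero block.

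Next I would check how the right-hand side of \eqref{eq304} transforms. Since $\chi$ is supported only at positions $1$ and $8d$, the vector $\mathfrak{E}_0\chi$ has entries only in blocks $1$ and $4$, both corresponding to $+1$ in $\Sigma$, so $\Sigma\mathfrak{E}_0\chi = \mathfrak{E}_0\chi$. Applying $\mathfrak{J}$ moves those entries into blocks $2$ and $3$, both corresponding to $-1$ in $\Sigma$, hence $\Sigma\mathfrak{J}\mathfrak{E}_0\chi = -\mathfrak{J}\mathfrak{E}_0\chi$. Therefore $\Sigma(\mathfrak{E}_0\chi + \mathfrak{J}\mathfrak{E}_0\chi) = \mathfrak{E}_0\chi - \mathfrak{J}\mathfrak{E}_0\chi$.

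Multiplying the $-$ case of \eqref{eq304} by $\Sigma$ on the left and using the relations above, the left-hand side becomes
\[
\Sigma(\mathfrak{E}_0 - \mathfrak{E}_n^\sharp)\Phi_n^- = (\mathfrak{E}_0 + \mathfrak{E}_n^\sharp)\,\Sigma\Phi_n^-,
\]
while the right-hand side becomes $\mathfrak{E}_0\chi - \mathfrak{J}\mathfrak{E}_0\chi$. So $\Sigma\Phi_n^-$ satisfies precisely the equation defining $\Phi_n^+$. By Lemma~\ref{lem305}, $\mathfrak{E}_0 + \mathfrak{E}_n^\sharp$ is invertible, so this solution is unique, giving $\Phi_n^+ = \Sigma\Phi_n^-$ and hence the lemma. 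I expect no serious obstacle; the argument is a symmetry check once the correct involution $\Sigma$ is identified, and the only point needing careful bookkeeping is the block-by-block verification of the anti-commutation $\Sigma\mathfrak{E}_n^\sharp = -\mathfrak{E}_n^\sharp\Sigma$ from the definitions of $\mathfrak{e}_{1,n}$ and $\mathfrak{e}_{2,n}$ and their placements inside $\mathfrak{E}_n^\sharp$.
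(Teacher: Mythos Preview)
Your argument is correct and is genuinely different from the paper's. The paper proceeds by explicitly inverting $M_\pm = \mathfrak{E}_0 \pm \mathfrak{E}_n^\sharp$ via the $2\times 2$-block Schur complement formula, obtaining a $4\times 4$-block expression for $\Phi_n^\pm + \chi = 2M_\pm^{-1}\mathfrak{E}_0\chi$ in which the diagonal blocks are independent of the sign and the off-diagonal blocks carry $\pm$; the conclusion then follows because $\mathfrak{E}_0\chi$ is supported only in blocks~1 and~4. Your symmetry argument via the involution $\Sigma=\mathrm{diag}(I_{2d},-I_{2d},-I_{2d},I_{2d})$ encodes exactly the same mechanism --- the commutation $\Sigma\mathfrak{E}_0=\mathfrak{E}_0\Sigma$ and anti-commutation $\Sigma\mathfrak{E}_n^\sharp=-\mathfrak{E}_n^\sharp\Sigma$ are precisely why the diagonal blocks of the inverse are sign-stable and the off-diagonal ones flip --- but packages it as a single conjugation $\Sigma M_-\Sigma = M_+$ rather than an explicit inversion. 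Your route is shorter and more conceptual, avoiding the Schur complement bookkeeping entirely; the paper's computation is more mechanical but does not require spotting the correct involution in advance.
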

\begin{proof} 
We have $\Phi_n^{\pm} + \chi
=2M_\pm^{-1}\cdot \mathfrak{E}_0\cdot\chi$ with 
\[
M_\pm=
\mathfrak{E}_0 \pm \mathfrak{E}_n^\sharp
= 
\begin{bmatrix}
\mathfrak{e}_0 & & \pm \overline{\mathfrak{e}_{2,n}} & \\
& \mathfrak{e}_0 & & \pm \overline{\mathfrak{e}_{1,n}} \\
\pm J_{2d}\cdot\mathfrak{e}_{1,n}\cdot J_{2d} & & {}^{\rm t}\overline{\mathfrak{e}_0} & \\
& \pm J_{2d}\cdot\mathfrak{e}_{2,n}\cdot J_{2d} & & {}^{\rm t}\overline{\mathfrak{e}_0} 
\end{bmatrix}
\]
by \eqref{eq308}. Put 
\[
A=
\begin{bmatrix} 
\mathfrak{e}_0 & \\
& \mathfrak{e}_0
\end{bmatrix}, \quad 
B=
\begin{bmatrix} 
\pm \overline{\mathfrak{e}_{2,n}} & \\
& \pm \overline{\mathfrak{e}_{2,n}}
\end{bmatrix}, 
\]
\[
C=
\begin{bmatrix} 
\pm J_{2d}\cdot\mathfrak{e}_{1,n}\cdot J_{2d} & \\
& \pm J_{2d}\cdot\mathfrak{e}_{2,n}\cdot J_{2d}
\end{bmatrix}, \quad 
D=
\begin{bmatrix} 
{}^{\rm t}\overline{\mathfrak{e}_0} & \\
& {}^{\rm t}\overline{\mathfrak{e}_0}
\end{bmatrix}. 
\]
Then $\det A=C_{-L}^{2d}\not=0$ (resp. $\det D=\overline{C_{-L}}^{2d}\not=0$) by assumption. 
Therefore the identity for the Schur complement 
$\det M^\pm=\det A\det(D-CA^{-1}B)$ 
(resp. $\det M^\pm=\det D\det(A-BD^{-1}C)$)
shows that $\det(D-CA^{-1}B)\not=0$ (resp. $\det(A-BD^{-1}C)\not=0$), 
since $M^\pm$ are invertible. Also, 
$A-BD^{-1}C$ and $D-CA^{-1}B$ are block-diagonal matrices, 
and thus, their inverse matrices are also block-diagonal. 
Therefore, applying the inversion formula for block matrices  
(\cite[Lemma 3.2]{Su18}) to $M^\pm$, 
we obtain 
\[
\aligned 
\Phi_n^{\pm} + \chi
&= 
\begin{bmatrix}
(A-BD^{-1}C)^{-1}& \mp A^{-1}B(D-CA^{-1}B)^{-1} \\ 
\mp D^{-1}C(A-BD^{-1}C)^{-1} & (D-CA^{-1}B)^{-1}
\end{bmatrix}
\cdot \mathfrak{E}_0\cdot\chi \\
&= 
\begin{bmatrix}
A_{11} & O & \pm A_{13} & O \\ 
O & A_{22} & O & \pm A_{24} \\ 
\pm A_{31} & O & A_{33} & O \\ 
O &\pm A_{42} & O & A_{44} \\ 
\end{bmatrix}
\cdot \mathfrak{E}_0\cdot\chi,
\endaligned 
\]
where $A_{ij}$ are some square matrices of size $2d$. 
Recalling definition \eqref{eq303} of $\Phi_n^\pm$, 
this establishes Lemma \ref{lem307}, since 
all $4d$ entries of the column vector $\mathfrak{E}_0\cdot\chi$ 
with indices $(2d+1,2d+2,\cdots,6d)$ are zero. 
\end{proof}
\begin{lemma} \label{lem308} 
We have 
\begin{equation*} 
p_n^{+}(k) + \overline{q_n^{+}(k)} = p_n^{-}(k) - \overline{q_n^{-}(k)} \quad
 \left(- \frac{d+r-1}{2} \leq k \leq \frac{d-r+1}{2}+n-1\right). 
\end{equation*}
for every $0 \leq n \leq d$, 
where $p_n^{\pm}(k) \pm \overline{q_n^{\pm}(k)}=0$ if 
$-(d+r-1)/2 \leq k \leq -(d+r-1)/2+n-1$ 
or $(d-r+1)/2+1 \leq k \leq (d-r+1)/2+n-1$ 
by Lemma \ref{lem306}.
\end{lemma}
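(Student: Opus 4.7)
The plan is to exploit the identities from Lemma~\ref{lem307} by tracing them through the defining relation~\eqref{eq305}, which expresses $p_n^{\pm}(k)$ and $q_n^{\pm}(k)$ as linear combinations of the coefficients $\{u_n^{\pm}(k')\}$ and $\{v_n^{\pm}(l)\}$ of $\phi_n^{\pm}$, together with a $\pm$-independent contribution coming from the extra summand $\mathsf{E}X(0)$.

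First, I would expand $\mathsf{E}(\phi_n^{\pm}+X(0))$ exactly as in the display preceding \eqref{eq306} and, by comparing coefficients of $X(k)$ and $Y(k)$ on both sides, read off explicit convolution-type formulas of the schematic form
\begin{equation*}
p_n^{\pm}(k) \;=\; \sum_{j=0}^{d} C_{L-rj}\, u_n^{\pm}\!\bigl(k+j-(d-r+1)/2\bigr) + c(k),
\end{equation*}
\begin{equation*}
q_n^{\pm}(k) \;=\; \sum_{j=0}^{d} C_{L-rj}\, v_n^{\pm}\!\bigl(k+(d+r-1)/2-j\bigr),
\end{equation*}
with the convention that $u_n^{\pm}(k')=0$ for $k'<0$ and $v_n^{\pm}(l)=0$ for $l\geq n$, and where $c(k)$ is the contribution of $\mathsf{E}X(0)$ and is manifestly independent of the sign. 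The crucial structural observation is that $p_n^{\pm}(k)$ depends only on the $u$-coefficients of $\phi_n^{\pm}$, and $q_n^{\pm}(k)$ depends only on the $v$-coefficients.

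Second, I would do a bookkeeping check of index ranges. For $k$ in the stated interval and $0\leq j\leq d$, the shifted index $k+j-(d-r+1)/2$ lies in $[0,\,2d-1]$ (using $n\leq d$ together with the vanishing convention on negative indices), so Lemma~\ref{lem307} yields $u_n^{+}(k')=u_n^{-}(k')$ for every index $k'$ appearing in the formula for $p_n^{\pm}(k)$. Likewise, the shifted index $k+(d+r-1)/2-j$ lies in $[-d,\,d-1]$, and Lemma~\ref{lem307} gives $v_n^{+}(l)=-v_n^{-}(l)$ for every relevant $l$. Consequently, $p_n^{+}(k)=p_n^{-}(k)$ and $q_n^{+}(k)=-q_n^{-}(k)$ throughout the stated range, so
\begin{equation*}
p_n^{+}(k)+\overline{q_n^{+}(k)} \;=\; p_n^{-}(k)+\overline{-q_n^{-}(k)} \;=\; p_n^{-}(k)-\overline{q_n^{-}(k)},
\end{equation*}
which is exactly the asserted identity.

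I do not expect a substantive obstacle: once the explicit formulas for $p_n^{\pm}$ and $q_n^{\pm}$ are written out, the argument is a direct transcription of Lemma~\ref{lem307}. The only technical point deserving care is the index-range verification in the second step, and the subranges of $k$ on which Lemma~\ref{lem306} already supplies the sharper vanishing $p_n^{\pm}(k)\pm\overline{q_n^{\pm}(k)}=0$ are handled automatically, since both sides of the stated identity then collapse to zero.
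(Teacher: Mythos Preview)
Your argument is correct and actually establishes the slightly stronger pair of identities $p_n^{+}(k)=p_n^{-}(k)$ and $q_n^{+}(k)=-q_n^{-}(k)$, from which the lemma is immediate. The route, however, differs from the paper's. The paper does not read $p_n^{\pm}$ and $q_n^{\pm}$ off the expansion preceding \eqref{eq306}; instead it substitutes the defining relation \eqref{eq302} to rewrite $\mathsf{E}\phi_n^{\pm}$ in terms of $\mathsf{E}X(0)$, $\mathsf{E}^{\sharp}Y(0)$ and $\mathsf{E}^{\sharp}\mathsf{J}\mathsf{P}_n\phi_n^{\pm}$, applies $\mathsf{J}$, and combines the two to obtain the explicit display \eqref{eq310} for $(\mathsf{I}\pm\mathsf{J})\mathsf{E}(\phi_n^{\pm}+X(0))$, which is then matched against \eqref{eq307}. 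Your approach is shorter and more transparent, since it isolates the structural fact that $p_n^{\pm}$ depends only on the $u$-coefficients and $q_n^{\pm}$ only on the $v$-coefficients, so Lemma~\ref{lem307} applies termwise. The paper's detour, on the other hand, produces formula \eqref{eq310} as a byproduct, and that formula is reused in the proof of Proposition~\ref{prop310}; your method would require deriving \eqref{eq310} separately if it is needed later.
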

\begin{proof} 
According to Lemma \ref{lem307}, 
we write $u_n(k)=u_n^\pm(k)$ and $v_n(k)= \pm v_n^\pm(k)$. 
By \eqref{eq302}, 
\[
\aligned
{\mathsf E} \, \phi_{n}^{\pm} 
&= {\mathsf E}X(0) \mp {\mathsf E}^{\sharp} Y(0) 
\mp {\mathsf E}^{\sharp} {\mathsf J}{\mathsf P}_n  \phi_{n}^{\pm} \\
&= \sum_{j=0}^{d} C_{L-rj} X((d-r+1)/2-j) \mp 
\sum_{j=0}^{d} \overline{C_{L-rj}} \, Y((d-r+1)/2-j) \\
& \quad -
\sum_{j=0}^{d}\sum_{l=0}^{n-1} \overline{C_{L-rj}} \overline{v_n(l)} X(l-(d+r-1)/2+j) \\
& \quad \mp 
\sum_{j=0}^{d}\sum_{k=0}^{n-1} \overline{C_{L-rj}} \overline{u_n(k)} Y(k+(d-r+1)/2-j), 
\endaligned
\]
where we understand that the double sums on the right-hand side are zero when $n$ is zero.
Therefore, 
\[
\aligned
\mathsf{J}\mathsf{E} \, \phi_{n}^{\pm} 
&=  \mp 
\sum_{j=0}^{d} C_{L-rj} X((d-r+1)/2-j) 
+
\sum_{j=0}^{d} \overline{C_{L-rj}} Y((d-r+1)/2-j)
\\
& \quad \mp 
\sum_{j=0}^{d}\sum_{k=0}^{n-1} C_{L-rj} u_n(k) X(k+(d-r+1)/2-j) \\
& \quad -
\sum_{j=0}^{d}\sum_{l=0}^{n-1} C_{L-rj} v_n(l) Y(l-(d+r-1)/2+j) .
\endaligned
\]
Combining the above, 
\begin{equation} \label{eq310}
\aligned
({\mathsf I} \pm {\mathsf J}){\mathsf E} \, \phi_{n}^{\pm} 
& + ({\mathsf E}X(0) \pm {\mathsf E}^{\sharp} Y(0)) \\
&  = \sum_{j=0}^{d}C_{L-rj} X((d-r+1)/2-j) \pm \sum_{j=0}^{d}\overline{C_{L-rj}} \, Y((d-r+1)/2-j)    \\
& \quad - 
\sum_{j=0}^{d}\sum_{k=0}^{n-1} 
(C_{L-rj} u_n(k) + \overline{C_{-(L-rj)}} \overline{v_n(k)} )
X(k+(d-r+1)/2-j) \\
& \quad \mp 
\sum_{j=0}^{d}\sum_{k=0}^{n-1} 
(\overline{C_{L-rj}} \overline{u_n(k)} + C_{-(L-rj)} v_n(k)) Y(k+(d-r+1)/2-j), 
\endaligned
\end{equation}
where we understand that the double sums on the right-hand side are zero when $n$ is zero.
Comparing the right-hand sides of the above formulas of 
$({\mathsf I} \pm {\mathsf J}){\mathsf E}(\phi_{n}^{\pm}+X(0))$ 
with \eqref{eq307}, 
we obtain Lemma \ref{lem308}. 
\end{proof}

We define the column vectors $A_n^\ast$ and $B_n^\ast$ of length $8d$ by 
\begin{equation} \label{eq311}
\aligned
A_n^\ast=A_n^\ast(\mathcal{C}) &:
= (I+\mathfrak{J}) \Psi_n^+ 
= (I+\mathfrak{J})\mathfrak{E}_0(\Phi_{n}^{+}+\chi), \\
B_n^\ast=B_n^\ast(\mathcal{C}) &:
= (I-\mathfrak{J}) \Psi_n^- 
= (I-\mathfrak{J}) \mathfrak{E}_0(\Phi_{n}^{-}+\chi),
\endaligned
\end{equation}
where $I=I_{8d}$ is the identify matrix of size $8d$. 
We define the row vectors $F^\pm(t,z)$ of length $2d$ by
\[
\aligned 
F^+(t,z)
:= & \begin{bmatrix} 
X(-\tfrac{d+r-1}{2})~X(-\tfrac{d+r-1}{2}+1)~\cdots~X(\tfrac{d-r+1}{2}) 
\quad 0~\cdots~0
\end{bmatrix}, \\
F^-(t,z)
:= & \begin{bmatrix} 
0~\cdots~0 \quad 
Y(\tfrac{d-r+1}{2})~Y(\tfrac{d-r+1}{2}-1)~\cdots~Y(-\tfrac{d+r-1}{2}) 
\end{bmatrix}, 
\endaligned 
\]
and the row vector $F(t,z)$ of length $4d$ by 
\[
F(t,z) := \begin{bmatrix} F^+(t,z) & F^-(t,z) \end{bmatrix}.
\]
Then,  we obtain
\begin{equation} \label{eq312}
\aligned
A_n^\ast(t,z) = \frac{1}{4} \begin{bmatrix} F(t,z) & F(t,z)\end{bmatrix} \cdot A_n^\ast, \quad 
B_n^\ast(t,z) = \frac{i}{4} \begin{bmatrix} F(t,z) & -F(t,z)\end{bmatrix} \cdot B_n^\ast
\endaligned
\end{equation}
by \eqref{eq207}, \eqref {eq306}, and \eqref{eq307}.

\begin{proposition} \label{prop309} 
We have 
\[
- \frac{d}{dt} A_n^\ast(t,z) = z B_n^\ast(t,z), \quad -\frac{d}{dt} B_n^\ast(t,z) = -z A_n^\ast(t,z)
\]
for every $0 \leq n \leq d$. 
\end{proposition}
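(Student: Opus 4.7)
The plan is to reduce the identity to a computation of $\partial_t X(k)$ and $\partial_t Y(k)$ combined with the relation provided by Lemma \ref{lem308}. First I would use formula \eqref{eq307} of Lemma \ref{lem306} together with definition \eqref{eq207} to write the two functions in the compact common form
\[
A_n^\ast(t,z)=\frac{1}{2}\sum_{k}\bigl(a_n(k)X(k)+\overline{a_n(k)}Y(k)\bigr),\qquad
B_n^\ast(t,z)=\frac{1}{2}\sum_{k}\bigl(b_n(k)X(k)+\overline{b_n(k)}Y(k)\bigr),
\]
where $a_n(k):=p_n^{+}(k)+\overline{q_n^{+}(k)}$, $b_n(k):=i\bigl(p_n^{-}(k)-\overline{q_n^{-}(k)}\bigr)$, and the summation over $k$ ranges over $-(d+r-1)/2+n\leq k\leq (d-r+1)/2$ (the remaining $k$ contribute zero by Lemma \ref{lem306}). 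The only place complex conjugation symmetry is needed here is the elementary identity $\overline{p\pm\overline{q}}=\overline{p}\pm q$, which I would record once and then reuse.

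Next I would compute the $t$-derivatives of the two building blocks. By definition \eqref{eq201}, $X(k)=e^{i(r(k+1)-1-t)z}$ and $Y(l)=e^{-i(r(l+1)-1-t)z}$, so
\[
\frac{d}{dt}X(k)=-iz\,X(k),\qquad \frac{d}{dt}Y(l)=iz\,Y(l).
\]
Applying this term by term to the expressions from the previous step gives
\[
-\frac{d}{dt}A_n^\ast(t,z)=\frac{iz}{2}\sum_{k}\bigl(a_n(k)X(k)-\overline{a_n(k)}Y(k)\bigr),
\]
\[
-\frac{d}{dt}B_n^\ast(t,z)=\frac{iz}{2}\sum_{k}\bigl(b_n(k)X(k)-\overline{b_n(k)}Y(k)\bigr).
\]

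The identities to be proved then amount, after matching coefficients of $X(k)$ and $Y(k)$, to the single relation $b_n(k)=i\,a_n(k)$ for every admissible $k$ (its complex conjugate handles the $Y(k)$ coefficients automatically). Unwinding the definitions of $a_n(k)$ and $b_n(k)$, this is precisely the assertion
\[
p_n^{+}(k)+\overline{q_n^{+}(k)}=p_n^{-}(k)-\overline{q_n^{-}(k)},
\]
which is exactly Lemma \ref{lem308}. Both differential relations follow at once.

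I do not expect any serious obstacle: the substance of the proposition has already been isolated in Lemma \ref{lem306} (which gives the finite expansion of $A_n^\ast,B_n^\ast$) and Lemma \ref{lem308} (which provides the coefficient identification), and the remaining work is a straightforward termwise differentiation. The only point requiring mild care is to verify that the summation ranges for $A_n^\ast$ and $B_n^\ast$ coincide so that the coefficient comparison is legitimate; this is ensured by the vanishing statements in Lemma \ref{lem306}, which apply uniformly to both the $+$ and $-$ systems.
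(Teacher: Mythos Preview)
Your proposal is correct and follows essentially the same approach as the paper's own proof: both use the finite expansion \eqref{eq307} from Lemma~\ref{lem306}, differentiate $X(k)$ and $Y(k)$ termwise via \eqref{eq201}, and reduce the identities to the coefficient relation supplied by Lemma~\ref{lem308}. The only cosmetic difference is that the paper introduces the common symbol $r_n(k)=p_n^{+}(k)+\overline{q_n^{+}(k)}=p_n^{-}(k)-\overline{q_n^{-}(k)}$ up front and works with it directly, whereas you define separate $a_n(k),b_n(k)$ and then isolate the relation $b_n(k)=i\,a_n(k)$; the substance is identical.
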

\begin{proof} 
According to Lemma \ref{lem308}, 
we write 
\[
r_n(k)=p_n^{+}(k) + \overline{q_n^{+}(k)} = p_n^{-}(k) - \overline{q_n^{-}(k)}.
\]
Then, by \eqref{eq307} and definition of $X(k)$ and $Y(l)$, 
\begin{equation*} 
\aligned 
((\mathsf{I} + \mathsf{J})& {\mathsf E} (\phi_{n}^{+}+X(0)))(t,z) \\
& = \sum_{k=-(d+r-1)/2+n}^{(d-r+1)/2}\Bigl( 
r_n(k) e^{i(r(k+1)-1-t)z} + \overline{r_n(k)} e^{-i(r(k+1)-1-t)z} 
\Bigr), 
\endaligned 
\end{equation*}
\begin{equation*} 
\aligned 
((\mathsf{I} - \mathsf{J})& {\mathsf E} (\phi_{n}^{-}+X(0)))(t,z) \\
& = \sum_{k=-(d+r-1)/2+n}^{(d-r+1)/2}\Bigl( 
r_n(k)e^{i(r(k+1)-1-t)z}
-
\overline{r_n(k)} e^{-i(r(k+1)-1-t)z}\Bigr). 
\endaligned 
\end{equation*}
Therefore, the differentiability of $A_n^\ast(t,z)$ and $B_n^\ast(t,z)$ with respect to $t$ is trivial, 
and
\begin{equation*} 
\aligned
-\frac{d}{dt} & 
(({\mathsf I} + {\mathsf J}){\mathsf E}(\phi_{n}^{+}+X(0)))(t,z) \\
&= iz \sum_{k=-(d+r-1)/2+n}^{(d-r+1)/2}\Bigl( 
r_n(k) e^{i(r(k+1)-1-t)z} - \overline{r_n(k)} e^{-i(r(k+1)-1-t)z} 
\Bigr), \\
-\frac{d}{dt} & i 
(({\mathsf I} - {\mathsf J}){\mathsf E}(\phi_{n}^{-}+X(0)))(t,z) \\
&= -z \sum_{k=-(d+r-1)/2+n}^{(d-r+1)/2}\Bigl( 
r_n(k)e^{i(r(k+1)-1-t)z}
+
\overline{r_n(k)} e^{-i(r(k+1)-1-t)z}\Bigr). 
\endaligned
\end{equation*}
Hence we obtain Proposition \ref{prop309} by definition \eqref{eq207}. 
\end{proof}

As mentioned in Section \ref{section_2}, 
the next task is to show the connection formula \eqref{eq208} 
for $A_{n-1}^\ast(t,z)$ and $A_{n}^\ast(t,z)$. 

\begin{proposition} \label{prop310} 
The connection formula \eqref{eq208} holds 
for some real matrix $P_n^\ast$ depending only on $\mathcal{C}$ for all $1 \leq n \leq d$.  
In addition $\det P_n^\ast \not=0$ for all $1 \leq n \leq d$, 
which implies that $H_n$ of \eqref{eq211} is well-defined and $\det H_n=1$. 
\end{proposition}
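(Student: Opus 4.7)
The plan is to reduce the connection formula at $t = rn/2$ to a Schur-type functional identity for the scalar quantities $E_n^\ast := A_n^\ast - iB_n^\ast$ and $(E_n^\ast)^\sharp := A_n^\ast + iB_n^\ast$. By Lemmas 3.6 and 3.8, $E_n^\ast(t,z) = \sum_{k=K_n}^{K_1} r_n(k)\,e^{i(r(k+1)-1-t)z}$ with $K_n := -(d+r-1)/2 + n$, $K_1 := (d-r+1)/2$, $r_n(k) := p_n^+(k) + \overline{q_n^+(k)}$, and $r_n(K_1) = 2C_L$. At $t = rn/2$ the exponents $\omega_j := r(K_n+j+1) - 1 - rn/2$ satisfy $\omega_j = -\omega_{d-n-j}$, so $E_n^\ast(rn/2,\cdot)$, $(E_n^\ast)^\sharp(rn/2,\cdot)$, and $E_{n+1}^\ast(rn/2,\cdot)$ all live in the same $(d-n+1)$-dimensional span of $\{e^{i\omega_j z}\}_{j=0}^{d-n}$.

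The central step is to prove the functional identity
\[
E_{n+1}^\ast(rn/2, z) = \lambda_n\,E_n^\ast(rn/2, z) + \mu_n\,(E_n^\ast)^\sharp(rn/2, z)
\]
for complex constants $\lambda_n, \mu_n$ depending only on $\mathcal{C}$. The ratio is forced by requiring the coefficient of the lowest exponential $e^{i\omega_0 z}$ to cancel (since $r_{n+1}(K_n) = 0$ by Lemma 3.6 applied with $n \mapsto n+1$), giving $\lambda_n/\mu_n = -\overline{r_n(K_1)}/r_n(K_n)$; the overall scale is then fixed by $r_{n+1}(K_1) = 2C_L$. To verify that the $d-n-1$ remaining coefficients match, I would compare the linear systems (3.2) for $\phi_{n+1}^\pm$ and $\phi_n^\pm$: subtracting yields
\[
(\mathsf{I} \pm \Theta \mathsf{J}\mathsf{P}_n)(\phi_{n+1}^\pm - \phi_n^\pm) = \mp \Theta \mathsf{J}(\mathsf{P}_{n+1} - \mathsf{P}_n)\phi_{n+1}^\pm,
\]
where $\mathsf{P}_{n+1} - \mathsf{P}_n$ is the rank-two projection onto $\{X(n), Y(n-r+1)\}$; the block/Schur-complement inversion used in the proof of Lemma 3.7 applied to the matrix form (3.4) then produces the required recursion on the $r_n$'s.

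Given the Schur identity, decomposing into real and imaginary parts exhibits the real $2 \times 2$ matrix
\[
P_{n+1}^\ast = \begin{pmatrix}\Re(\lambda_n+\mu_n) & \Im(\lambda_n-\mu_n) \\ -\Im(\lambda_n+\mu_n) & \Re(\lambda_n-\mu_n)\end{pmatrix},\qquad \det P_{n+1}^\ast = |\lambda_n|^2 - |\mu_n|^2,
\]
which is manifestly independent of $z$. The identity $\det H_n = 1$ then follows at once from (2.11) because the anti-diagonal matrix $J := \left(\begin{smallmatrix}0 & 1 \\ -1 & 0\end{smallmatrix}\right)$ has $\det J = 1$.

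The main obstacle I anticipate is proving $\det P_{n+1}^\ast \neq 0$. After the scale is pinned down, $|\lambda_n|^2 - |\mu_n|^2$ rewrites as an explicit expression in $|C_L|^2$ and $|r_n(K_n)|^2$; the task is to extract $r_n(K_n)$ as a specific coordinate of the solution $\Phi_n^\pm$ of (3.4) via Cramer's rule combined with the Schur-complement factorization of $\mathfrak{E}_0 \pm \mathfrak{E}_n^\sharp$ already built in Lemma 3.5, and to identify $\det P_{n+1}^\ast$ with a ratio of the Schur--Cohn determinants $D_n(\mathcal{C})$ and $D_{n+1}(\mathcal{C})$, nonzero under the standing hypothesis via Lemma 3.2. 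This same determinantal computation is what ultimately yields formula (2.14) of Theorem 1.1(4).
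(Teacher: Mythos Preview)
Your reformulation via the scalar ``Schur identity'' $E_{n+1}^\ast = \lambda_n E_n^\ast + \mu_n (E_n^\ast)^\sharp$ is equivalent to the connection formula, and the extraction of the real matrix with $\det P_{n+1}^\ast = |\lambda_n|^2 - |\mu_n|^2$ is correct. But the central step---showing that \emph{all} $d-n-1$ intermediate coefficients match once the two extreme ones are forced---is only a plan. The subtraction identity $(\mathsf{I}\pm\Theta\mathsf{J}\mathsf{P}_n)(\phi_{n+1}^\pm-\phi_n^\pm)=\mp\Theta\mathsf{J}(\mathsf{P}_{n+1}-\mathsf{P}_n)\phi_{n+1}^\pm$ expresses the difference in terms of a rank-two projection of $\phi_{n+1}^\pm$, but passing from this to a recursion on the $r_n(k)$'s requires applying $\mathsf{E}$ and $(\mathsf{I}\pm\mathsf{J})$ and tracking what happens at $t=rn/2$; you have not shown how the Schur-complement machinery of Lemma~3.5 closes this.

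The paper avoids this difficulty altogether by a structural observation you are missing. At $t=rn/2$ one has $X(k)=Y(n-k-r+1)$, and under this collapse the equations $((\mathsf{E}\pm\mathsf{E}^\sharp\mathsf{J}\mathsf{P}_m)(\phi_m^\pm-X(0)))(rn/2,z)=\mp 2\mathsf{E}^\sharp Y(0)$ for $m=n$ and $m=n+1$, after reading off coefficients of $X(k-(d+r-1)/2)$ for $0\le k\le n$, \emph{both} become linear systems with the \emph{same} coefficient matrix $L_{n+1}^\pm(\mathcal{C})$; only the $(n{+}1)$th and $(n{+}2)$th entries of the right-hand sides differ. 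Thus the full vector identity (your Schur relation, all coefficients at once) follows from a two-entry match, which the paper writes as the $2\times 2$ system~(3.19) for $(\alpha_{n+1}^\ast,\beta_{n+1}^\ast,\gamma_{n+1}^\ast,\delta_{n+1}^\ast)$.

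For $\det P_{n+1}^\ast\neq 0$ the paper's argument is also much lighter than your proposal. Rather than computing $|\lambda_n|^2-|\mu_n|^2$ explicitly via Cramer's rule and Schur--Cohn determinants, the paper argues by contradiction: if the rows of $P_{n+1}^\ast$ were proportional over $\R$, then (3.19) would force the proportionality constant to be purely imaginary, which is impossible; and solvability of (3.19) itself is guaranteed because the obstruction (equation~(3.18)) would, via (3.13) and (3.16), make $A_n^\ast$ a real multiple of $B_n^\ast$, contradicting the differential relations of Proposition~3.9. The explicit determinantal identification you outline is indeed carried out in the paper, but separately in Lemma~3.11 and Proposition~3.12, where it is actually needed for formula~(1.14); for Proposition~3.10 alone it is unnecessary overhead.
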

\begin{proof} 
As in the proof of Lemma \ref{lem308} we write $u_n(k)=u_n^\pm(k)$ and $v_n(k)= \pm v_n^\pm(k)$. 
Taking the limit $t \to rn/2$ in \eqref{eq201}, 
we have 
$X(k):=e^{i(r(k-n/2)+r-1)z}$ and $Y(l)=e^{-i(r(l-n/2)+r-1)z}$. 
Therefore, 
$X(k)=Y(l)$ as a function of $z$ if and only if $n = k+l+r-1$. 

First, we prove \eqref{eq208} for $n \geq 1$. 
Evaluating \eqref{eq310} at $t=rn/2$, 
we get 
\begin{equation} \label{eq313}
\aligned
({\mathsf I} \pm {\mathsf J}){\mathsf E} & 
(\phi_{n}^{\pm} + X(0))(rn/2,z) \\
&  = -(u_n(0)-1)\sum_{j=0}^{d}C_{L-rj} X(0+(d-r+1)/2-j) \\
& \quad - 
\sum_{k=1}^{n-1} (u_n(k) \pm v_n(n-k)) 
\sum_{j=0}^{d} C_{L-rj}  X(k+(d-r+1)/2-j)  \\
& \quad - \overline{v_n(0)} \sum_{j=0}^{d}\overline{C_{L-rj}} \, X(0-(d+r-1)/2+j)  
\pm \left[ \cdots \right] \\
\endaligned 
\end{equation}
and 
\begin{equation*} 
\aligned 
({\mathsf I} \pm {\mathsf J}){\mathsf E} & 
(\phi_{n+1}^{\pm} + X(0))(rn/2,z) \\
&  = - (u_{n+1}(0) \pm v_{n+1}(n)-1 )\sum_{j=0}^{d} C_{L-rj} X(0+(d-r+1)/2-j) \\
& \quad - 
\sum_{k=1}^{n-1} (u_{n+1}^+(k) \pm v_{n+1}^+(n-k)) 
 \sum_{j=0}^{d} C_{L-rj}  X(k+(d-r+1)/2-j)  \\
& \quad \mp
 (\overline{u_{n+1}(n) \pm v_{n+1}(0)}) 
 \sum_{j=0}^{d} \overline{C_{L-rj}} X(0-(d+r-1)/2+j) 
\pm \left[ \cdots \right],
\endaligned
\end{equation*}
where the bracket parts on the right-hand sides 
are the conjugates of the first half of the right-hand sides. 
Therefore, if we prove that the linear relations  
\begin{equation} \label{eq314}
\scalebox{0.9}{$
\aligned 
\alpha_{n+1}^{\ast} &
\begin{bmatrix}
u_n(0)-1 \\
u_n(1) + v_n(n-1) \\
u_n(2) + v_n(n-2) \\
\vdots \\
u_n(n-1) + v_n(1) \\
v_n(0)  
\end{bmatrix}
+ i \beta_{n+1}^{\ast}
\begin{bmatrix}
u_n(0)-1 \\
u_n(1) - v_n(n-1) \\
u_n(2) - v_n(n-2) \\
\vdots \\
u_n(n-1) - v_n(1) \\
-v_n(0)  
\end{bmatrix} 
=
\begin{bmatrix}
u_{n+1}(0) + v_{n+1}(n) -1 \\
u_{n+1}(1) + v_{n+1}(n-1)  \\
u_{n+1}(2) + v_{n+1}(n-2)  \\
\vdots \\
u_{n+1}(n) + v_{n+1}(0) 
\end{bmatrix}, \\
\gamma_{n+1}^{\ast} &
\begin{bmatrix}
u_n(0)-1 \\
u_n(1) + v_n(n-1) \\
u_n(2) + v_n(n-2) \\
\vdots \\
u_n(n-1) + v_n(1) \\
v_n(0)  
\end{bmatrix}
+ i \delta_{n+1}^{\ast}
\begin{bmatrix}
u_n(0)-1 \\
u_n(1) - v_n(n-1) \\
u_n(2) - v_n(n-2) \\
\vdots \\
u_n(n-1) - v_n(1) \\
-v_n(0)  \\
\end{bmatrix} 
= i
\begin{bmatrix}
u_{n+1}(0) - v_{n+1}(n) -1 \\
u_{n+1}(1) - v_{n+1}(n-1)  \\
u_{n+1}(2) - v_{n+1}(n-2)  \\
\vdots \\
u_{n+1}(n) - v_{n+1}(0) 
\end{bmatrix}
\endaligned 
$}
\end{equation}
hold, then they imply \eqref{eq208} for 
$\displaystyle{
P_{n+1}^\ast = 
\begin{bmatrix}
\alpha_{n+1}^\ast & \beta_{n+1}^\ast \\ \gamma_{n+1}^\ast & \delta_{n+1}^\ast 
\end{bmatrix}
}$ and 
\begin{equation} \label{eq315}
\aligned 
\alpha_{n+1}^\ast + i\beta_{n+1}^\ast 
&=\frac{u_{n+1}(0)+v_{n+1}(n)-1}{u_{n}(0)-1} 
= \frac{\overline{u_{n+1}(n)+v_{n+1}(0)}}{\overline{v_n(0)}}, \\
\delta_{n+1}^\ast - i\gamma_{n+1}^\ast 
&= 
\frac{u_{n+1}(0)-v_{n+1}(n)-1}{u_n(0)-1} 
= -\frac{\overline{u_{n+1}(n)-v_{n+1}(0)}}{\overline{v_n(0)}}. 
\endaligned 
\end{equation}
Hence the proof is completed if \eqref{eq314} is shown. 

Subtracting $\mathsf{E}X(0) \pm \mathsf{E}^\sharp Y(0)$ 
from both sides of \eqref{eq302} for $n$ and $n+1$,  
and then taking the limit $t \to rn/2$ on the left-hand sides,  we obtain 
\begin{equation*} 
\aligned 
(({\mathsf E} & \pm {\mathsf E}^{\sharp}{\mathsf J}{\mathsf P_n})(\phi_n-X(0)))(rn/2,z) \\
& = 
(u_{n}(0)-1) \sum_{j=0}^{d} C_{L-rj} \cdot X(0+(d-r+1)/2-j) \\ 
& \quad + 
 \sum_{k=1}^{n-1}  (u_{n}(k) \pm v_{n}(n-k)) 
\sum_{j=0}^{d} C_{L-rj} \cdot X(k+(d-r+1)/r-j) \\
& \quad + \overline{v_n(0)} \sum_{j=0}^{d} \overline{C_{L-rj}} X(0-(d+r-1)/2+j) \\ 
& \quad \pm \overline{(u_{n}(0)-1)} \sum_{j=0}^{d} \overline{C_{L-rj}}\cdot X(n-(d+r-1)/2+j) \\
& \quad \pm 
\sum_{k=1}^{n-1} 
 (\overline{u_{n}(k)} \pm \overline{v_{n}(n-k)}) 
\sum_{j=0}^{d} \overline{C_{L-rj}} \cdot X(n-k-(d+r-1)/2+j) \\
& \quad \pm v_{n}(0) \sum_{j=0}^{d} C_{L-rj} \cdot X(n+(d-r+1)/2-j) \\
& \quad + u_{n}(n) 
\sum_{j=0}^{d} C_{L-rj} \cdot X(n+(d-r+1)/2-j) \\
& \quad + 
 \sum_{k=n+1}^{\infty}  (u_{n}(k) \pm v_{n}(n-k)) 
\sum_{j=0}^{d} C_{L-rj} \cdot X(k+(d-r+1)/2-j) \\
\endaligned 
\end{equation*}
and 
\begin{equation*} 
\aligned 
(({\mathsf E} & \pm {\mathsf E}^{\sharp}{\mathsf J}{\mathsf P_{n+1}}) (\phi_{n+1}-X(0)))(rn/2,z) \\
& = (u_{n+1}(0) \pm v_{n+1}(n)-1) 
\sum_{j=0}^{d} C_{L-rj} \cdot  X(0+(d-r+1)/2-j) \\
& \quad \pm  (\overline{u_{n+1}(0) \pm v_{n+1}(n)-1}) 
\sum_{j=0}^{d} \overline{C_{L-rj}} \cdot X(n-(d+r-1)/2+j) \\
& \quad +
\sum_{k=1}^{\infty} (u_{n+1}(k) \pm v_{n+1}(n-k)) 
\sum_{j=0}^{d} C_{L-rj} \cdot  X(k+(d-r+1)/2-j) \\
& \quad \pm 
\sum_{k=1}^{n} (\overline{u_{n+1}(k) \pm v_{n+1}(n-k)}) 
\sum_{j=0}^{d} \overline{C_{L-rj}} \cdot X(n-k-(d+r-1)/2+j).
\endaligned 
\end{equation*}
In both cases of $n$ and $n+1$, 
the right-hand  sides are 
\[
\aligned 
\mp 2 {\mathsf E}^{\sharp}Y(0)
&= \mp 2\, \sum_{j=0}^{d} \overline{C_{L-rj}} X(n-(d+r-1)/2+j) \\
&= 
\mp 2\, \overline{C_{L}} X(n-(d+r-1)/2) 
\mp 2\, \sum_{j=1}^{d} \overline{C_{L-rj}} X(n-(d+r-1)/2+j). 
\endaligned 
\]
Therefore, by comparing $(n+1)$ coefficient of 
$X(k-(d+r-1)/2)$ for $0 \leq k \leq n$ in equations  
$(({\mathsf E}  \pm {\mathsf E}^{\sharp}{\mathsf J}{\mathsf P_n})(\phi_n-X(0)))(rn/2,z)
= \mp 2 {\mathsf E}^{\sharp}Y(0)$ and 
$(({\mathsf E}  \pm {\mathsf E}^{\sharp}{\mathsf J}{\mathsf P_{n+1}})(\phi_{n+1}-X(0)))(rn/2,z)
= \mp 2 {\mathsf E}^{\sharp}Y(0)$, 
we obtain linear equations 
\begin{equation} \label{eq316}
L_{n+1}^\pm({\mathcal{C}})
\begin{bmatrix}
u_n(0)-1 \\
u_n(1) \pm v_n(n-1) \\
u_n(2) \pm v_n(n-2) \\
\vdots \\
u_n(n-1) \pm v_n(1) \\
\pm v_n(0)  \\
\pm \overline{v_n(0)} \\
\overline{u_n(n-1) \pm v_n(1)} \\
\vdots \\
\overline{u_n(1) \pm v_n(n-1)} \\
\overline{u_n(0)-1} \\
\end{bmatrix}
= \mp 
\begin{bmatrix}
0 \\
\vdots \\
0 \\
2\overline{C_{L}} \\
2C_{L} \\
0 \\
\vdots \\
0 
\end{bmatrix}
-
\begin{bmatrix}
0 \\
\vdots \\
0 \\
C_{-L}u_n(n)\\
\overline{C_{-L}}\,\overline{u_n(n)} \\
0 \\
\vdots \\
0 
\end{bmatrix}
\end{equation}
and 
\begin{equation} \label{eq317}
L_{n+1}^\pm({\mathcal{C}})
\begin{bmatrix}
u_{n+1}(0) \pm v_{n+1}(n) -1 \\
u_{n+1}(1) \pm v_{n+1}(n-1)  \\
u_{n+1}(2) \pm v_{n+1}(n-2)  \\
\vdots \\
u_{n+1}(n) \pm v_{n+1}(0) \\
\overline{u_{n+1}(n) \pm v_{n+1}(0)} \\
\overline{u_{n+1}(n-1) \pm v_{n+1}(1)} \\
\vdots \\
\overline{u_{n+1}(0) \pm v_{n+1}(n) -1}
\end{bmatrix}
= \mp 
\begin{bmatrix}
0 \\
\vdots \\
0 \\
2\overline{C_{L}} \\
2C_{L} \\
0 \\
\vdots \\
0 
\end{bmatrix},
\end{equation}
where $L_n^\pm(\mathcal{C})$ are defined in \eqref{eq101} and \eqref{eq108}, 
and non-zero components of the column vectors 
on the right-hand side are the $(n+1)$th and $(n+2)$th entries. 
Suppose that 
\begin{equation} \label{eq318}
-2\overline{C_{L}}-C_L u_n(n) = K_n \cdot i (2\overline{C_{L}}-C_L u_n(n))
\end{equation}
holds for some $1 \leq n \leq d$ and $K_n \in \R\setminus\{0\}$. 
Then $A_{n}^{\ast}(t,z)=K_{n} B_{n}^{\ast}(t,z)$ by \eqref{eq313} and \eqref{eq316}. 
But, in this case, it must be $K_n= \pm i$ by Proposition \ref{prop309}. 
This is a contradiction. 
Therefore, \eqref{eq318} does not hold for any $K_n \in \R\setminus\{0\}$. 
Hence, there exist real numbers 
$\alpha_{n+1}^\ast$, $\beta_{n+1}^\ast$, $\gamma_{n+1}^\ast$, $\delta_{n+1}^\ast$ 
such that 
\begin{equation} \label{eq319}
\aligned 
\alpha_{n+1}^\ast (-2\overline{C_{L}}-C_L u_n(n))
+ i \beta_{n+1}^\ast (2\overline{C_{L}}-C_L u_n(n)) 
& = -2 \overline{C_{L}}, \\
\gamma_{n+1}^\ast (-2\overline{C_{L}}-C_L u_n(n))
+ i \delta_{n+1}^\ast (2\overline{C_{L}}-C_L u_n(n)) 
& = 2 i \overline{C_{L}} 
\endaligned 
\end{equation}
holds. This implies relation \eqref{eq314}. 

We show that $\displaystyle{
\det P_{n+1}^\ast = 
\begin{bmatrix}
\alpha_{n+1}^\ast & \beta_{n+1}^\ast \\ \gamma_{n+1}^\ast & \delta_{n+1}^\ast 
\end{bmatrix} 
\not=0.
}$ 
If $\det P_{n+1}^\ast=0$, its row vectors are proportional:  
$[\,\alpha_{n+1}^\ast\,\,\,\,\beta_{n+1}^\ast\,]=K_n^\prime [\,\gamma_{n+1}^\ast\,\,\,\,\delta_{n+1}^\ast\,]$, 
say. Then \eqref{eq319} implies $K_n^\prime =-i$, 
but it is impossible for real vectors 
$[\,\alpha_{n+1}^\ast\,\,\,\,\beta_{n+1}^\ast\,]$ and $[\,\gamma_{n+1}^\ast\,\,\,\,\delta_{n+1}^\ast\,]$.  

Finally we prove \eqref{eq208} for $n=0$. 
We have 
$ ({\mathsf I} \pm {\mathsf J}) {\mathsf E}(\phi_{0}^{\pm}+X(0))
= {\mathsf E} X(0) \pm {\mathsf E^\sharp} Y(0)$ 
by $\mathsf{E}\phi_{0}^{\pm}= {\mathsf E} X(0) \mp {\mathsf E^\sharp} Y(0)$, 
since $\mathsf{P}_0=0$.   
Evaluating $({\mathsf I} \pm {\mathsf J}) {\mathsf E}(\phi_{0}^{\pm}+X(0))$ 
and $({\mathsf I} \pm {\mathsf J}) {\mathsf E}(\phi_{1}^{\pm}+X(0))$ 
at $t=0$ by using \eqref{eq310} for $n=0$ and $n=1$, we get
\[
\aligned
({\mathsf I} \pm {\mathsf J}) & {\mathsf E}(\phi_{0}^{\pm}+X(0))(0,z) \\
&  = 
\sum_{j=0}^{d}C_{L-rj} X(0+(d-r+1)/2-j)  \pm 
\sum_{j=0}^{d}\overline{C_{L-rj}} \, X(0-(d+r-1)/2+j), \\
({\mathsf I} \pm {\mathsf J}) & {\mathsf E}(\phi_{1}^{\pm}+X(0))(0,z) \\
&  = 
(1-u_1(0) \mp v_1(0))
\sum_{j=0}^{d}C_{L-rj} X(0+(d-r+1)/2-j) \\ 
& \quad \pm 
(\overline{1- u_1(0) \mp v_1(0)})
\sum_{j=0}^{d}\overline{C_{L-rj}} \, X(0-(d+r-1)/2+j). 
\endaligned
\]
Therefore, \eqref{eq208} holds for 
$\displaystyle{
P_1^\ast = 
\begin{bmatrix}
\alpha_1^\ast & \beta_1^\ast \\ \gamma_1^\ast & \delta_1^\ast 
\end{bmatrix}
}$ 
with 
$\alpha_1^\ast+i\beta_1^\ast= 1 - u_1(0) - v_1(0)$ 
and 
$\gamma_1^\ast+i\delta_1^\ast=1-u_1(0) + v_1(0)$. 
\end{proof}

\begin{lemma} \label{lem311}
Let $\mathfrak{c}$ be the column vector of length $2n$ defined by 
\[
\mathfrak{c}= {}^{\rm t} 
\begin{bmatrix}
0 & \cdots & 0 & \overline{C_{L}} & C_{L} & 0 & \cdots & 0 
\end{bmatrix},
\]
where $\overline{C_L}$ and $C_L$ are $n$th and $(n+1)$th entries, respectively. 
Then, 
\begin{equation} \label{eq320}
\aligned 
\,&
\frac{1}{2}
\Bigl( 
\det L_{n}^+({\mathcal{C}};- 2\mathfrak{c};1)
\det L_{n}^-({\mathcal{C}}; 2\mathfrak{c};2n)  
  +
\det L_{n}^+({\mathcal{C}}; - 2\mathfrak{c}; 2n)
\det L_{n}^-({\mathcal{C}}; 2\mathfrak{c}; 1) \Bigr) \\
& \quad =
\begin{cases}
4|C_{L}|^4 \cdot D_{n-2}({\mathcal{C}})D_n({\mathcal{C}}) & n \geq 2, \\
-4|C_{L}|^2 \cdot D_1(\mathcal{C}) & n=1, 
\end{cases}
\endaligned 
\end{equation}
where 
$L_{n}^\pm({\mathcal{C}}; \mp \mathfrak{c}; k)$ 
is a matrix obtained by replacing 
the $k$th column of $L_{n}^\pm({\mathcal{C}})$ with $\mp \mathfrak{c}$.
Recall that $D_0(\mathcal{C})=1$ by convention. 
\end{lemma}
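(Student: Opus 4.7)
The plan is to expand each replaced-column determinant via a Cramer-type cofactor expansion, collect the products, apply the Desnanot--Jacobi (Lewis Carroll) identity, and identify a principal submatrix as $L_{n-2}^+(\mathcal{C})$.

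First I will expand each of the four determinants $\det L_n^{\pm}(\mathcal{C};\mp 2\mathfrak{c};k)$ for $k\in\{1,2n\}$ along the replaced column. Because $\mathfrak{c}$ has only two nonzero entries, namely $\overline{C_L}$ at position $n$ and $C_L$ at position $n+1$, each such determinant is a linear combination of exactly two $(2n-1)\times(2n-1)$ cofactor minors $M^{\pm}_{n,k}$ and $M^{\pm}_{n+1,k}$ of $L_n^{\pm}$. Using the relation $L_n^-=D L_n^+ D$ with $D=\mathrm{diag}(I_n,-I_n)$, every minor $M^{-}_{i,j}$ reduces to $\pm M^{+}_{i,j}$ with a sign determined by whether the deleted row $i$ and column $j$ lie in the top or bottom half of the block structure. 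Substituting into the two products of determinants, the terms carrying coefficients $\overline{C_L}^{\,2}$ or $C_L^{2}$ on the symmetric monomials $M^+_{n,1}M^+_{n,2n}$ and $M^+_{n+1,1}M^+_{n+1,2n}$ cancel in pairs, leaving
\[
\mathrm{LHS}=4|C_L|^2\bigl(M^+_{n+1,1}M^+_{n,2n}-M^+_{n,1}M^+_{n+1,2n}\bigr).
\]

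Next I apply the Desnanot--Jacobi identity to $L_n^+(\mathcal{C})$ with rows $\{n,n+1\}$ and columns $\{1,2n\}$ removed:
\[
D_n(\mathcal{C})\cdot\det\bigl((L_n^+)^{n,n+1}_{1,2n}\bigr)=M^+_{n,1}M^+_{n+1,2n}-M^+_{n,2n}M^+_{n+1,1},
\]
which reduces the lemma for $n\ge 2$ to the identity $\det((L_n^+)^{n,n+1}_{1,2n})=-|C_L|^2\,D_{n-2}(\mathcal{C})$. To prove this, observe that in $(L_n^+)^{n,n+1}_{1,2n}$ both the first and last rows are extremely sparse: row $1$ of $L_n^+$ restricted to columns $2,\dots,2n-1$ has the single nonzero entry $\overline{C_L}$ in column $n+1$, while row $2n$ has the single nonzero $C_L$ in column $n$. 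Expanding successively along these two rows produces the overall sign $(-1)^{n+1}\cdot(-1)^n=-1$ and extracts the factor $|C_L|^2$, leaving the principal submatrix of $L_n^+$ on rows and columns $\{2,\dots,n-1,\,n+2,\dots,2n-1\}$. By the Toeplitz structure of the four blocks ${}^tM_n$, ${}^t\overline{N_n}$, $N_n$, $\overline{M_n}$, this $(2n-4)\times(2n-4)$ submatrix coincides block-by-block with $L_{n-2}^+(\mathcal{C})$, whose determinant is $D_{n-2}(\mathcal{C})$, completing the case $n\ge 2$.

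For $n=1$ the four determinants are $2\times 2$ and a direct computation gives $\mathrm{LHS}=-4|C_L|^2(|C_{-L}|^2-|C_L|^2)=-4|C_L|^2 D_1(\mathcal{C})$, matching the stated right-hand side. The main obstacle is the sign bookkeeping in the first step: one must track signs from both the cofactor expansions and the $L_n^-=D L_n^+ D$ conversion carefully enough to see that the unwanted monomials $M^+_{n,1}M^+_{n,2n}$ and $M^+_{n+1,1}M^+_{n+1,2n}$ cancel and only the Desnanot--Jacobi combination survives; the identification of the reduced submatrix as $L_{n-2}^+(\mathcal{C})$ is then conceptually clean but still requires a block-by-block check.
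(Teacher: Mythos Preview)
Your argument is correct and, past the initial cofactor expansion, takes a genuinely different route from the paper's. Both proofs arrive at the same intermediate expression
\[
\mathrm{LHS}=4|C_L|^2\bigl(m_{n+1,1}m_{n,2n}-m_{n,1}m_{n+1,2n}\bigr),
\]
with $m_{i,j}$ the $(i,j)$-minor of $L_n^+(\mathcal{C})$; the paper gets there by first converting the $L_n^-$ determinants into $L_n^+$ determinants via row/column sign changes and then expanding, while you convert at the level of minors using $L_n^-=D L_n^+ D$, which is equivalent. From this point the two proofs diverge. The paper expands each of the four quantities $\det L_n^+(\mathcal{C};e_k;j)$ once more along row $1$ or row $2n$, obtaining
\[
\mathrm{LHS}=4|C_L|^4\bigl(D_{n-1}(\mathcal{C})^2-|\Delta_{n-1}(\mathcal{C})|^2\bigr),
\]
where $\Delta_{n-1}(\mathcal{C})=\det\bigl((L_n^+)^{(1,n;\,1,n+1)}\bigr)$, and then invokes the classical Schur identity $D_{n-1}^2-|\Delta_{n-1}|^2=D_{n-2}D_n$ by citing \cite[p.~41,~(12)]{Go86}. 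You instead apply the Desnanot--Jacobi (Sylvester) identity directly to $L_n^+$ with rows $\{n,n+1\}$ and columns $\{1,2n\}$ deleted, which produces the factor $D_n(\mathcal{C})$ immediately, and then evaluate the complementary $(2n-2)\times(2n-2)$ minor by expanding along its first and last rows to reach $-|C_L|^2 D_{n-2}(\mathcal{C})$. Your approach is more self-contained (no external citation needed) and slightly shorter; the paper's route has the virtue of exhibiting the intermediate quantity $D_{n-1}^2-|\Delta_{n-1}|^2$, which is the form in which the identity appears in Schur's original work. The two are, at bottom, different organizations of the same Jacobi-type determinant identity.
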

\begin{proof}
In the case of $n=1$, we have 
\[
\aligned 
\frac{1}{2} & \left(
\det\begin{bmatrix} 
-2\overline{C_{L}} & \overline{C_{L}} \\ 
-2C_{L} & \overline{C_{-L}}
\end{bmatrix}
\det\begin{bmatrix} 
C_{-L} & 2\overline{C_{L}}  \\ 
-C_{L} & 2C_{L}
\end{bmatrix}
+
\det\begin{bmatrix} 
C_{-L} &-2\overline{C_{L}} \\ 
C_{L} & -2C_{L}
\end{bmatrix}
\det\begin{bmatrix} 
2\overline{C_{L}} & -\overline{C_{L}} \\ 
2C_{L} & \overline{C_{-L}}
\end{bmatrix}
\right) \\
&= -4 |C_L|^2 (|C_{-L}|^2-|C_L|^2) = -4 |C_L|^2 D_1(\mathcal{C}). 
\endaligned 
\]

Let $n\geq 2$. 
Multiplying each of the $(n+1)$th to $2n$th columns of 
$\det L_{n}^-({\mathcal{C}}; 2\mathfrak{c};1)$ 
and 
$\det L_{n}^-({\mathcal{C}}; 2\mathfrak{c};2n)$ 
by $-1$, 
and then, multiplying each of the $(n+1)$th to $2n$th rows of them by $-1$, 
\[
\aligned 
\,&
\frac{1}{2}
\Bigl( 
\det L_{n}^+({\mathcal{C}};- 2\mathfrak{c}; 1)
\det L_{n}^-({\mathcal{C}}; 2\mathfrak{c}; 2n)  
  +
\det L_{n}^+({\mathcal{C}};- 2\mathfrak{c}; 2n)
\det L_{n}^-({\mathcal{C}}; 2\mathfrak{c}; 1) \Bigr) \\
& =
2 \, 
\Bigl( 
\det L_{n}^+({\mathcal{C}};\mathfrak{c};1)
\det L_{n}^+({\mathcal{C}};\mathfrak{c}';2n) 
  +
\det L_{n}^+({\mathcal{C}};\mathfrak{c};2n)
\det L_{n}^+({\mathcal{C}};-\mathfrak{c}';1)  \Bigr), \\
\endaligned 
\]
where 
$\displaystyle{
\mathfrak{c}'= {}^{\rm t} 
\begin{bmatrix}
0 & \cdots & 0 & \overline{C_{L}} & -C_{L} & 0 & \cdots & 0 
\end{bmatrix}, 
}$ 
$\overline{C_L}$ and $-C_L$ are $n$th and $(n+1)$th entries, respectively. 
The right-hand side is equal to
\[
4 \vert C_L \vert^2
\Bigl( 
\det L_{n}^+({\mathcal{C}}; e_{n+1}; 1)
\det L_{n}^+({\mathcal{C}}; e_{n}; 2n) 
  -
\det L_{n}^+({\mathcal{C}}; e_{n}; 1)
\det L_{n}^+({\mathcal{C}}; e_{n+1}; 2n)
\Bigr)
\]
by expanding $\det L_{n}^+({\mathcal{C}};\mathfrak{c};1)$ 
and $\det L_{n}^+({\mathcal{C}};-\mathfrak{c}';1)$ along the first columns, 
and 
by expanding $\det L_{n}^+({\mathcal{C}};\mathfrak{c}';2n)$ 
and $\det L_{n}^+({\mathcal{C}};\mathfrak{c};2n)$ along the $2n$th columns, 
Therefore, what should be shown is the equality  
\begin{equation} \label{eq321}
\aligned 
\det & L_{n}^+({\mathcal{C}}; e_{n+1}; 1)
\det L_{n}^+({\mathcal{C}}; e_{n}; 2n)  -
\det L_{n}^+({\mathcal{C}}; e_{n}; 1)
\det L_{n}^+({\mathcal{C}}; e_{n+1}; 2n) \\
& =
|C_{L}|^2 \,D_{n-2}({\mathcal{C}})D_n({\mathcal{C}}). 
\endaligned 
\end{equation}

For a matrix $M$, we denote $M^{(a,b;c,d)}$ 
the matrix removing $a$-th and $b$-th rows 
and $c$-th and $d$-th columns from $M$, and set
\[
\Delta_{n-1}({\mathcal{C}}) 
:= \det\left( L_{n}^+(\mathcal{C})^{(1,n;1,n+1)} \right).
\]
Expanding $\det L_{n}^+({\mathcal{C}}; e_{n+1};1)$ and $\det L_{n}^+({\mathcal{C}}; e_{n};1)$ 
along the $1$st row, 
\[
\det L_{n}^+({\mathcal{C}}; e_{n+1}; 1)
=\overline{C_L}D_{n-1}(\mathcal{C}), 
\quad 
\det L_{n}^+({\mathcal{C}}; e_{n};1)
=\overline{C_L} \Delta_{n-1}(\mathcal{C}),   
\]
because the only non-zero component 
in the $1$st row is $\overline{C_L}$ in the $(n+1)$-th column.
Expanding $\det L_{n}^+({\mathcal{C}}; e_{n}; 2n)$ and $\det L_{n}^+({\mathcal{C}}; e_{n+1};2n)$ 
along the $2n$-th row, 
\[
\det L_{n}^+({\mathcal{C}}; e_{n}; 2n)
= C_L D_{n-1}(\mathcal{C}), 
\quad 
\det L_{n}^+({\mathcal{C}}; e_{n+1}; 2n)
= C_L \overline{\Delta_{n-1}(\mathcal{C})}, 
\]
because the only non-zero component 
in the $2n$-th row is $C_L$ in the $n$-th column.

From the above, the right-hand side of \eqref{eq321} is equal to
\[
|C_L|^2 
\left(
D_{n-1}(\mathcal{C})^2-|\Delta_{n-1}(\mathcal{C})|^2
\right), 
\]
but it is equal to $|C_L|^2 D_{n-2}(\mathcal{C})D_{n}(\mathcal{C})$ 
by \cite[p.41, (12)]{Go86}. Hence we complete the proof.  
\end{proof}

\begin{proposition} \label{prop312}
The matrices $H_n=H_n(\mathcal{C})$ defined by \eqref{eq110} 
are represented by the Schur--Cohn determinants 
as in \eqref{eq114} for all $1 \leq n \leq d$. 
\end{proposition}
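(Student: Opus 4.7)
The plan is to derive a closed-form expression for $H_n$ directly from \eqref{eq110}, then use Cramer's rule applied to \eqref{eq109} together with Lemma \ref{lem311} to exhibit the Schur--Cohn denominator $D_{n-1}(\mathcal{C}) D_n(\mathcal{C})$. I would first abbreviate $Q_j := \begin{bmatrix} \Re(z_j^+(1)) & \Im(z_j^+(1)) \\ -\Im(z_j^-(1)) & \Re(z_j^-(1)) \end{bmatrix}$ and $Q := Q_n \cdots Q_1$, and set $J_0 := \begin{bmatrix} 0 & 1 \\ -1 & 0 \end{bmatrix}$ so that $J_0^{-1} = -J_0$. The defining relation \eqref{eq110} then reads $Q J_0^{-1} H_n = J_0 Q$. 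Combined with the standard $2\times 2$ identity $M^{-1} = (\det M)^{-1} J_0 M^T J_0^{-1}$, this rearranges to the closed form
\begin{equation*}
H_n = -\frac{1}{\det Q}\, Q^T Q,
\end{equation*}
which is automatically symmetric, satisfies $\det H_n = 1$, and is positive definite exactly when $\det Q < 0$.

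Next I would compute each $\det Q_j$ by expanding $\det Q_j = \Re(z_j^+(1)\overline{z_j^-(1)}) = \frac{1}{2}(z_j^+(1)\overline{z_j^-(1)} + \overline{z_j^+(1)} z_j^-(1))$. Applying Cramer's rule to \eqref{eq109} and using the parity identity $\det L_j^+(\mathcal{C}) = \det L_j^-(\mathcal{C}) = D_j(\mathcal{C})$ noted in the introduction, the four Cramer determinants appearing on the left-hand side of \eqref{eq320} are
\begin{equation*}
\det L_j^\pm(\mathcal{C};\mp 2\mathfrak{c};1) = D_j(\mathcal{C})\, z_j^\pm(1), \qquad \det L_j^\pm(\mathcal{C};\mp 2\mathfrak{c};2j) = D_j(\mathcal{C})\, \overline{z_j^\pm(1)}.
\end{equation*}
Substituting these into \eqref{eq320} gives exactly $D_j(\mathcal{C})^2 \det Q_j$ on the left-hand side, so Lemma \ref{lem311} yields $\det Q_j = 4\,|C_L|^4 D_{j-2}(\mathcal{C})/D_j(\mathcal{C})$ for $j \geq 2$ and $\det Q_1 = -4\,|C_L|^2/D_1(\mathcal{C})$.

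Telescoping and using $D_0(\mathcal{C}) = 1$ produces $\det Q = -4^n |C_L|^{4n-2}/(D_{n-1}(\mathcal{C}) D_n(\mathcal{C}))$, whereupon substituting into the closed form for $H_n$ yields
\begin{equation*}
H_n = \frac{1}{D_{n-1}(\mathcal{C}) D_n(\mathcal{C})}\, \widetilde{H}_n, \qquad \widetilde{H}_n = \frac{(D_{n-1}(\mathcal{C}) D_n(\mathcal{C}))^2}{4^n |C_L|^{4n-2}}\, Q^T Q,
\end{equation*}
with $\widetilde{H}_n$ positive definite as a positive scalar multiple of the Gram matrix $Q^T Q$. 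As a byproduct $\det Q \neq 0$, so each $Q_j$ is invertible and $H_n$ itself is well-defined. The main technical obstacle I expect is the careful bookkeeping needed to align the non-zero entries $2\overline{C_L},\, 2C_L$ of the right-hand side of \eqref{eq109} (which sit at positions $n$ and $n+1$) with the column-replacement convention of Lemma \ref{lem311}, and to verify that the $(2j)$th entry of the solution vector in \eqref{eq109} is indeed $\overline{z_j^\pm(1)}$ so that the Cramer determinant at column $2j$ equals $D_j(\mathcal{C})\,\overline{z_j^\pm(1)}$.
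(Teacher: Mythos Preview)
Your proposal is correct and follows essentially the same route as the paper's proof: express $H_n$ as $-(\det M)^{-1}$ times a manifestly positive-definite matrix, compute the individual factor determinants via Cramer's rule applied to the linear systems, invoke Lemma~\ref{lem311}, and telescope. The paper carries this out with $M=P_n=(P_1^\ast)^{-1}\cdots(P_n^\ast)^{-1}$ coming from \eqref{eq211} and the connection matrices $P_k^\ast$ of Proposition~\ref{prop310}, whereas you work directly with $M=Q^{-1}$ built from \eqref{eq110}; these differ by scalar factors in each block (compare \eqref{eq315} with your $Q_j$), which is immaterial since $H_n=J_0 M J_0 M^{-1}$ is invariant under $M\mapsto cM$. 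Your Gram-matrix observation $H_n=-\tfrac{1}{\det Q}Q^TQ$ is slightly cleaner than the paper's explicit eigenvalue check for positive definiteness of $H_n'$, and your bookkeeping concern about matching the column-replacement convention of Lemma~\ref{lem311} with \eqref{eq109} is exactly the same verification the paper performs when applying Cramer's rule to \eqref{eq317}.
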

\begin{proof} Fix $n$ and write 
$
P_n
= 
(P_1^\ast)^{-1}
\cdots
(P_n^\ast)^{-1}
= [\begin{smallmatrix} a & b \\ c & d \end{smallmatrix}]
$ $(a,b,c,d \in \R)$. 
Then, by \eqref{eq211}, 
\[ 
H_n 
= -\frac{1}{\det P_n} 
\begin{bmatrix}
c^2+d^2 & 
-(ac+bd) \\
-(ac+bd) & 
a^2+b^2
\end{bmatrix} 
= -\frac{1}{\det P_n} \, 
H_n^\prime, 
\]
say. Neither eigenvalue of  $H_n^\prime$ is zero by Proposition \ref{prop310}. 
Furthermore, eigenvalues of $H_n^\prime$ are calculated as 
\[
\frac{1}{2}\left(
a^2+b^2+c^2+d^2 
\pm 
\sqrt{
(a^2+b^2+c^2+d^2)^2 - 4(ad - bc)^2
}
\right),
\]
and  
\[
(a^2+b^2+c^2+d^2)^2 - 4(ad - bc)^2
=((a-d)^2+(b+c)^2)((a+d)^2+(b-c)^2) \geq 0.
\]
Therefore, both eigenvalues of $H_n^\prime$ are positive, 
so $H_n^\prime$ is positive definite. 

On the other hand, by 
$P_n
= 
(P_1^\ast)^{-1}
\cdots
(P_n^\ast)^{-1}
$, 
$P_k^\ast
=
\begin{bmatrix}
\alpha_k^{\ast} & \beta_k^{\ast} \\
\gamma_k^{\ast} & \delta_k^{\ast}
\end{bmatrix}
$, and  
\eqref{eq315}, we obtain 
\[
\frac{1}{\det P_n}
= \prod_{k=1}^{n}(|u_k(0)-1|^2-|v_k(k-1)|^2) 
\left\slash 
\prod_{k=2}^{n}|u_{k-1}(0)-1|^2 
\right., 
\]
because the identity 
\[
|u|^2
\det
\begin{bmatrix}
\Re((z+w)/u) & \Im((z+w)/u) \\
-\Im((z-w)/u) & \Re((z-w)/u)
\end{bmatrix}
=
\det 
\begin{bmatrix}
\Re(z+w) & \Im(z+w) \\
-\Im(z-w) & \Re(z-w)
\end{bmatrix}
= |z|^2-|w|^2
\]
holds for general complex numbers $z,w,u$. 
Hence $H_n$ is written as 
\[
H_n= -\left( \prod_{k=1}^{n}(|u_k(0)-1|^2-|v_k(k-1)|^2) \right) \tilde{H}_n
\]
for some positive definite matrix $\tilde{H}_n$. Thus the proof is completed if 
\begin{equation} \label{eq322}
-\prod_{k=1}^{n}(|u_k(0)-1|^2-|v_k(k-1)|^2) 
= \frac{2^{2n}\,|C_{L}|^{2(2n-1)}}{D_{n-1}(\mathcal{C})D_n(\mathcal{C})} \quad (1 \leq n \leq d)  
\end{equation}
is proved. Applying Cramer's rule to \eqref{eq317},  
\[
\aligned 
u_{k}(0) \pm v_{k}(k-1) -1
& =
\frac{\det L_{k}^\pm({\mathcal{C}}; \mp 2\mathfrak{c};1) }
{D_k(\mathcal{C})}, \\
\overline{u_{k}(0) \pm v_{k}(k-1) -1}
& =
\frac{\det L_{k}^\pm({\mathcal{C}}; \mp 2\mathfrak{c};2k) }
{D_k(\mathcal{C})}. 
\endaligned 
\]
Therefore, 
\[
\aligned 
\,&|u_{k}(0)-1|^2-|v_{k}(k-1)|^2 
 = \frac{1}{2}\Bigl( 
(u_{k}(0) + v_{k}(k-1) -1)\overline{(u_{k}(0) - v_{k}(k-1) -1)}) \\
& \qquad \qquad \qquad \qquad \qquad \qquad \qquad  +
\overline{(u_{k}(0) + v_{k}(k-1) -1)}(u_{k}(0) - v_{k}(k-1) -1)
\Bigr) \\
& = \frac{1}{2D_k(\mathcal{C})^2}
\Bigl( 
\det L_{k}^+({\mathcal{C}}; - 2\mathfrak{c}; 1)
\det L_{k}^-({\mathcal{C}}; 2\mathfrak{c}; 2n) 
 +
\det L_{k}^+({\mathcal{C}}; -2 \mathfrak{c}; 2n)
\det L_{k}^-({\mathcal{C}}; 2\mathfrak{c}; 1) \Bigr). 
\endaligned 
\]
Using \eqref{eq320} on the right-hand side,  
\[
\aligned 
|u_{k}(0)-1|^2-|v_{k}(k-1)|^2
& = 
\begin{cases}
\displaystyle{ 
4\, |C_{L}|^4 \, 
\frac{D_{k-2}(\mathcal{C})}{D_k(\mathcal{C})}}, & k \geq 2, \\[10pt]
\displaystyle{ 
-4\, |C_{L}|^2 \, \frac{1}{D_1(\mathcal{C})} }, & k =1. 
\end{cases}
\endaligned 
\]
This implies \eqref{eq322}. 
\end{proof}

\begin{proposition} \label{prop313}
The pair of functions $(A(t,z),B(t,z))$ of \eqref{eq210} 
satisfies the boundary condition \eqref{eq112}. 
\end{proposition}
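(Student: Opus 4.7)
The plan is to verify the two halves of \eqref{eq112} in turn. For the initial condition, I would unwind the definitions: by \eqref{eq210}, $(A(0,z),B(0,z))=(A_1(0,z),B_1(0,z))$, and by \eqref{eq209} with $P_1=(P_1^\ast)^{-1}$, one has $[A_1(0,z),B_1(0,z)]^{{\rm t}}=(P_1^\ast)^{-1}[A_1^\ast(0,z),B_1^\ast(0,z)]^{{\rm t}}$. The connection formula \eqref{eq208} at $n=0$, already established in Proposition \ref{prop310}, gives $[A_1^\ast(0,z),B_1^\ast(0,z)]^{{\rm t}}=P_1^\ast[A_0^\ast(0,z),B_0^\ast(0,z)]^{{\rm t}}$, so the factors $(P_1^\ast)^{-1}$ and $P_1^\ast$ cancel and $(A(0,z),B(0,z))=(A_0^\ast(0,z),B_0^\ast(0,z))$. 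Reading off the explicit expressions for $A_0^\ast$ and $B_0^\ast$ recorded just below \eqref{eq207} at $t=0$ produces exactly $(A(z),B(z))$.

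For the terminal condition the central idea is to exploit the trivial behaviour of the quasi-canonical system at $z=0$. By Proposition \ref{prop309}, the change of variables \eqref{eq209}, and the connection formula of Proposition \ref{prop310} (which guarantees continuity of $(A(t,z),B(t,z))$ across every interior breakpoint $t=rn/2$), the pair $(A(t,z),B(t,z))$ satisfies the quasi-canonical system \eqref{eq103} on $[0,L)$. Setting $z=0$ makes the right-hand side of \eqref{eq103} vanish, so $A(t,0)$ and $B(t,0)$ are constant on $[0,L)$; combined with the initial condition just proved, $A(t,0)=A(0)$ and $B(t,0)=B(0)$ for every $t\in[0,L)$.

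I would then exploit the explicit form \eqref{eq113} on the final subinterval $[r(d-1)/2,L)$. With $n=d$ the sum over $j$ collapses to the single term $j=0$, so
\[
A(t,z)=\tfrac{1}{2}\bigl(a_d(L)\,e^{i(L-t)z}+\overline{a_d(L)}\,e^{-i(L-t)z}\bigr),
\]
and similarly for $B$. Letting $t\to L$ makes both exponentials equal $1$, so the limit is $\Re(a_d(L))$, a quantity independent of $z$. Evaluating at $z=0$ and invoking the constancy established in the preceding paragraph forces $\Re(a_d(L))=A(0)$; hence $\lim_{t\to L}A(t,z)=A(0)$ for every $z$, and the identical reasoning for $B$ finishes the argument.

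I do not anticipate any substantive obstacle. The structural reason the proof is so short is that \eqref{eq113} at $n=d$ carries only a single pair of exponentials, which makes the boundary value automatically $z$-independent and lets the $z=0$ trick pin the constant down. The alternative route -- directly computing $P_d[2\,\Re(C_L),-2\,\Im(C_L)]^{{\rm t}}$ by expanding the product $(P_1^\ast)^{-1}\cdots(P_d^\ast)^{-1}$ and matching it against $[A(0),B(0)]^{{\rm t}}$ -- would be much more combinatorially painful.
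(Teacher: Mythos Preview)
Your proposal is correct and follows essentially the same route as the paper's own proof: both halves are handled in the same way, with the initial condition coming from the connection formula \eqref{eq208} at $n=0$ together with the explicit $n=0$ formulas below \eqref{eq207}, and the terminal condition obtained by observing that the single-term expression at $n=d$ forces a $z$-independent limit, which is then identified via the constancy of $A(t,0),B(t,0)$ drawn from Proposition~\ref{prop309}. The only cosmetic difference is that the paper reads off the single-term form directly from Lemma~\ref{lem306} and \eqref{eq207} (writing $A_d^\ast(t,z)=C_Le^{i(L-t)z}+\overline{C_L}e^{-i(L-t)z}$ and then applying $P_d$), whereas you invoke the packaged form \eqref{eq113}; since \eqref{eq113} is itself just the combination of Lemma~\ref{lem306} with \eqref{eq209}, this is the same computation in different notation.
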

\begin{proof} 
The first half of \eqref{eq112} 
follows from definition \eqref{eq210} 
by \eqref{eq208} and \eqref{eq209} for $n=1$, 
since $A_{0}^\ast(0,z)=A(z)$ and $B_0^\ast(0,z)=B(z)$. 
We prove the second half of \eqref{eq112}. 
By definition \eqref{eq207} and Lemma \ref{lem306}, 
\[
A_{d}^\ast(t,z) = C_{L}e^{i(L-t)z} + \overline{C_{L}}e^{-i(L-t)z}, 
\quad 
-i B_{d}^\ast(t,z) = C_{L}e^{i(L-t)z} - \overline{C_{L}}e^{-i(L-t)z}.
\]
Therefore, 
\[
\lim_{t \to L} 
\begin{bmatrix} 
A(t,z) \\ B(t,z)
\end{bmatrix} 
= (P_1^\ast)^{-1} \cdots (P_d^\ast)^{-1} 
\begin{bmatrix} 
 \Re(C_{L}) \\  \Im(C_{L})
\end{bmatrix} 
\]
for fixed $z \in \C$ by definition \eqref{eq210}. 
In particular, the limit is independent of $z$, 
but $A(t,0)$ and $B(t,0)$ are constant function of $t$ 
by Proposition \ref{prop309} and definitions \eqref{eq209} and \eqref{eq210}, 
and hence $A(t,0)=A(0)$ and $B(t,0)=B(0)$. 
\end{proof}

\noindent
{\bf Proof of Theorem \ref{thm_01}.} 
As a summary of the above results, we obtain the following theorem 
which implies Theorem \ref{thm_01}. 

\begin{theorem} \label{thm_04}
Let $\mathcal{C} \in \C^{d+1}$ be as in \eqref{eq105} and define $E=E_\mathcal{C}$ by \eqref{eq106}. 
Suppose that $D_d(\mathcal{C})\not=0$. 
Then, 
\begin{enumerate}
\item $A(t,z)$ and $B(t,z)$ are well-defined and continuous on $[0,L)$  with respect to $t$, 
\item $A(t,z)$ and $B(t,z)$ are continuously differentiable on $(r(n-1)/2,rn/2)$ 
with respect to $t$ for every $1 \leq n \leq d$, 
\item the left-sided limit $\lim_{t \nearrow rn/2}(A(t,z), B(t,z))$ 
defines entire functions of $z$ for every $1 \leq n \leq d$, 
\item $A(t,z)$ and $B(t,z)$ have the forms \eqref{eq113}. 
\item matrices $H_n$ of \eqref{eq110} are well-defined for all $1 \leq n \leq d$ 
and satisfy \eqref{eq114},
\item the pair of functions $(A(t,z),B(t,z))$ defined in \eqref{eq210} satisfies the system \eqref{eq103} 
associated with $H(t)$ defined in \eqref{eq110}, 
\item the pair of functions $(A(t,z),B(t,z))$ satisfies the boundary condition \eqref{eq112}.
\end{enumerate}
\end{theorem}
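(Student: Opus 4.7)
The plan is to assemble Theorem \ref{thm_04} as a collation of the results already proved in Section \ref{section_4}. Under the standing hypothesis $D_d(\mathcal{C}) \neq 0$, Lemmas \ref{lem301}--\ref{lem304} ensure that $E_\mathcal{C}$ has no real zeros, that $\mathsf{E}$ is invertible on $V_t$, and that $\mathsf{I} \pm \Theta\mathsf{J}\mathsf{P}_n$ is invertible on $V_{t,n} + \Theta\mathsf{J}\mathsf{P}_n V_{t,n}$ for every $0 \le n \le d$. From this I deduce that \eqref{eq206} admits unique solutions $\phi_n^\pm$, whence the entire-in-$z$ functions $A_n^\ast(t,z)$ and $B_n^\ast(t,z)$ of \eqref{eq207} are well-defined and extend smoothly in $t$ to the whole real line through formula \eqref{eq305}.

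Next I would glue the pieces $(A_n^\ast,B_n^\ast)_{1\le n\le d}$ into global functions $(A(t,z),B(t,z))$ on $[0,L)$ by means of the connection formula of Proposition \ref{prop310} together with the prescriptions \eqref{eq209}--\eqref{eq210}. Because each $P_n^\ast$ is real and invertible, the concatenation is continuous in $t$, which is assertion (1); smoothness on each open subinterval (2) and entirety in $z$ of the one-sided limits (3) are inherited directly from the corresponding properties of $(A_n^\ast,B_n^\ast)$. For the explicit form (4), I would substitute \eqref{eq307} of Lemma \ref{lem306} into \eqref{eq312} and act by $P_n$, producing coefficients $a_n(k)$ and $b_n(k)$ that are real-linear combinations of the quantities $r_n(k)=p_n^+(k)+\overline{q_n^+(k)}$ and their conjugates.

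Assertion (5), the Schur--Cohn representation \eqref{eq114}, is exactly Proposition \ref{prop312}, whose argument reduces \eqref{eq114} to the identity \eqref{eq322} via Cramer's rule on \eqref{eq317} combined with the determinantal identity \eqref{eq320} of Lemma \ref{lem311}. For the differential equation (6), Proposition \ref{prop309} furnishes
\[
-\frac{d}{dt}\begin{bmatrix} A_n^\ast(t,z) \\ B_n^\ast(t,z) \end{bmatrix}
= z \begin{bmatrix} 0 & 1 \\ -1 & 0 \end{bmatrix}
\begin{bmatrix} A_n^\ast(t,z) \\ B_n^\ast(t,z) \end{bmatrix}
\]
on each open subinterval, and conjugating by $P_n$ through \eqref{eq209} turns the middle matrix into the product $JH_n$ with $J$ and $H_n$ as in \eqref{eq103} and \eqref{eq211}, respectively. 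Finally, assertion (7), the boundary condition \eqref{eq112}, is Proposition \ref{prop313}.

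The only step that is not pure book-keeping is reconciling the abstract definition \eqref{eq211} of $H_n$ through the connection matrix $P_n^\ast$ with the explicit formula \eqref{eq110} involving the solutions of \eqref{eq109}; this identification is carried out inside Propositions \ref{prop310} and \ref{prop312} by matching \eqref{eq315}--\eqref{eq319} with \eqref{eq316}--\eqref{eq317}. With that matching granted, Theorem \ref{thm_04} is merely a numbered reorganisation of the propositions above, and Theorem \ref{thm_01} then reads off by extracting items (4), (5), (6), and (7).
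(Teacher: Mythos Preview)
Your proposal is correct and mirrors the paper's own proof: Theorem \ref{thm_04} is precisely a numbered collation of Lemmas \ref{lem301}--\ref{lem308} and Propositions \ref{prop309}, \ref{prop310}, \ref{prop312}, \ref{prop313}, with items (1)--(3) read off from \eqref{eq209}, \eqref{eq210}, \eqref{eq312} and the connection formula, item (4) from substituting \eqref{eq307} into \eqref{eq312} and acting by $P_n$, item (5) from Propositions \ref{prop310} and \ref{prop312}, item (6) from conjugating Proposition \ref{prop309} by $P_n$ via \eqref{eq211}, and item (7) from Proposition \ref{prop313}. One small sharpening for (4): the paper records the coefficients explicitly as $a_n(k)=(\alpha_n^{\ast\ast}+i\beta_n^{\ast\ast})r_n(k)$ and $b_n(k)=(\gamma_n^{\ast\ast}+i\delta_n^{\ast\ast})r_n(k)$, i.e.\ complex scalar multiples of $r_n(k)$ rather than real-linear combinations of $r_n(k)$ and $\overline{r_n(k)}$.
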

\begin{proof}
(1), (2), and (3) are consequences of  \eqref{eq209}, \eqref{eq210}, \eqref{eq312}, 
and Proposition \ref{prop310}. 
For (4), we put 
$P_n=
\begin{pmatrix} 
\alpha_n^{\ast\ast} & \beta_n^{\ast\ast} \\ 
\gamma_n^{\ast\ast} & \delta_n^{\ast\ast}
\end{pmatrix}$ and 
\[
a_n(k) = (\alpha_n^{\ast\ast} + i\beta_n^{\ast\ast})r_n(k), \quad 
b_n(k) = (\gamma_n^{\ast\ast} + i\delta_n^{\ast\ast})r_n(k), 
\]
where 
$r_n(k)=p_n^{+}(k) + \overline{q_n^{+}(k)} = p_n^{-}(k) - \overline{q_n^{-}(k)}$ 
as in the proof of Proposition \ref{prop309}. 
Then, we have \eqref{eq113} by \eqref{eq209}, \eqref{eq210}, \eqref{eq312}, 
and the changing of index $k=(L-rj-r+1)/r$. 
(5) follows from Propositions \ref{prop310}, \ref{prop310}, and \ref{prop312}. 
(6) is a consequence of Proposition \ref{prop309}, \eqref{eq209}, and \eqref{eq210}. 
In fact, 
\[
-\frac{d}{dt}
\begin{bmatrix} A_n(t,z) \\ B_n(t,z) \end{bmatrix}
= z \begin{bmatrix} 0 & -1 \\ 1 & 0 \end{bmatrix} 
H_n
\begin{bmatrix} A_n(t,z) \\ B_n(t,z) \end{bmatrix}
\]
for every $r(n-1)/2 \leq t < rn/2$ and $1 \leq n \leq d$ by Proposition \ref{prop309}.  
This implies \eqref{eq103} for $H(t)$ defined by \eqref{eq111}. 
(7) is a consequence of Proposition \ref{prop313}. 
\end{proof}

\section{Proofs of Theorems \ref{thm_02} and \ref{thm_03}
} \label{section_5}

To prove Theorems \ref{thm_02} and \ref{thm_03}, 
we prepare a proposition. 
The proof about it below 
is the almost same as the argument 
in the literature on canonical systems; 
for example, the proof of equation (2.4) and Lemma 2.1, 
and Step 1 of the proof of Theorem 5.1 in Dym \cite{Dym70}. 
However, we purposely give the detailed proof 
to confirm that the positive semidefiniteness of the Hamiltonian, 
which is usually assumed in the theory of canonical systems, 
is not necessary for the proof as well as \cite[Proposition 5.1]{Su18}. 
\begin{proposition} \label{prop401} 
Let $H(t)$ and $(A,B)$ be as in Theorem \ref{thm_02}, 
and write 
$\displaystyle{H_n=\begin{bmatrix} \alpha_n & \beta_n \\ \beta_n & \gamma_n \end{bmatrix}}$ 
for $1 \leq n \leq d$. Then the solution $(A(t,z),B(t,z))$ mentioned in  Theorem \ref{thm_02} 
exists and it is represented as 
\begin{equation} \label{eq401}
\scalebox{0.85}{$
\aligned
\begin{bmatrix}
A(t,z) \\ B(t,z)
\end{bmatrix}
& = 
\begin{bmatrix}
\cos((rn/2-t)z)-\beta_n \sin((rn/2-t)z) & 
- \gamma_n \sin((rn/2-t)z) \\ 
\alpha_n \sin((rn/2-t)z) & 
\cos((rn/2-t)z)+\beta_n \sin((rn/2-t)z)
\end{bmatrix}  \\
& \quad \times
\begin{bmatrix}
\cos((r/2)z)-\beta_{n+1} \sin((r/2)z) & 
- \gamma_{n+1} \sin((r/2)z) \\ 
\alpha_{n+1} \sin((r/2)z) & 
\cos((r/2)z)+\beta_{n+1} \sin((r/2)z)
\end{bmatrix} \\
& \quad \cdots \times
\begin{bmatrix}
\cos((r/2)z)-\beta_d \sin((r/2)z) & 
- \gamma_d \sin((r/2)z) \\ 
\alpha_d \sin((r/2)z) & 
\cos((r/2)z)+\beta_d \sin((r/2)z)
\end{bmatrix} 
\begin{bmatrix}
A \\ B
\end{bmatrix}
\endaligned 
$}
\end{equation}
for $r(n-1)/2 \leq t < rn/2$ and $1 \leq n \leq d$, 
where the product of quadratic matrices on the right-hand side 
consists of only the first matrix if $n=d$.  
In particular, for any $0 \leq t <L$, 
there exists a quadratic matrix-valued function $M(t,z)$ 
consisting of entire functions of $z$ 
such that 
\begin{equation} \label{eq402}
\begin{bmatrix} 
A(t,z) \\ B(t,z)
\end{bmatrix} 
=
M(t,z)
\begin{bmatrix}
A \\ B
\end{bmatrix}, 
\end{equation}
holds and $\det M(t,z)=1$. 
\end{proposition}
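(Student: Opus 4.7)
The plan is to solve the quasi-canonical system explicitly on each sub-interval where $H(t)$ is constant, and then chain the solutions together using continuity at the interior breakpoints and the prescribed value at $t=L$. On the interval $[r(n-1)/2,rn/2)$ the system $-Y'(t)=z\,JH_n\,Y(t)$, with $J=\left[\begin{smallmatrix}0&-1\\1&0\end{smallmatrix}\right]$, has constant coefficients, so its solutions are matrix exponentials. The crucial observation is that, since $H_n\in\mathrm{Sym}_2(\R)\cap\mathrm{SL}_2(\R)$, a direct computation yields $(JH_n)^2=-\det(H_n)\,I=-I$. Hence $JH_n$ acts like a square root of $-1$ in this two-dimensional representation, and
$$
\exp\bigl(z\tau\,JH_n\bigr)=\cos(z\tau)\,I+\sin(z\tau)\,JH_n
$$
for every $\tau\in\R$ and every $z\in\C$.

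Next I would propagate the solution backwards in $t$. For $t_1<t_2$ inside a single piece with $H(t)\equiv H_n$, the identity above with $\tau=t_2-t_1$ gives $Y(t_1)=\bigl[\cos(z\tau)\,I+\sin(z\tau)\,JH_n\bigr]Y(t_2)$; writing the right-hand side in terms of $\alpha_n,\beta_n,\gamma_n$ reproduces exactly the factor matrix appearing in \eqref{eq401}. Starting from $Y(L)={}^{\mathrm t}[A\ B]$ and concatenating such propagators across the breakpoints $t=rk/2$ (with $\tau=r/2$ on each completed piece and $\tau=rn/2-t$ on the piece currently containing $t$) yields the product formula \eqref{eq401}. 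The resulting $(A(t,z),B(t,z))$ is automatically continuous in $t$, and it is the unique solution because on each sub-interval a linear constant-coefficient ODE has a unique solution given a boundary value at either endpoint. Notably, positive semidefiniteness of the $H_n$ is nowhere used: the argument goes through verbatim in the quasi-canonical setting.

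For the last assertion, each factor matrix in \eqref{eq401} has determinant
$$
\cos^2(z\tau)+(\alpha_n\gamma_n-\beta_n^2)\sin^2(z\tau)=\cos^2(z\tau)+\det(H_n)\sin^2(z\tau)=1,
$$
so the product $M(t,z)$ in \eqref{eq402} satisfies $\det M(t,z)=1$ identically. Entireness in $z$ is immediate, as every entry is a finite sum of products of $\cos(z\tau)$ and $\sin(z\tau)$ with $\tau$ real. The only nontrivial bookkeeping --- which I expect to be the main source of sign errors rather than a genuine obstacle --- is getting the direction of propagation right so that the signs of $\beta_n\sin(z\tau)$ in the diagonal entries of each factor match \eqref{eq401}, and verifying that the concatenation glues consistently at the interior breakpoints $t=rk/2$.
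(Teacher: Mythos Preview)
Your proof is correct and structurally identical to the paper's: both solve the constant-coefficient system on each sub-interval $[r(n-1)/2,\,rn/2)$ and concatenate the resulting propagators across the breakpoints, then read off $\det M(t,z)=1$ from $\alpha_n\gamma_n-\beta_n^2=1$. The only difference is in how the single-interval propagator is computed. The paper writes the solution as a Neumann (Dyson) series of iterated integrals, bounds the $k$-th term by $2^{k-1}C^k(t_1-t_0)^k/k!$ to obtain absolute convergence, and then sums the series explicitly when $H$ is constant. You instead exploit the algebraic identity $(JH_n)^2=-\det(H_n)\,I=-I$, which collapses the matrix exponential to $\cos(z\tau)\,I+\sin(z\tau)\,JH_n$ in one line. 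Your route is shorter and avoids the series machinery altogether, since existence and uniqueness for a constant-coefficient linear ODE are immediate; the paper's route has the virtue of making completely explicit, via the convergence estimate, that no sign or definiteness assumption on $H$ enters anywhere---which is the point the author wishes to stress. Both arrive at the same factor matrices and the same determinant computation \eqref{210319_1}.
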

\begin{proof} 
By definition, $H(t)$ is integrable on $[t_0, t_1]$ for any $0 \leq t_0 <t_1<L$. 
Hence, 
\begin{equation} \label{eq403}
\scalebox{0.95}{$
\aligned
\begin{bmatrix}
A(t_0,z) \\ B(t_0,z)
\end{bmatrix} 
& =
\left[
I
+
z \int_{t_0}^{t_1}
J(s_1) \, ds_1
+
z^2 \int_{t_0}^{t_1}\int_{s_1}^{t_1}
J(s_1)J(s_2)
\, ds_2 ds_1 \right. \\
&\qquad \qquad \left.
+
z^3  \int_{t_0}^{t_1}\int_{s_1}^{t_1}\int_{s_2}^{t_1}
J(s_1)J(s_2)J(s_3) 
\, ds_3 ds_2 ds_1
+
\cdots 
\right]
\begin{bmatrix}
A(t_1,z) \\ B(t_1,z)
\end{bmatrix},
\endaligned
$}
\end{equation}
where $I=I_2$ and $\displaystyle{
J(t) = 
\begin{bmatrix}
0 & - 1 \\ 1 & 0 
\end{bmatrix}H(t)
}$. 
Taking $C=\max\{ |\alpha_n|,\,|\beta_n|,\,|\gamma_n|~:~1\leq n \leq d\}$ 
and by using the formula 
\[
\int_{t_0}^{t_1}\int_{s_1}^{t_1}\int_{s_2}^{t_1} \cdots \int_{s_{k-1}}^{t_1} 
1 \, ds_k \cdots ds_2 ds_1 = \frac{1}{k!}(t_1-t_0)^k, 
\]
we obtain 
\[
\left|
\left[\int_{t_0}^{t_1}\int_{s_1}^{t_1}\int_{s_2}^{t_1} \cdots \int_{s_{k-1}}^{t_1} 
J(s_1)\cdots J(s_k) \, ds_k \cdots ds_2 ds_1 \right]_{ij} \right| \leq  
2^{k-1}C^k
\frac{1}{k!}(t_1-t_0)^k
\]
for every $1 \leq i,j \leq 2$, where $[M]_{ij}$ means the $(i,j)$-entry of a matrix $M$. 
This estimate implies that 
the right-hand side of \eqref{eq403} 
converges absolutely and uniformly 
if $z$ lies in a bounded region. 
Suppose that 
$H(t)=\Bigl[\begin{smallmatrix} \alpha & \beta \\ \beta & \gamma  \end{smallmatrix}\Bigr]$ 
(a constant matrix) 
with $\alpha\gamma-\beta^2=1$ for $t_0 \leq s \leq t_1$. 
Then the series of  integrals in \eqref{eq403} is calculated as  
\[
\begin{bmatrix}
\cos((t_1-t_0)z)-\beta \sin((t_1-t_0)z) & 
- \gamma \sin((t_1-t_0)z) \\ 
\alpha \sin((t_1-t_0)z) & 
\cos((t_1-t_0)z)+\beta \sin((t_1-t_0)z)
\end{bmatrix}.
\]
Hence we have 
\begin{equation*} 
\scalebox{0.9}{$
\aligned
\, 
&
\begin{bmatrix}
A(t_0,z) \\ B(t_0,z)
\end{bmatrix} 
= 
\begin{bmatrix}
\cos((t_1-t_0)z)-\beta \sin((t_1-t_0)z) & 
- \gamma \sin((t_1-t_0)z) \\ 
\alpha \sin((t_1-t_0)z) & 
\cos((t_1-t_0)z)+\beta \sin((t_1-t_0)z)
\end{bmatrix}
\begin{bmatrix}
A(t_1,z) \\ B(t_1,z)
\end{bmatrix}.
\endaligned
$}
\end{equation*}
Therefore, we obtain \eqref{eq401} for $t \geq r(d-1)/2$ by taking the limit $t_1 \to L$. 
Also, the determinant of the matrix on the right-hand side is
\begin{equation} \label{210319_1}
\aligned 
\det & \begin{bmatrix}
\cos((t_1-t_0)z)-\beta \sin((t_1-t_0)z) & 
- \gamma \sin((t_1-t)z) \\ 
\alpha \sin((t_1-t_0)z) & 
\cos((t_1-t_0)z)+\beta \sin((t_1-t_0)z)
\end{bmatrix} \\
& 
= \cos^2((t_1-t_0)z)+(\alpha\gamma-\beta^2)\sin^2((t_1-t_0)z)=1. 
\endaligned  
\end{equation}
Following the above case, 
applying \eqref{eq403} to $r(d-2)/2 \leq t_0 < r(d-1)/2$ and $t_1=r(d-1)/2$ 
and using the result for $t \geq r(d-1)/2$, 
we obtain \eqref{eq401} for $t \geq r(d-2)/2$. 
By repeating this process, \eqref{eq401} is obtained for all $0 \leq t <L$.  
\end{proof}

\subsection{Proof of Theorem \ref{thm_02}.}  \label{section_5_3} 
To prove \eqref{eq113}, we put 
\begin{equation} \label{0829_4}
\scalebox{0.95}{$\displaystyle{
\begin{bmatrix} 
M_{11}^{n}(z) & M_{12}^{n}(z) \\ 
M_{21}^{n}(z) & M_{22}^{n}(z)
\end{bmatrix} 
=
\prod_{k=n+1}^{d}
\begin{bmatrix}
\cos((r/2)z)-\beta_k \sin((r/2)z) & 
- \gamma_k \sin((r/2)z) \\ 
\alpha_k \sin((r/2)z) & 
\cos((r/2)z)+\beta_k \sin((r/2)z)
\end{bmatrix} 
}$}
\end{equation}
for $1 \leq n \leq d$. 
Then \eqref{eq401} implies 
\begin{equation} \label{eq404}
\scalebox{0.95}{$
\aligned 
A(t,z) & = \cos((rn/2-t)z)\Bigl[ AM_{11}^{n}(z)+BM_{12}^{n}(z) \Bigr] \\
& \quad - \sin((rn/2-t)z) \Bigl[ A(\beta_n M_{11}^{n}(z)+\gamma_n M_{21}^{n}(z))
+ B(\beta_n M_{12}^{n}(z) + \gamma_n M_{22}^{n}(z))\Bigr], \\
B(t,z) & = \cos((rn/2-t)z) \Bigl[ AM_{21}^{n}(z)+BM_{22}^{n}(z) \Bigr] \\
& \quad + \sin((rn/2-t)z) \Bigl[ A(\alpha_n M_{11}^{n}(z)+\beta_n M_{21}^{n}(z)) 
+B(\alpha_n M_{12}^{n}(z) + \beta_n M_{22}^{n}(z))\Bigr] 
\endaligned 
$}
\end{equation}
for $r(n-1)/2 \leq t < rn/2$ and $1 \leq n \leq d$. 
Putting $X=e^{i(r/2)z}$, $X^\ast=e^{i(rn/2-t)z}$, $Y=X^{-1}$, and $Y^\ast=(X^{\ast})^{-1}$, 
we obtain 
\begin{equation} \label{eq405}
M_{rs}^{n}(z) = \sum_{\nu=1}^{d-n} 
\left[ N_{rs}^{n}(\nu)X^{\nu}Y^{d-n-\nu}+ \overline{N_{rs}^{n}(\nu)}X^{d-n-\nu}Y^{\nu} \right] 
\end{equation}
for $r,s \in \{1,2\}$ by induction for $n \geq 1$, 
where $N_{rs}^{n}(\nu)$ are complex numbers depending only on the set $\{H_n\}_{1 \leq n \leq d}$, 
and
\begin{equation} \label{eq406}
\cos((rn/2-t)z) = \frac{1}{2}(X^\ast+Y^\ast), \quad \sin((rn/2-t)z) = -\frac{i}{2}(X^\ast-Y^\ast). 
\end{equation}
Substituting \eqref{eq405} and \eqref{eq406} into \eqref{eq404} 
and then carrying out a simple calculation,  
we obtain \eqref{eq113}. 

By \eqref{eq103}, $A(t,0)$ and $B(t,0)$ are constant function of $t$. 
Hence $E(t,0)=A(t,0)-iB(t,0)=A-iB$ by the boundary condition at $t=L$.  
Suppose that $E(0,z_0)=0$ for some real number $z_0$. 
Then $A(0,z_0)=B(0,z_0)=0$, and thus it should be $(A,B)=(0,0)$ by \eqref{eq402}. 
It is a contradiction. Hence $E(0,z)$ has no real zeros. 
\hfill $\Box$

\subsection{Proof of Theorem \ref{thm_03}.}  \label{section_5_4} 
From \eqref{eq405} the leading term of 
\[
\begin{bmatrix} 
M_{11}^{d-n}(z) & M_{12}^{d-n}(z) \\ 
M_{21}^{d-n}(z) & M_{22}^{d-n}(z)
\end{bmatrix} 
\begin{bmatrix} 
A \\ B
\end{bmatrix} 
\]
with respect to $X$ and $Y$ is written as
\[
\begin{bmatrix} 
P_n X^n + \overline{P_n} Y^n \\ 
Q_n X^n + \overline{Q_n} Y^n
\end{bmatrix}
\]
for some complex numbers $P_n$ and $Q_n$. 
Because 
\[
\scalebox{0.9}{$
\aligned 
\begin{bmatrix} 
M_{11}^{d-n-1}(z) & M_{12}^{d-n-1}(z) \\ 
M_{21}^{d-n-1}(z) & M_{22}^{d-n-1}(z)
\end{bmatrix} 
& =
\begin{bmatrix}
\frac{X+Y}{2}+i\beta_{d-n} \frac{X-Y}{2} & 
 i\gamma_{d-n} \frac{X-Y}{2} \\ 
-i\alpha_{d-n} \frac{X-Y}{2} & 
\frac{X+Y}{2}-i\beta_{d-n} \frac{X-Y}{2}
\end{bmatrix} 
\begin{bmatrix} 
M_{11}^{d-n}(z) & M_{12}^{d-n}(z) \\ 
M_{21}^{d-n}(z) & M_{22}^{d-n}(z)
\end{bmatrix}, 
\endaligned 
$}
\]
we have 
\[
\aligned 
\begin{bmatrix} 
P_{n} \\ Q_{n} 
\end{bmatrix} 
&= \frac{1}{2}
\begin{bmatrix} 
1+i\beta_{d-n+1} & i\gamma_{d-n+1} \\ 
-i\alpha_{d-n+1} & 1-i\beta_{d-n+1}
\end{bmatrix} 
\begin{bmatrix} 
P_{n-1} \\ Q_{n-1} 
\end{bmatrix} \\
&= \frac{1}{2}
\left(
\begin{bmatrix} 
1 & 0 \\ 0 & 1 
\end{bmatrix} 
-i
\begin{bmatrix} 
0 & -1 \\ 1 & 0 
\end{bmatrix} 
\begin{bmatrix} 
\alpha_{d-n+1} & \beta_{d-n+1} \\ 
\beta_{d-n+1} & \gamma_{d-n+1}
\end{bmatrix} 
\right)
\begin{bmatrix} 
P_{n-1} \\ Q_{n-1} 
\end{bmatrix}. 
\endaligned 
\]
The leading term of $E_{d-n+1}(t,z)=A_{d-n+1}(t,z)-iB_{d-n+1}(t,z)$ 
with $t=r(d-n)/2$ is 
\[
(P_n X^n + \overline{P_n} Y^n) -i(Q_n X^n + \overline{Q_n} Y^n). 
\]
Therefore, the coefficient of $X^n$ (resp. $Y^n$) is zero 
if  $(P_n,\,Q_n)$ is proportional to $(1,\,-i)$ (resp. $(1,\,i)$), 
and both are zero if $(P_n,Q_n)=(0,0)$. 
Applying this to $n=d$ gives the desired conclusion. 

The latter half of the theorem is a consequence of 
Schur-Cohn test and Theorems \ref{thm_01} and \ref{thm_02}, 
since $H$ of Theorem \ref{thm_02} must be equal to $H$ of Theorem \ref{thm_01} 
defined for $E_f(z)=e^{irdz/2}f(e^{-irz})$ by Proposition \ref{prop401}. 
\hfill $\Box$

\section{Inductive construction} \label{section_7}

To state the result, we introduce special matrices 
$\mathfrak{P}_n(H)$ and $\mathfrak{Q}_n$ as follows. 
For $n=0$, we define 
\[
\mathfrak{P}_0
=
\mathfrak{P}_0(H)
=
\left[
\begin{array}{cc|cc}
1 & 0 & 0 & 0 \\ 
0 & 1 & 0 & 0 \\ \hline 
0 & 0 & 1 & 0 \\
0 & 0 & 0 & 1 
\end{array}\right], 
\quad 
\mathfrak{Q}_0
=
\left[
\begin{array}{cc|cc|cc|cc}
1 & 0 & 1 & 0 & 0 & 0 & 0 & 0 \\ 
0 & 1 & 0 & 1 & 0 & 0 & 0 & 0 \\ \hline 
0 & 0 & 0 & 0 & 1 & 0 & 1 & 0 \\
0 & 0 & 0 & 0 & 0 & 1 & 0 & 1 
\end{array}\right]. 
\]
For $n \geq 1$ and $\displaystyle{
H=\begin{bmatrix} \alpha  & \beta \\ \beta & \gamma \end{bmatrix}}$, 
we define
\[
\mathfrak{P}_k(H)
=
\left[
\begin{array}{c|c|c|c}
I_{k+2,k+1}^+ & I_{k+2,k+1}^- & \mathbf{0} & \mathbf{0}  \\ \hline
\mathbf{0} & \mathbf{0} & I_{k+2,k+1}^+ & I_{k+2,k+1}^- \\ \hline
(1-i\beta)\cdot\mathbf{0}I_k & \mathbf{0}_{k,k+1} & (-i\gamma)\cdot\mathbf{0}I_k & \mathbf{0}_{k,k+1} \\  
 \mathbf{0}_{k,k+1} & (-i\alpha)\cdot I_k\mathbf{0}  & \mathbf{0}_{k,k+1} & (1-i\beta)\cdot I_k\mathbf{0} \\
\end{array}\right], 
\]
\[
\mathfrak{Q}_k
=
\left[
\begin{array}{c|c|c|c}
I_{k+2} & I_{k+2} & \mathbf{0} & \mathbf{0}  \\ \hline
\mathbf{0} & \mathbf{0} & I_{k+2} & I_{k+2} \\ \hline
\mathbf{0}_{2k,k+2} & \mathbf{0}_{2k,k+2} & \mathbf{0}_{2k,k+2} & \mathbf{0}_{2k,k+2} \\  
\end{array}\right], 
\]
where 
\[
I_{k+2,k+1}^+ = \begin{bmatrix} I_{k+1} \\ \mathbf{0}_{1,k+1} \end{bmatrix}, \quad 
I_{k+2,k+1}^- = \begin{bmatrix} \mathbf{0}_{1,k+1} \\ I_{k+1} \end{bmatrix}, \quad 
\mathbf{0}I_k=\begin{bmatrix} \mathbf{0}_{k,1} & I_k \end{bmatrix}, \quad 
I_k\mathbf{0}=\begin{bmatrix} I_k & \mathbf{0}_{k,1} \end{bmatrix}.
\]
The matrices $\mathfrak{P}_k(H)$ are invertible if $\det H=1$, 
because 
\[
\det \mathfrak{P}_k(H) 
= \det\begin{bmatrix} 1 - i \beta & i \gamma \\ i\alpha & 1 -i\beta \end{bmatrix}^k 
= (\alpha\gamma-(\beta+i)^2)^k. 
\]
Using these matrices, an inductive formula for coefficient of 
$A_n(t,z)$ and $B_n(t,z)$ is described as follows. 

\begin{proposition} \label{prop501} 
Let $\mathcal{C} \in \C^{d+1}$ be as in \eqref{eq105} and define $E=E_\mathcal{C}$ by \eqref{eq106}. 
Suppose that $D_d(\mathcal{C})\not=0$. 
Put 
$r_n(k)=p_n^{+}(k) + \overline{q_n^{+}(k)} = p_n^{-}(k) - \overline{q_n^{-}(k)}$,  
$P_n=
\begin{bmatrix} 
\alpha_n^{\ast\ast} & \beta_n^{\ast\ast} \\ 
\gamma_n^{\ast\ast} & \delta_n^{\ast\ast}
\end{bmatrix}$, and 
\[
a_n(k) = (\alpha_n^{\ast\ast} + i\beta_n^{\ast\ast})r_n(k), \quad 
b_n(k) = (\gamma_n^{\ast\ast} + i\delta_n^{\ast\ast})r_n(k), 
\]
as in Proposition \ref{prop309} and its proof.  
For $0 \leq n \leq d$, 
define the column vectors $A_n^{\ast\ast}$ and $B_n^{\ast\ast}$ of length $d-n+1$  by
\[
\aligned 
A_{n}^{\ast\ast} & = 
~{}^{\rm t}\!\begin{bmatrix} 
a_{n}(\tfrac{d-r+1}{2}) & a_{n}(\tfrac{d-r+1}{2}-1)& \cdots &  a_{n}(-\tfrac{d+r-1}{2} + n)
\end{bmatrix}, \\
B_{n}^{\ast\ast} & = 
~{}^{\rm t}\!\begin{bmatrix} 
b_{n}(\tfrac{d-r+1}{2}) & b_{n}(\tfrac{d-r+1}{2}-1)& \cdots &  b_{n}(-\tfrac{d+r-1}{2} + n)
\end{bmatrix}
\endaligned 
\]
for $1 \leq n \leq d$ and 
\begin{equation} \label{eq501}
A_{0}^{\ast\ast}=B_{0}^{\ast\ast}  = \frac{1}{2} 
~{}^{\rm t}\!\begin{bmatrix} 
C_{(d-r+1)/2} & C_{(d-r+1)/2-1} & \cdots & C_{-(d+r-1)/2} 
\end{bmatrix}. 
\end{equation}
Define the column vectors $\Omega_n$ of length $4(d-n+1)$ by 
\begin{equation} \label{eq502}
\Omega_{n} = 
\begin{bmatrix} A_{n}^{\ast\ast} \\ J_{d-n+1}\overline{A_{n}^{\ast\ast}} \\ 
B_{n}^{\ast\ast} \\ J_{d-n+1}\overline{B_{n}^{\ast\ast}} \end{bmatrix}  \quad (1 \leq n \leq d), 
\quad 
\Omega_0 = 
\begin{bmatrix} A_{0}^{\ast\ast} \\ J_{d+1}\overline{A_{0}^{\ast\ast}} \\ 
B_{0}^{\ast\ast} \\ J_{d+1}\overline{B_{0}^{\ast\ast}} \end{bmatrix}. 
\end{equation}
Then, vectors $\Omega_n$ satisfies the linear relation 
\begin{equation} \label{eq503}
\mathfrak{P}_{d-(n+1)}(H_{n+1}) \Omega_{n+1}=\mathfrak{Q}_{d-(n+1)} \Omega_{n}
\end{equation}
for every $0 \leq n \leq d-1$, where $H_n$ is of \eqref{eq211}.
\end{proposition}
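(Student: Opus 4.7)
The plan is to derive the matrix relation \eqref{eq503} by combining two ingredients about the pair $(A(t,z),B(t,z))$ delivered by Theorem \ref{thm_04}: the continuity at the junction $t = rn/2$ of the adjacent intervals, which feeds the upper blocks of \eqref{eq503}; and the fact that on $[rn/2, r(n+1)/2)$ the Hamiltonian is the constant matrix $H_{n+1}$ with $\det H_{n+1} = 1$, which feeds the lower blocks. Accordingly I would treat the equation \eqref{eq503} block-row by block-row.

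For the first two block rows (the ``continuity block''), I would apply Theorem \ref{thm_04}(1) at $t = rn/2$, so that the left-sided value from \eqref{eq113} with $n$ replaced by $n$ must equal the right-sided value from \eqref{eq113} with $n$ replaced by $n+1$. Writing both expansions as sums of the $d-n+1$ distinct exponentials $e^{\pm i(L-r(i-1)-rn/2)z}$ and matching coefficients yields equations of the form $a_{n+1}(L-r(i-1)) + \overline{a_{n+1}(L-r(d-n-i+1))} = a_n(L-r(i-1)) + \overline{a_n(L-r(d-n-i+1))}$ for $i = 1, \dots, d-n+1$, with the convention that $a_{n+1}(L-r(d-n))=0$. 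These equations, and the corresponding ones for $b_n, b_{n+1}$, are exactly what blocks $1$--$2$ of $\mathfrak{P}_{d-n-1}(H_{n+1})$ and $\mathfrak{Q}_{d-n-1}$ implement: the shift embeddings $I^{\pm}_{k+2,k+1}$ encode the $0$-padding corresponding to the ``missing'' highest/lowest frequency in $A_{n+1}$, while $J_{k+1}$ and $J_{k+2}$ implement the conjugation-reversal that pairs the contribution of $a(L-rj)$ at frequency $L-rj-rn/2$ with the contribution of $\overline{a(L-r(d-n-j))}$ at the same frequency.

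For the bottom two block rows (the ``eigenvector block''), I would substitute \eqref{eq113} (with $n$ replaced by $n+1$) into the canonical system \eqref{eq103} on $(rn/2, r(n+1)/2)$. Since $H_{n+1}$ is constant there and $\det H_{n+1} = 1$, we have $(JH_{n+1})^2 = -I$. Using the linear independence of the exponentials $e^{\pm i(L-rj-t)z}$ in $(t,z)$ and matching coefficients of each exponential, every pair ${}^{\rm t}(a_{n+1}(L-rj),\,b_{n+1}(L-rj))$ must be a $+i$-eigenvector of $JH_{n+1}$, which is the identity $(1-i\beta_{n+1})\,a_{n+1}(L-rj) + (-i\gamma_{n+1})\,b_{n+1}(L-rj) = 0$ (and its conjugate for the pair $(\overline{a_{n+1}},\overline{b_{n+1}})$, which by $\det H_{n+1}=1$ is equivalent to $(-i\alpha_{n+1})\overline{a_{n+1}(L-rj)} + (1-i\beta_{n+1})\overline{b_{n+1}(L-rj)} = 0$). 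Selecting these equations at the $k=d-n-1$ indices picked out by the truncation matrices $\mathbf{0}I_k$ (last $k$ entries) and $I_k\mathbf{0}$ (first $k$ entries) recovers exactly the block-$3$ and block-$4$ rows of $\mathfrak{P}_{d-n-1}(H_{n+1})\Omega_{n+1}$, with zero right-hand side as dictated by the vanishing bottom block of $\mathfrak{Q}_{d-n-1}$.

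The base case $n=0$ needs a short separate check, because $\Omega_0$ is defined directly from $\mathcal{C}$ rather than through $(a_0,b_0)$: I would verify it by comparing \eqref{eq107} with the $n=0$ instance of \eqref{eq113} at $t=0$, which identifies the entries of $A_0^{**}, B_0^{**}$ with the relevant combinations of the $C_{L-rj}$. The main obstacle in the whole argument is bookkeeping: keeping careful track of the indexing convention for $a_n(k), b_n(k)$, confirming that the shift operators $I^{\pm}_{k+2,k+1}$ and the truncations $\mathbf{0}I_k, I_k\mathbf{0}$ act on the entries of $\Omega_{n+1}$ exactly as the frequency-matching and eigenvector equations demand, and checking consistency of the eigenvector condition at the ``boundary'' index $j=0$ (which is not directly imposed by blocks $3$--$4$) with the continuity equations of blocks $1$--$2$, using that $(a_n,b_n)$ already satisfies its own eigenvector condition for $H_n$.
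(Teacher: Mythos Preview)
Your proposal is correct and follows essentially the same strategy as the paper's proof: the upper blocks of \eqref{eq503} come from matching coefficients in the continuity condition $A_n(rn/2,z)=A_{n+1}(rn/2,z)$, $B_n(rn/2,z)=B_{n+1}(rn/2,z)$ (the paper's \eqref{eq504}--\eqref{eq505}), and the lower blocks come from the coefficient relations $(1-i\beta_{n+1})a_{n+1}(k)-i\gamma_{n+1}b_{n+1}(k)=0$ and its conjugate, which the paper records as \eqref{eq507} by substituting \eqref{eq113} into the system \eqref{eq506}. Your phrasing in terms of $+i$-eigenvectors of $JH_{n+1}$ is just a repackaging of \eqref{eq507}, and your comment that the base case $n=0$ requires a direct identification of $\Omega_0$ with the coefficients of $E_\mathcal{C}$ is also how the paper sets things up via \eqref{eq501}.
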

\begin{proof} 
By Lemma \ref{lem308} and \eqref{eq307}, we have
\[
(\mathsf{I} \pm \mathsf{J}){\mathsf E} (\phi_{n}^{\pm}+X(0))
= \sum_{k=-(d+r-1)/2+n}^{(d-r+1)/2}\Bigl( 
r_n(k)X(k) \pm \overline{r_n(k)}Y(k) \Bigr). 
\]
Therefore, 
\[
\aligned 
A_n(t,z)
= \sum_{k=-(d+r-1)/2+n}^{(d-r+1)/2}\Bigl( 
a_n(k)X(k) + \overline{a_n(k)}Y(k) \Bigr), \\
B_n(t,z)
= \sum_{k=-(d+r-1)/2+n}^{(d-r+1)/2}\Bigl( 
b_n(k)X(k) + \overline{b_n(k)}Y(k) \Bigr)
\endaligned 
\]
by \eqref{eq209} and \eqref{eq210}. 
Evaluating these for $n$ and $n+1$ at $t=rn/2$ noting $Y(k)=X(n-k-r+1)$, 
\begin{equation} \label{eq504}
\aligned
A_n(rn/2,z) 
& = 
\sum_{k=-(d+r-1)/2+n}^{(d-r+1)/2} 
\Bigl[
a_n(k) + 
\overline{a_n(n-k-r+1)}
\Bigr]  X(k), \\
B_n(rn/2,z)
& = \sum_{k=-(d+r-1)/2+n}^{(d-r+1)/2} 
\Bigl[
b_n(k) + 
\overline{b_n(n-k-r+1)}
\Bigr]  X(k)
\endaligned
\end{equation}
and 
\begin{equation} \label{eq505}
\aligned
A_{n+1}(rn/2,z)
& = a_{n+1}((d-r+1)/2)\, X((d-r+1)/2) \\
& \quad + \sum_{k=-(d+r-1)/2+n+1}^{(d-r+1)/2-1}
\Bigl[
a_{n+1}(k) 
+ 
\overline{a_{n+1}(n-k-r+1)}) 
\Bigr] \, X(k) \\
& \quad + \overline{a_{n+1}((d-r+1)/2)}\, X(-(d+r-1)/2+n), \\
B_{n+1}(rn/2,z)
& = b_{n+1}((d-r+1)/2)\,X((d-r+1)/2) \\
& \quad + \sum_{k=-(d+r-1)/2+n+1}^{(d-r+1)/2-1}
\Bigl[
b_{n+1}(k) 
+ 
\overline{b_{n+1}(n-k-r+1)}) 
\Bigr] \, X(k) \\
& \quad + \overline{b_{n+1}((d-r+1)/2)}\, X(-(d+r-1)/2+n). 
\endaligned
\end{equation}

On the other hand, by Proposition \ref{prop309},  
\begin{equation} \label{eq506}
\aligned 
\frac{1}{z}\frac{d}{dt}A_n(t,z) 
&= \beta_n A_n(t,z) + \gamma_n B_n(t,z) , \\
-\frac{1}{z}\frac{d}{dt}B_n(t,z) 
&= \alpha_n A_n(t,z) + \beta_n B_n(t,z), 
\endaligned 
\end{equation}
where $H_n
=\begin{bmatrix}
\alpha_n & \beta_n \\ \beta_n & \gamma_n
\end{bmatrix}$. 
Because 
$\displaystyle{
\frac{d}{dt}X(k) = -iz X(k)
}$
and 
$\displaystyle{
\frac{d}{dt}Y(k) = iz Y(k), 
}$ 
the left-hand sides are 
\[
\aligned 
\frac{1}{z}\frac{d}{dt}A_n(t,z)
&= \sum_{k=-(d+r-1)/2+n}^{(d-r+1)/2}\Bigl( 
-ia_n(k)X(k) + i\overline{a_n(k)}Y(k) \Bigr), \\
\endaligned 
\]
\[
\aligned 
\frac{1}{z}\frac{d}{dt}B_n(t,z)
&= \sum_{k=-(d+r-1)/2+n}^{(d-r+1)/2}\Bigl( 
-ib_n(k)X(k) + i\overline{b_n(k)}Y(k) \Bigr). \\
\endaligned 
\]
Therefore, by comparing both sides of \eqref{eq506}, we obtain 
\begin{equation} \label{eq507}
(1- i\beta_n)a_n(k) -  i\gamma_n b_n(k) =0, \quad  
(1+ i\beta_n)b_n(k) + i\alpha_n a_n(k) =0 
\end{equation}
for $-(d+r-1)/2+n \leq k \leq (d-r+1)/2$. 

For $4(d-n)$ complex numbers 
$\{a_{n+1}(k), \overline{a_{n+1}(k)}, b_{n+1}(k), \overline{b_{n+1}(k)} \}_{k=-(d+r-1)/2 + n+1}^{(d-r+1)/2}$, 
we obtain $2(d-n+1)$ linear equations  
by comparing coefficient of $X(k)$ for $-(d+r-1)/2+n \leq k \leq (d-r+1)/2$ 
in equalities $A_{n+1}(rn/2,z)=A_n(rn/2,z)$ and $B_{n+1}(rn/2,z)=B_n(rn/2,z)$ 
by using  \eqref{eq504} and \eqref{eq505}. 
In addition, we obtain $2(d-n-1)$ linear equations 
from differential equations \eqref{eq506} 
by using \eqref{eq507} for $-(d+r-1)/2+n+1 \leq k \leq (d-r+1)/2-1$. 
In total, we obtain $4(d-n)$ linear equations, which is expressed in the form of \eqref{eq503}. 
\end{proof}

The pair of functions $(A(t,z),B(t,z))$ of \eqref{eq210} is written as 
\[
\aligned
A(t,z) &=  \frac{1}{2} \alpha_n^{\ast\ast} \cdot 
\begin{bmatrix} F(t,z) & F(t,z)\end{bmatrix} \cdot (I+\mathfrak{J})\mathfrak{E}_0
(\mathfrak{E}_0 + \mathfrak{E}_n^\sharp)^{-1}\mathfrak{E}_0 \chi \\
& \quad + 
\frac{i}{2} \beta_n^{\ast\ast}  \cdot 
\begin{bmatrix} F(t,z) & -F(t,z)\end{bmatrix} \cdot (I-\mathfrak{J})\mathfrak{E}_0
(\mathfrak{E}_0 - \mathfrak{E}_n^\sharp)^{-1}\mathfrak{E}_0 \chi \\
B(t,z) &= 
\frac{1}{2} \gamma_n^{\ast\ast} \cdot 
\begin{bmatrix} F(t,z) & F(t,z)\end{bmatrix} \cdot (I+\mathfrak{J})\mathfrak{E}_0
(\mathfrak{E}_0 + \mathfrak{E}_n^\sharp)^{-1}\mathfrak{E}_0 \chi \\
& \quad + 
\frac{i}{2} \delta_n^{\ast\ast}  \cdot 
\begin{bmatrix} F(t,z) & -F(t,z)\end{bmatrix} \cdot (I-\mathfrak{J})\mathfrak{E}_0
(\mathfrak{E}_0 - \mathfrak{E}_n^\sharp)^{-1}\mathfrak{E}_0 \chi
\endaligned
\]
for $r(n-1)/2 \leq t < rn/2$ by \eqref{eq209}, \eqref{eq304}, \eqref{eq311}, and \eqref{eq312}. 
These formulas are explicit but it involves the complexity of calculating 
$P_n=
\begin{bmatrix} 
\alpha_n^{\ast\ast} & \beta_n^{\ast\ast} \\ 
\gamma_n^{\ast\ast} & \delta_n^{\ast\ast}
\end{bmatrix}$. 
In contrast, the following method, based on Proposition \ref{prop501}, 
is often useful for computing the triple $(H(t), A(t,z),B(t,z))$. 

\begin{theorem} \label{thm_05} 
Let $\widetilde{\Omega}_{0}$ be a column vector of length $4(d+1)$. 
Define column vectors $\widetilde{\Omega}_{n}$ $(1 \leq n \leq d)$ 
of length $4(d-n+1)$ inductively as follows:
\begin{equation}\label{def_m0}
\aligned 
\widetilde{a}_{n+1}:& =\widetilde{\Omega}_{n}(1) + \widetilde{\Omega}_{n}(d-n+2), \\ 
\widetilde{b}_{n+1}:& =\widetilde{\Omega}_{n}(2(d-n+1)+1) + \widetilde{\Omega}_{n}(3(d-n+1)+1), \\
\widetilde{\alpha}_{n+1} &:= \frac{|\widetilde{b}_{n+1}|^2}{\Re(\,\widetilde{a}_{n+1}\overline{(i\widetilde{b}_{n+1})}\,)}, \quad 
\widetilde{\beta}_{n+1} := \frac{\Im(\,\widetilde{a}_{n+1}\overline{(i\widetilde{b}_{n+1})}\,)}
{\Re(\widetilde{a}_{n+1}\overline{(i\widetilde{b}_{n+1})}\,)}, \\ 
\widetilde{\gamma}_{n+1} &:= \frac{|\widetilde{a}_{n+1}|^2}
{\Re(\,\widetilde{a}_{n+1}\overline{(i\widetilde{b}_{n+1})}\,)}, 
\endaligned 
\end{equation}
\begin{equation}\label{eq508}
\widetilde{H}_{n+1}
:= \begin{bmatrix} 
\widetilde{\alpha}_{n+1} & \widetilde{\beta}_{n+1} \\
\widetilde{\beta}_{n+1} & \widetilde{\gamma}_{n+1}
\end{bmatrix}, 
\end{equation}
\begin{equation}\label{eq509}
\widetilde{\Omega}_{n+1} := (\mathfrak{P}_{d-(n+1)}(\widetilde{H}_{n+1}))^{-1}
\mathfrak{Q}_{d-(n+1)} \, \widetilde{\Omega}_{n},
\end{equation}
where $\mathfrak{P}_0(\tilde{H}_0):=\mathfrak{P}_0$ 
and $v(j)$ means the $j$-th component of a column vector $v$. 

Suppose that $\widetilde{\Omega}_{0}$ is the vector 
defined by \eqref{eq501} and \eqref{eq502} 
for a vector $\mathcal{C} \in \C^{d+1}$ as in \eqref{eq105} such that 
$D_d(\mathcal{C})\not=0$. 
Then $\widetilde{H}_n$ and $\widetilde{\Omega}_{n}$ are well-defined as functions of $\mathcal{C}$ 
for every $1 \leq n \leq d$, and  
\[
H_n=\widetilde{H}_n ,\quad \Omega_n=\widetilde{\Omega}_n,
\]
where $H_n$ and $\Omega_n$ are defined in \eqref{eq211} and \eqref{eq502}, respectively. 
\end{theorem}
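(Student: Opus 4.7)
The plan is induction on $n$. The base case $n=0$ is immediate from the hypothesis $\widetilde{\Omega}_0 = \Omega_0$ (both given by \eqref{eq501} and \eqref{eq502}). For the inductive step, assume $\widetilde{\Omega}_n = \Omega_n$; I will show first that $\widetilde{H}_{n+1} = H_{n+1}$, and then that $\widetilde{\Omega}_{n+1} = \Omega_{n+1}$.

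First, I extract the leading coefficients $a_{n+1}((d-r+1)/2)$ and $b_{n+1}((d-r+1)/2)$ from the linear relation \eqref{eq503}. The first row of the top block of $\mathfrak{P}_{d-n-1}(H_{n+1})$ uses only $I_{k+2,k+1}^\pm$ and is independent of $H_{n+1}$, while the first row of the top block of $\mathfrak{Q}_{d-n-1}$ reads $[1,0,\ldots,0\,|\,1,0,\ldots,0\,|\,0\,|\,0]$. Reading off the first row of \eqref{eq503} and using the induction hypothesis gives
\begin{equation*}
a_{n+1}(\tfrac{d-r+1}{2}) = \Omega_n(1) + \Omega_n(d-n+2) = \widetilde{a}_{n+1},
\end{equation*}
and similarly $b_{n+1}((d-r+1)/2) = \widetilde{b}_{n+1}$.

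Second, the relations \eqref{eq507} applied at $k=(d-r+1)/2$ give two complex linear equations
\begin{equation*}
(1-i\beta_{n+1})\widetilde{a}_{n+1} - i\gamma_{n+1}\widetilde{b}_{n+1} = 0, \qquad i\alpha_{n+1}\widetilde{a}_{n+1} + (1+i\beta_{n+1})\widetilde{b}_{n+1} = 0,
\end{equation*}
in the three real unknowns $\alpha_{n+1}, \beta_{n+1}, \gamma_{n+1}$. Imposing reality by equating each equation with its conjugate and solving directly produces
\begin{equation*}
\beta_{n+1} = -\frac{\Re(\widetilde{a}_{n+1}\overline{\widetilde{b}_{n+1}})}{\Im(\widetilde{a}_{n+1}\overline{\widetilde{b}_{n+1}})}, \quad \alpha_{n+1} = \frac{|\widetilde{b}_{n+1}|^2}{\Im(\widetilde{a}_{n+1}\overline{\widetilde{b}_{n+1}})}, \quad \gamma_{n+1} = \frac{|\widetilde{a}_{n+1}|^2}{\Im(\widetilde{a}_{n+1}\overline{\widetilde{b}_{n+1}})},
\end{equation*}
which matches \eqref{def_m0} via the identities $\Re(\widetilde{a}\overline{(i\widetilde{b})}) = \Im(\widetilde{a}\overline{\widetilde{b}})$ and $\Im(\widetilde{a}\overline{(i\widetilde{b})}) = -\Re(\widetilde{a}\overline{\widetilde{b}})$. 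Well-definedness of the denominator follows because Lemma \ref{lem306} gives $r_{n+1}((d-r+1)/2) = 2C_L$, whence a short computation using the definitions of $a_{n+1}, b_{n+1}$ yields $\Im(\widetilde{a}_{n+1}\overline{\widetilde{b}_{n+1}}) = -4|C_L|^2 \det P_{n+1}$, which is nonzero by Proposition \ref{prop310} and the standing hypothesis $C_L \neq 0$.

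Third, having established $\widetilde{H}_{n+1} = H_{n+1}$, the matrix $\mathfrak{P}_{d-n-1}(H_{n+1})$ is invertible: its determinant equals $(2(1-i\beta_{n+1}))^{d-n-1} \neq 0$, using $\det H_{n+1} = 1$ from Proposition \ref{prop310} and the realness of $\beta_{n+1}$. The recursion \eqref{eq509} is then exactly the unique solution of \eqref{eq503} for $\Omega_{n+1}$, so $\widetilde{\Omega}_{n+1} = \Omega_{n+1}$. The main obstacle is the block-by-block bookkeeping of the first step: carefully identifying the first row of \eqref{eq503} with the combination $\Omega_n(1) + \Omega_n(d-n+2)$ appearing in \eqref{def_m0}, and verifying that \eqref{eq507} at the extremal index $k=(d-r+1)/2$ is indeed the content of the rows of $\mathfrak{P}_{d-n-1}(H_{n+1})$ that involve $H_{n+1}$. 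Once this identification is done, the remaining verifications are routine algebraic manipulations already implicit in Propositions \ref{prop309}, \ref{prop310}, and \ref{prop501}.
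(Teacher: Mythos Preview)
Your proof is correct and follows essentially the same strategy as the paper: both solve \eqref{eq507} to express $\alpha_{n+1},\beta_{n+1},\gamma_{n+1}$ in terms of the leading coefficients $a_{n+1},b_{n+1}$, then invoke the recursion \eqref{eq503} together with the invertibility of $\mathfrak{P}_k$ to force $\widetilde{\Omega}_{n+1}=\Omega_{n+1}$. Your treatment of well-definedness is in fact more direct than the paper's: where the paper appeals to Theorem~\ref{thm_01} and a cyclotomic-polynomial example to argue $H_n\not\equiv 0$, you compute the denominator explicitly as $\Im(\widetilde{a}_{n+1}\overline{\widetilde{b}_{n+1}})=-4|C_L|^2\det P_{n+1}$ and invoke Proposition~\ref{prop310}, which is cleaner. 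One minor slip in your final paragraph: the rows of $\mathfrak{P}_{d-n-1}(H_{n+1})$ that involve $H_{n+1}$ actually encode \eqref{eq507} at the \emph{interior} indices, not the extremal one; but this does not affect your argument, since you correctly apply \eqref{eq507} at $k=(d-r+1)/2$ as an independent relation (valid by the proof of Proposition~\ref{prop501}) rather than as a row of the matrix equation.
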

\begin{proof}
Solving \eqref{eq507} for fixed $k$, 
\[
\aligned 
\alpha_n &= \frac{|b_n(k)|^2}{\Re(\,a_n(k)\overline{(ib_n(k))}\,)}, \quad 
\beta_n = \frac{\Im(\,a_n(k)\overline{(ib_n(k))}\,)}{\Re(\,a_n(k)\overline{(ib_n(k))}\,)}, \quad 
\gamma_n = \frac{|a_n(k)|^2}{\Re(\,a_n(k)\overline{(ib_n(k))}\,)}. 
\endaligned 
\]
Therefore, $H_n$ and $\Omega_{n}$ of \eqref{eq211} and \eqref{eq502} 
satisfy \eqref{eq508} and \eqref{eq509} by the definitions of 
$\mathfrak{P}_k(H_k)$, $\mathfrak{Q}_k$, and \eqref{eq503}. 
Therefore, $H_n \not \equiv 0$ as a function of $\mathcal{C}$ for every $1 \leq n \leq d$ 
by Theorem \ref{thm_01}, 
since all roots of the derivative of the cyclotomic polynomial of degree $d+1$ 
lie inside the unit circle. 
Hence, the invertibility of  $\mathfrak{P}_k(H_k)$ implies that 
$\widetilde{\Omega}_{1},\widetilde{\Omega}_{2}, \cdots, \widetilde{\Omega}_{d}$ 
and 
$\widetilde{H}_{1},\widetilde{H}_{2}, \cdots, \widetilde{H}_{d}$ 
are uniquely determined from the initial vector $\widetilde{\Omega}_{0}$. 
Therefore, $\Omega_{n}=\widetilde{\Omega}_{n}$ and $H_{n}=\widetilde{H}_{n}$ 
for every $1 \leq n \leq d$ if  $\tilde{\Omega}_{0}=\Omega_{0}$. 
\end{proof}

By definition of the matrices $\mathfrak{P}_k(H_k)$,  
in \eqref{eq503}, $\Omega_{n+1}(1)$, $\Omega_{n+1}(2(d-n))$, 
$\Omega_{n+1}(2(d-n)+1)$,  and $\Omega_{n+1}(4(d-n))$
are determined from $\Omega_n$ independent of $H_{n+1}$.
Hence, we can define $\Omega_{n}$ by 
taking 
\[
\Omega_{n}^\prime=\mathfrak{P}_{d-n}(H_{n})^{-1}\mathfrak{Q}_{d-n} \, \Omega_{n-1}
\] 
for $\Omega_{n-1}$ 
and then substituting $H_n=\begin{bmatrix} 
\alpha_{n} & \beta_{n} \\
\beta_{n} & \gamma_{n}
\end{bmatrix}$ 
defined by 
\[
\aligned 
\alpha_n &= \frac{|\Omega_{n+1}^\prime(2(d-n)+1)|^2}
{\Re(\,\Omega_{n+1}^\prime(1)\overline{(i\Omega_{n+1}^\prime(2(d-n)+1))}\,)}, \quad 
\beta_n = \frac{\Im(\,\Omega_{n+1}^\prime(1)\overline{(i\Omega_{n+1}^\prime(2(d-n)+1))}\,)}
{\Re(\,\Omega_{n+1}^\prime(1)\overline{(i\Omega_{n+1}^\prime(2(d-n)+1))}\,)}, \\
\gamma_n &= \frac{|\Omega_{n+1}^\prime(1)|^2}{\Re(\,\Omega_{n+1}^\prime(1)\overline{(i\Omega_{n+1}^\prime(2(d-n)+1))}\,)}. 
\endaligned 
\]
 into $H_n$ of $\Omega_{n}^{\prime}$. 
In this way we can inductively obtain vectors $\Omega_1,\dots, \Omega_n$ 
and quadratic real symmetric matrices $H_1,\dots, H_d$ starting with 
the initial vector $\Omega_0$.

%


\bigskip \noindent
\\
Department of Mathematics, 
School of Science, \\
Tokyo Institute of Technology \\
2-12-1 Ookayama, Meguro-ku, 
Tokyo 152-8551, JAPAN  \\
Email: {\tt msuzuki@math.titech.ac.jp}

\end{document}